\documentclass[11pt]{amsart}
\usepackage[margin=1.5in]{geometry}

\usepackage{mathrsfs}
\usepackage{amsfonts}
\usepackage{latexsym,epsfig}
\usepackage{mathabx}
\usepackage{amsmath, amscd, amsthm,amssymb}
\usepackage[pdftex]{color}
\usepackage{comment}
\usepackage{marginnote}
\usepackage[colorlinks,citecolor=blue,linkcolor=red]{hyperref}
\usepackage[nameinlink]{cleveref}
\usepackage{enumitem}
\usepackage{enumitem}

\newcommand{\RR}[0]{\mathbb{R}}
\newcommand{\HH}[0]{\mathbb{H}}

\newcommand{\Z}{\mathbb{Z}}

\newcommand{\A}[0]{\mathcal{A}}

\newcommand{\CC}[0]{\mathcal{C}}

\newcommand{\R}{\mathbb{R}}

\newcommand{\diam}{\mathrm{diam}}

\newcommand{\wt}{\widetilde}
\newcommand{\mc}{\mathcal}
\newcommand{\mf}{\mathfrak}
\newcommand{\cone}{\mathrm{cone}}

\newcommand{\mr}{\mathring}

\newcommand{\WF}{\mathrm{WF}}
\newcommand{\hm}{\text{\tiny{HM}}}
\DeclareMathOperator{\fr}{fr}

\newcommand{\cut}{\backslash \!\! \backslash}

\usepackage{hyperref}
\usepackage{pinlabel}

\newtheorem{thm}{Theorem}

\newtheorem{theorem}{Theorem}[section]
\newtheorem{lemma}[theorem]{Lemma}
\newtheorem{proposition}[theorem]{Proposition}
\newtheorem{corollary}[theorem]{Corollary}

\newtheorem{claim}{Claim}

\theoremstyle{definition}

\newtheorem{remark}[theorem]{Remark}
\newtheorem{convention}[theorem]{Convention}

\newcommand{\FF}{\mathcal{F}}

\newcommand{\C}{\mathcal{C}}

\newcommand{\Mod}{\mathrm{Mod}}

\newcommand{\orb}{\mathcal{O}}

\newcommand\tsim{\kern-.4em\sim}

\newcommand\ssm{\smallsetminus}

\newcommand{\vol}{\mathrm{vol}}

\renewcommand{\phi}{\varphi}
\renewcommand{\epsilon}{\varepsilon}

\newcommand{\TTT}{\mathbb{T}}
\newcommand{\cU}{\mathcal{U}}

\DeclareMathOperator{\intr}{int}

\title{Pseudo-Anosov flows, hyperbolic geometry, and the curve graph}
\author{Junzhi Huang and Samuel J. Taylor}

\begin{document}

\begin{abstract}
Starting with a pseudo-Anosov flow $\varphi$ on a closed hyperbolic $3$--manifold $M$ and an embedded surface $S \subset M$ that is (almost) transverse to $\varphi$, we relate the hyperbolic geometry of $M$ (e.g. volume, circumference, short geodesics) to dynamical invariants of $\varphi$ encoded by the curve graph of $S$. 
\end{abstract}

\maketitle

\setcounter{tocdepth}{1}
\tableofcontents

\section{Introduction}
\label{sec:intro}
Let $M$ be a closed hyperbolic $3$--manifold and let $\varphi$ be a pseudo-Anosov flow on $M$. The purpose of this paper is to quantify how dynamical properties of $\varphi$ influence the geometry of $M$ through the curve graph of a closed embedded surface $S\subset M$ that is (almost) transverse to $\varphi$.

\smallskip
The prototypical example is where $S$ is a cross section of $\varphi$ (i.e. $S$ is transverse to $\varphi$ and intersects every orbit), and hence $M$ fibers over the circle with $S$ as a fiber. In this setting, the restriction of the  stable/unstable foliations of $\varphi$ to $S$ are precisely the end invariants of the cover $\wt M_S$ of $\wt M$ associated to $S$. Hence, by the resolution of Thurston's Ending Lamination Conjecture, proven by Minsky \cite{ECL1} and Brock--Canary--Minsky \cite{ELC2}, the hyperbolic structure on $\wt M_S$ is totally determined by these dynamical/geometric invariants. In fact, geometric quantities like the volume of $M$ \cite{Brock2}, the circumference of $M$ \cite{biringer2019ranks, aougab2022covers}, and the location of short geodesics \cite{minsky2001bounded, viaggi2025effective} are each coarsely determined (up to constants depending only on $\chi(S)$) by combinatorial quantities that are organized by the curve graph of $S$ and its subsurfaces.  

Despite the significance of these results, very little is known in the more general setting where the surface $S$ is transverse to $\varphi$ but not a cross section (that is, it misses some orbits). 
Here, the hyperbolic manifold $\wt M_S$ is quasi-Fuchsian (\cite{cooper1994bundles,fenley1999surfaces}) and there is no obvious connection between the stable/unstable foliations of $\varphi$ and the end invariants of $\wt M_S$.

\subsection*{Results}

Let $M$ be a closed hyperbolic $3$--manifold, let $\varphi$ be an almost pseudo-Anosov flow on $M$, and let $S$ be a connected surface embedded in $M$ that is transverse to $\varphi$.
 The intersections of $S$ with the invariant stable/unstable foliations $\mc F^{s/u}$ of $\varphi$ are a pair of singular foliations $\mc F_S^{s/u}$ of $S$.  When $S$ is not a fiber surface (or more precisely, not a cross section of $\varphi$), the foliations $\mc F_S^{s/u}$ each contain a nonempty set of closed leaves $c^{s/u}$ (see \Cref{sec:transvese_shadow}).  
The pair $c^{s/u}$ are essential multicurves on $S$,
which we call the \emph{stable/unstable multicurves} of $S$. 

Informally, our main theorems state that if $c^s$ and $c^u$ are complicated with respect to one another, as viewed in the curve graph of $S$, then the geometry of $M$ near $S$ is also complicated. To state this precisely, we first introduce a notion of complexity of the cut manifold $M\cut S$. This generalizes the notion of `core complexity' originally defined when $S$ is a compact leaf of a depth one foliation of $M$ (see \cite{field2025lower}, \cite{whitfield2024short}, as well as \Cref{sec:examples}). 
\smallskip

Recall that $M\cut S$ denotes the manifold obtained by cutting $M$ along $S$. Its boundary is a disjoint union $\partial_- (M \cut S) \sqcup \partial_+ (M \cut S)$, where the flow points into, or out of, $M\cut S$ respectively. A \emph{product annulus} in $M \cut S$ is an essential properly embedded annulus that has one boundary component on each of $\partial_-(M\cut S)$ and $\partial_+(M\cut S)$.
Define the \emph{core complexity} of $M\cut S$ to be 
\begin{align*}
\mathfrak{c}(M\cut S) = 
\begin{cases}
0, & M\cut S \text{ contains a product annulus},\\
\min{-\chi(\Sigma)}, & \text{otherwise},
\end{cases}
\end{align*}
where the min is over all properly embedded surfaces $\Sigma$ in $M\cut S$ that are transverse to (a dynamic blowup of) $\varphi$ and which meet both $\partial_+ (M \cut S)$ and $\partial_- (M\cut S)$. Dynamic blowups and almost pseudo-Anosov flows are defined in \Cref{sec:pA}.
For context, $\mathfrak{c}(M\cut S)$ contributes to an \emph{additive} error term in our theorems that relate geometry and dynamics. We note that $\mathfrak{c}(M\cut S)< \infty$ when $\varphi$ is a suspension flow (i.e. $\varphi$ admits a cross section) or more generally when $S$ is a leaf of a (almost) transverse finite depth foliation (see \Cref{sec:examples}).
A cohomological characterization of when $\mathfrak{c}(M\cut S)< \infty$ will be  subject of forthcoming work \cite{HT_depthone}.

\smallskip

Our first main theorem relates the curve graph distance between $c^u$ and $c^s$ to the volume and circumference of $M$. See \Cref{th:vol} and \Cref{th:circum}, respectively.

\begin{thm}[Volume and circumference]\label{thm:vol-circumference}
Let $M$ be a closed hyperbolic $3$-manifold with a pseudo-Anosov flow $\varphi$. Let $S$ be a closed connected surface in $M$ that is almost transverse to $\varphi$, but is not a fiber. Let $c^{s/u}$ be the stable/unstable multicurves of $S$. Assume that $\mathfrak{c}(M\cut S)<\infty$.

There is a constant $k_S = k_S(\chi(S)) \ge 1$, depending only on $\chi(S)$, and a constant $k_{M \cut S} \ge 1$, depending only on $\chi(S)$ and $\mf c(M\cut S)$, so that the following inequalities hold:
\begin{enumerate}
\item $d_{\C(S)}(c^s, c^u) \le k_S \cdot \vol(M) + k_{M\cut S}$, and 
\smallskip
\item $d_{\C(S)}(c^s, c^u) \le k_S \cdot \ell_M(\gamma) + k_{M \cut S}$, where $\gamma$ is any closed geodesic that intersects $S$ essentially and $\ell_M(\gamma)$ is its length in $M$.
\end{enumerate}
\end{thm}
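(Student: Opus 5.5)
The plan is to pass through the quasi-Fuchsian cover $\wt M_S$ and show that $c^s$ and $c^u$ can be joined in $\C(S)$ by a path whose length is controlled by the hyperbolic geometry of $M$ near $S$. The key mechanism is that each closed leaf of $c^u$, flowed forward, and each closed leaf of $c^s$, flowed backward, sweeps out an annulus that must eventually return to $S$. These annuli record how the flow moves essential curves of $S$ around $M$.

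First, working in $M\cut S$ with the hypothesis $\mf c(M\cut S)<\infty$, I would fix a transverse surface $\Sigma$ realizing the core complexity, meeting both $\partial_\pm(M\cut S)$. Flowing a component of $c^u$ in $\partial_+$ forward, the leaf either returns to $\partial_-$ (a product annulus) or is trapped in a region whose complexity is bounded by $\mf c$. In either case I expect $\Sigma\cap\partial_\pm$ to give curves on $S$ whose curve graph distance to $c^u$ and to $c^s$ is bounded in terms of $\chi(S)$ and $\mf c(M\cut S)$, because their intersection numbers are bounded by $|\chi(\Sigma)|$. This produces the additive constant $k_{M\cut S}$.

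Second, I would relate the remaining distance to geometry. The natural tool is a pleated or simplicial hyperbolic surface interpolation in $\wt M_S$ (or directly in $M$), as in Brock's volume bounds. Using the flow, I would build a sequence of surfaces in $M$ homotopic to $S$: the images of $S$ under the flow through successive returns, connected by transverse annuli. Consecutive surfaces differ by a bounded amount in $\C(S)$, and the sweep from $c^u$ to $c^s$ passes through a number of such steps. For the volume bound, I would realize these as pleated surfaces of bounded area $2\pi|\chi(S)|$. I would then use the standard fact that pleated surfaces whose realized curves are far apart in $\C(S)$ are geometrically separated, by the bounded diameter lemma together with Margulis-tube considerations. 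This separation gives disjoint regions of definite volume, so $d_{\C(S)}(c^s,c^u)$ is at most a multiple of $\vol(M)$, with care needed because $M$ is compact and the covering $\wt M_S\to M$ is not injective. I would instead count embedded pieces of $M\cut S$ swept out between $S$ and its return.

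For the circumference, a geodesic $\gamma$ crossing $S$ essentially lifts to an arc in the cover joining the two sides. The sweep from $c^u$ to $c^s$ has to be crossed by $\gamma$ within $M\cut S$. Each unit of curve graph distance forces a definite length via bounded diameter of pleated surfaces, giving the bound $k_S\,\ell_M(\gamma)$.

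The main obstacle is the second step: converting curve graph distance between $c^s$ and $c^u$ into geometric separation. Since $S$ is not a fiber, $c^{s/u}$ are not end invariants, so ending-lamination machinery does not apply directly. I would try to show, using the transverse annuli, that $c^u$ and $c^s$ are realized as geodesics lying near $S$ on opposite sides of a region contained in $M\cut S$. The separation estimate from Minsky's a priori bounds would then apply in the quasi-Fuchsian convex core, with the $\mf c$-bounded error absorbed into the additive constant.
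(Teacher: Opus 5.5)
You are missing the step that replaces $c^s$ and $c^u$ by curves of \emph{uniformly bounded geodesic length in $M$} lying within bounded $\C(S)$-distance of them. Every tool in your second step (Brock-type counting, Minsky's a priori bounds, bounded diameter, sweepouts) only controls curves that are short in the quasi-Fuchsian cover $Q$. The curves $c^{s/u}$ themselves need not be short: in Whitfield's depth one examples the junctures are arbitrarily short, so the stable/unstable curves crossing them are arbitrarily long. Your proposed resolution, realizing $c^u,c^s$ as geodesics ``near $S$ on opposite sides of a region,'' therefore cannot work.

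Your first step only gives intersection bounds, and even those need more than you state. The spinning argument bounds intersections only with \emph{principal} components, and it requires $\Sigma$ to be boundary incompressible. You also need $\partial_-N$ opaque to get $c^s_p=c^s$. When $\mf c(M\cut S)=0$ there is no $\Sigma$ at all. The paper fills all of this in with a case split that produces short curves:
\begin{itemize}
\item If $\partial_-N$ meets the window surface, a window-frame curve is disjoint from $c^s$ (\Cref{lem:saturated-annulus}, \Cref{rmk:jsj}, \Cref{lem:unbalanced-annuli}). It has length at most $C_1(\chi(S))$ by Only Windows Break (\Cref{th:OWB}).
\item If $\partial_-N$ is opaque, a component of $\partial\Sigma$ has length bounded in terms of $\chi(S)$ and $\chi(\Sigma)$. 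This comes from a pleated-surface argument using opacity and boundary incompressibility (\Cref{prop:length-bound}).
\end{itemize}
This is \Cref{prop:bounded_length}, and it is the heart of the proof.

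Your second step also fails as described. Since $S$ is not a cross section, there is no return map on $S$: orbits in $\Lambda^\pm$ never come back. So ``images of $S$ under successive returns'' are not surfaces homotopic to $S$, and nothing ties their $\C(S)$-displacement to geometry.

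For volume, the non-injectivity of $Q\to M$ that you flag needs a genuine new ingredient. The paper bounds the pants-graph distance between Bers-short pants decompositions $P_s,P_u$ by the number of geodesics of length at most $B$ in $Q$ (Brock). It then uses \Cref{lem:bounded_hom}: only boundedly many non-homotopic curves of $S$ become homotopic in $M$, proved via DeBlois's bound on simple homotopies and JSJ theory. A packing bound in $M$ then compares this count with $\vol(M)$.

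For circumference, ``each unit of curve graph distance forces a definite length'' is not correct, because the lift of $\gamma$ to $Q$ is infinite. The paper instead counts, in $M$, the curves realized as $L_S$-short loops on $1$-Lipschitz surfaces passing through $\gamma$. There are boundedly many per unit length of $\gamma$, again by \Cref{lem:bounded_hom}. It then runs a sweepout between surfaces on which the bounded-length curves from \Cref{prop:bounded_length} are short.
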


We remark that when $M\cut S$ contains a product annulus, $\mf c(M\cut S) = 0$ and so the constants in \Cref{thm:vol-circumference} depend only on $\vert \chi(S) \vert$.

There is also a version of \Cref{thm:vol-circumference} that starts with a finite depth foliation rather than a flow, where the multicurves $c^{s/u}$ are replaced by minimal junctures of the foliation. See \Cref{sec:examples} and in particular \Cref{cor:finitedepth} for details.

\smallskip
The second main theorem relates subsurface distances between $c^u$ and $c^s$ to short geodesics in $M$. Informally, it says that if $c^{s/u}$ are complicated with respect to one another, from the perspective of some distant subsurface $Y \subset S$, then the geodesic length of $\partial Y$ in $M$ must be small. See \Cref{th:short_curves}.

\begin{thm}[Short curves]\label{thm:short-curves}
Let $M$ be a closed hyperbolic $3$-manifold with a pseudo-Anosov flow $\varphi$. Let $S$ be a closed surface in $M$ that is almost transverse to $\varphi$ and let $Y \subset S$ be a 
subsurface of $S$. Assume that $\mathfrak{c}(M\cut S)<\infty$. Then for any $\epsilon >0$, there is a $K = K(\epsilon, \chi(S)) \ge 0$, depending only on $\epsilon$ and $\chi(S)$, such that if
\begin{itemize}
\item $d_{\C(S)}(c^{s/u} ,\partial Y) \ge k_{M\cut S}$ and 
\item $d_{\C(Y)}(c^s,c^u) \ge K + 2 \cdot  k_{M \cut S}$,
\end{itemize}
then 
\[
\ell_M(\partial Y) \le \epsilon.
\]
\end{thm}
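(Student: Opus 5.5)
The plan is to pass from the flow data $c^{s/u}$ to the end invariants of the quasi-Fuchsian cover $\wt M_S$. Once that is done, the statement follows from Minsky's a priori bounds and the bounded geometry and short curve theorems coming out of the Ending Lamination Theorem.

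Write $\nu^\pm$ for the ends of $\wt M_S$. These are points of Teichmüller space; we replace each by a Bers-short marking, or a shortest pants decomposition, $\mu^\pm$. The key intermediate statement I would aim for is the following comparison. There is a constant $k_{M\cut S}$, depending only on $\chi(S)$ and $\mf c(M\cut S)$, such that $\pi_Z(c^s)$ and $\pi_Z(\mu^+)$ are $k_{M\cut S}$-close in $\C(Z)$ for every subsurface $Z$ with $\partial Z$ far from $c^s$ in $\C(S)$. The analogous statement should hold for $c^u$ and $\mu^-$. I expect this is the same ingredient that drives the proof of \Cref{thm:vol-circumference}, so I would invoke it as established in the earlier sections.

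To prove the comparison, I would lift $S$ to $\wt M_S$ and flow it forward. Flowing $S$ forward pushes it into $M\cut S$, where it meets a transverse surface $\Sigma$ realizing $\mf c(M\cut S)$, or a product annulus when $\mf c(M\cut S)=0$. The curves of $c^s$ are closed leaves of $\mc F^s_S$. By the transversality of $\Sigma$ (after a dynamic blowup), their images are carried to curves on the positive-side convex core boundary, or to pleated surfaces near the $+$ end, of length bounded in terms of $\chi(S)$ and $-\chi(\Sigma)$. Bounded-length curves on pleated surfaces facing an end are within bounded distance of that end's Bers marking in every subsurface projection. This is the standard consequence of Minsky's bounded length curves combined with Masur–Minsky Bounded Geodesic Image. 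The hypothesis $d_{\C(S)}(c^{s/u},\partial Y)\ge k_{M\cut S}$ is exactly what lets the comparison apply to $Z=Y$, since it keeps $\partial Y$ out of the region where the BGIT error accumulates.

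Given the comparison, the triangle inequality in $\C(Y)$ yields
\[d_{\C(Y)}(\mu^+,\mu^-)\ge d_{\C(Y)}(c^s,c^u)-2k_{M\cut S}\ge K.\]
By the short curve theorem of Minsky, in the form used in \cite{minsky2001bounded, ELC2}, for each $\epsilon>0$ there is a $K(\epsilon,\chi(S))$ such that large projection of the end invariants to $Y$ forces $\ell_{\wt M_S}(\partial Y)<\epsilon$. Closed surfaces have no parabolics, so this applies to every component of $\partial Y$; for annular $Y$ the twisting statement gives the same conclusion. Finally, $\wt M_S\to M$ is a covering map and hence a local isometry, so lengths can only go down: the geodesic representative of $\partial Y$ in $M$ is the image of the one in $\wt M_S$, giving $\ell_M(\partial Y)\le\epsilon$.

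The main obstacle is the comparison step, in particular obtaining uniform control across the complement of $S$ via $\Sigma$ when $S$ is not a cross section. Leaves of $c^s$ flow into $M\cut S$, and one has to show they exit near $\partial_+$ as curves of bounded length without unbounded twisting. I would try an argument that tracks these curves through the finite-depth or product structure supplied by $\Sigma$. If that proves too hard, I would fall back on a compactness argument over transverse surfaces of bounded Euler characteristic.
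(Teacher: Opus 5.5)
There is a genuine gap, and it sits exactly where you flag the main obstacle: the comparison $d_{\C(Z)}(c^s,\mu^+)\le k_{M\cut S}$ (and likewise for $c^u,\mu^-$). The earlier sections do not establish it. The volume and circumference proofs never touch the end invariants of $Q=\wt M_S$, and the introduction says explicitly that there is no known connection between $\mc F^{s/u}_S$ and those end invariants. What \Cref{prop:bounded_length} actually gives is weaker: curves $\alpha,\beta$ of length at most $L$ in $M$ with $i(c^s,\alpha),\,i(c^u,\beta)\le d$. This says nothing about where $\alpha^*,\beta^*$ sit in the convex core of $Q$. A bounded-length curve can lie deep inside the core, and then its projections to subsurfaces between it and the top of the core need not be close to those of $\mu^+$. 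So ``close to the end's marking in every projection'' does not follow from bounded length, except for curves on the convex core boundary.

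Your sketch of the comparison also fails. As you phrase it, carrying the leaves of $c^s$ to curves of bounded length on a pleated surface or on the convex core boundary would bound $\ell_M(c^s)$. The flow gives a homotopy, and pleated maps are $1$-Lipschitz and homotopic to the inclusion. But the remark after \Cref{thm:short-curves} (Whitfield's examples) shows that $c^{s/u}$ can be arbitrarily long in $M$. That is precisely why the paper only ever replaces $c^{s/u}$ by nearby short curves.

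The missing idea is that the ends are not needed at all. Minsky's theorem \cite[Theorem B]{minsky2001bounded}, as the paper uses it, applies to any two curves $\gamma,\delta$ that meet $Y$ essentially and have length at most the Bers constant $B$ in $Q$: if $d_{\C(Y)}(\gamma,\delta)\ge K(\epsilon,\chi(S))$, then $\ell_Q(\partial Y)\le\epsilon$. The paper's proof runs as follows.

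First, realize $\alpha$ and $\beta$ from \Cref{prop:bounded_length} on pleated surfaces in $Q$, and take Bers-short pants decompositions $P_s,P_u$ there. The collar lemma bounds $i(\alpha,P_s)$ and $i(\beta,P_u)$ in terms of $\chi(S)$ and $\mf c(M\cut S)$. Dropping to length $B$ is what keeps $K$ independent of $\mf c(M\cut S)$; the $\mf c$-dependence is pushed into the additive constant.

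Second, the first hypothesis is used only to guarantee that $\alpha,\beta$ and the chosen pants curves $\gamma\subset P_s$, $\delta\subset P_u$ all cut $Y$. These curves are all within bounded $\C(S)$-distance of $c^{s/u}$. Once they cut $Y$, bounded intersection numbers convert to bounded $\C(Y)$-distances via Hempel's bound.

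Third, your triangle inequality then gives $d_{\C(Y)}(\gamma,\delta)\ge K$. Minsky's theorem together with the covering $Q\to M$ finishes exactly as in your last step.
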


Here, $\ell_M(\partial Y)$ denotes the length of the geodesic representative of $\partial Y$ in $M$, and $d_{\C(Y)}(c^s,c^u)$ is the subsurface distance in the curve graph of $Y$ between $c^s$ and $c^u$. See \Cref{sec:curvegraph} for details.

\begin{remark}
Our main technical result that controls the geometry of $M$ is \Cref{prop:bounded_length}. Informally, it states that each of the multicurves $c^{s/u}$ have bounded curve graph distance from curves on $S$ that have bounded hyperbolic length in $M$. One might hope to prove the stronger statement that $c^{s/u}$ themselves have bounded length, but this is not always possible. Indeed, Whitfield \cite[Theorem A]{whitfield2024short} produces examples where the `junctures' of a depth one foliation of $M$ are arbitrarily short. (Here, the junctures are essentially boundary components of a properly embedded surface in $M \cut S$ realizing the minimum core complexity.)
Since such curves necessarily cross $c^{s/u}$, in these examples the stable/unstable multicurves can be arbitrarily long. 
\end{remark}

Finally, several examples are given in \Cref{sec:examples}, including a flexible construction of manifolds and flows that realizes the hypotheses of the main theorems. 

\subsection{Connections to the literature}
Although most of the previous work that relates hyperbolic geometry, dynamics, and the curve graph comes from the fibered manifold setting (as discussed above), there has recently been an interest in extending these connections to mapping tori of endperiodic maps of infinite-type surfaces. This is most prominent in work of Field, Kim, Kent, Leininger, and Loving \cite{field2023end, field2025lower} who establish connections to hyperbolic volume of the mapping tori, and work of Whitfield \cite{whitfield2024short}, who studies the lengths of short geodesics.

The work in this paper sheds light on a different aspect of the geometry of these mapping tori. Indeed, returning to the setting of our main theorems, in the special case where $S$ is a compact leaf of a transverse depth one foliation of $M$, each component of $M \cut S$ is the mapping torus of an especially nice endperiodic map -- the first return map to a depth one leaf of the foliation using the transverse flow. 
The works cited above study the intrinsic geometry of components of $M \cut S$, especially for the unique totally geodesic structure, when it exists. By contrast, our results here are about the geometry of the original closed manifold $M$. Informally,  \Cref{thm:vol-circumference} and \Cref{thm:short-curves} focus on the geometric features of $M$ that are forced when gluing $M \cut S$ back to obtain $M$, as measured by the curve graph complexity of the stable and unstable multicurves. For more on the implications in the depth one case, see \Cref{sec:examples}. 

\subsection{Proof strategy and outline of paper}
The proof of \Cref{thm:vol-circumference} and \Cref{thm:short-curves} is divided into three steps. In the first step, we identify multicurves on $S$ that have topological significance in $M\cut S$, and have bounded intersection number with $c^{s/u}$. When $M\cut S$ admits product annuli, these are the components of Thurston's window frame of $M\cut S$ (see \Cref{sec:geom}),
and we show that product annuli can be put into a particularly nice position with respect to the flow. This is explained in \Cref{sec:annuli}.
When $M\cut S$ does not admit a product annulus but has finite core complexity, these are the boundary components of a properly embedded transverse surface with minimal complexity. In the first case, the intersection number bounds are established in \Cref{sec:annuli}, and in the second case, this is done in \Cref{sec:int_bounds}.

In the second step, we show that these multicurves have bounded length in the hyperbolic metric on $M$. This is done in \Cref{sec:geom}.
The results in the first two steps lead to \Cref{prop:bounded_length}, which states that $c^s$ and $c^u$ have bounded intersection number with bounded length curves in $S$. Finally, we obtain geometric control of $M$ by passing to the quasi-Fuchsian cover corresponding to $S$ and using standard tools from the study of curve graphs and Kleinian surface groups, replacing $c^s$ and $c^u$ with nearby bounded length curves. This is done in \Cref{sec:endgame}.

Finally, 
examples and applications are given in \Cref{sec:examples}, including a version of \Cref{thm:vol-circumference} for finite depth foliations as well as a construction where the 
dynamical quantity $d_{\C(S)}(c^s,c^u)$ can be precisely controlled.

\subsection*{Acknowledgements}
The authors thank Michael Landry and Yair Minsky for several helpful and encouraging conversations. The authors also thank Ellis Buckminster and Brandis Whitfield for helpful comments on an earlier draft.

The completion of this paper was supported by the National Science Foundation under Grant No. DMS--2424139, while the authors were in residence at the Simons Laufer Mathematical Sciences Institute in Berkeley, California, during the Spring 2026 semester.
Taylor was also partially supported by NSF grant DMS--2503113 and the Simons Foundation, and Huang was partially supported by NSF grant DMS-2005328. 

\section{Background}
Here, we briefly review the background needed for the paper and provide the reader with references where further details can be found.

\subsection{Pseudo-Anosov flows and their orbits spaces}
\label{sec:pA}
For an excellent introduction to pseudo-Anosov flows see
the recent book by Barthelm\'e and Mann \cite{barthelme2025pseudo}, where the details and nuances of the theory can be found.
Informally, a \emph{pseudo-Anosov flow} $\phi$ on a closed manifold $M$ is a flow such that there is a pair of transverse $2$-dimensional singular foliations $\FF^s$ and $\FF^u$ in $M$ with the properties that every leaf is a union of flowlines, and every two flowlines on a leaf of $\FF^s$ (resp. $\FF^u$) are asymptotic in the forward (resp. backward) direction. The singularities are a finite collection of closed orbits, each of which is locally modeled on a pseudo-hyperbolic orbit with $2n$-prongs, alternating between contracting and repelling.

We will also consider a slightly more general class of flows, called \emph{almost pseudo-Anosov flows}. We say a flow $\phi^\sharp$ is almost pseudo-Anosov if it is obtained from a pseudo-Anosov flow $\phi$ by \emph{dynamically blowing up} a collection of singular orbits. Under this operation, a singular orbit is replaced by a complex (called a \emph{blowup complex}) consisting of a finite union of \emph{blowup annuli} glued along their boundaries. The blowup complex has a transverse section that is a finite tree whose vertices correspond to closed orbits of $\phi^\sharp$, and each blowup annulus (corresponding to an edge of the tree) 
is foliated by flowlines which converge to distinct boundary orbits of the annulus in forward and backward time. See \Cref{fig:blowup}.

\begin{figure}[h]
    \centering
    \includegraphics[width= .5\textwidth]{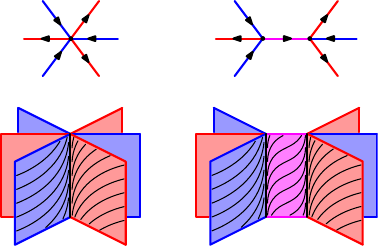}
    \caption{Figure from \cite{landry2023endperiodic} demonstrating a dynamic blowup of a 3-pronged singular orbit and its transverse cross section. Here there is a single blowup annulus.}\label{fig:blowup}
\end{figure}
 
There is also a pair of stable and unstable foliations associated to $\phi^\#$ that are transverse except along the blowup annuli, which are contained in each foliation. The two flows are related by a blowdown map $M \to M$, homotopic to the identity, that collapses each blowup complex to a single singular orbit, and semiconjugates the flow $\varphi^\sharp$ with its invariant foliations to $\varphi$ with its invariant foliations.

Almost pseudo-Anosov flows and dynamic blowups were first defined by Mosher \cite{mosher1990correction, mosher1992dynamical} and an explicit construction with many details is given by Landry--Minsky--Taylor \cite{landry2025transverse}. See these references for additional details.

\smallskip
Now let $\phi$ be an almost pseudo-Anosov flow with invariant foliations $\mc F^{s/u}$. If we lift $\phi$ to a flow $\wt{\phi}$ on the universal cover $\wt{M}$ of $M$, then the \emph{orbit space} (also called the \emph{flow space}) $\orb$ is the quotient of $\wt{M}$ by $\wt{\phi}$ with the quotient topology. It was proved by Fenley--Mosher that $\orb$ is homeomorphic to $\R^2$ \cite{fenley2001quasigeodesic} (see also \cite[Theorem 3.6]{landry2025transverse}), and we denote the projection map by $\Theta\colon \wt M \to \orb$. The singular foliations $\FF^s$ and $\FF^u$ also lift to singular foliations $\wt{\FF}^s$ and $\wt{\FF}^u$ in $\wt{M}$, and they project to transverse 1-dimensional singular foliations on $\orb$, denoted by $\orb^s$ and $\orb^u$ respectively. The action of $\pi_1(M)$ on $\wt{M}$ descends to an action of $\pi_1(M)$ on $\orb$ by homeomorphisms preserving the pair of foliations $\orb^s$ and $\orb^u$. A \emph{leaf slice} of a leaf $\ell$ in $\orb^s$ or $\orb^u$ is a properly embedded copy of $\R$ in $\ell$. When $\ell$ is periodic, a \emph{half-leaf} of $\ell$ is a complementary component of the periodic points in $\ell$. Note that a half-leaf can be either an infinite ray or a blowup segment, which is the projection of some lift of a blowup annulus. For every half-leaf $h$ of $\ell$, we also call both $\Theta^{-1}(h)$ and its image in $M$ a half-leaf of the associated foliation. In the same manner, we can also talk about leaf slices of $\wt {\mc F}^s$ and $\wt {\mc F}^u$.

\subsection{Transverse surfaces and shadows}\label{sec:transvese_shadow}

For a pseudo-Anosov flow $\phi$ and an embedded closed surface $S$ in $M$, we say $S$ is \emph{almost transverse} to $\phi$ if there is a dynamic blowup $\phi^\#$ of $\phi$ so that $S$ can be isotoped to be transverse to $\phi^\#$. We say the blowup is minimal if there is no blowup of $\phi$ transverse to $S$ up to isotopy with fewer blowup annuli. This is equivalent to the condition that $S$ intersects every blowup annulus in its core curve. If $S$ is almost transverse to $\phi$, then $S$ is incompressible \cite{mangum1998incompressible,fenley1999surfaces}, and $[S]\in H_2(M,\Z)$ is in $\cone_1^\vee(\phi)$, where $\cone_1^\vee(\phi)$ is the cone in $H_2(M, \RR)$ consisting of all the classes 
that have non-negative intersection number with any closed orbit of $\phi$. Conversely, Mosher \cite{Mos92, mosher1992dynamical} shows that any class in $\cone_1^\vee(\phi)$ is represented by an almost transverse surface, and the set of almost transverse surfaces is characterized in \cite{landry2025transverse}.

Moving forward, $\phi$ will usually denote an almost pseudo-Anosov flow that is minimally blown up to be transverse to the surface $S$. 
\smallskip

For a surface $S$ transverse to an almost pseudo-Anosov flow $\phi$ with invariant foliations $\FF^{s}$ and $\FF^{u}$, the intersections of $\FF^{s}$ and $\FF^{u}$
with $S$ give a pair of singular foliations on $S$, which we denote by $\FF_S^{s}$ and $\FF_S^{u}$, respectively. 
If we assume $M$ is hyperbolic, then by tameness \cite{agol2004tameness, calegari2006shrinkwrapping} and the covering theorem \cite{canary1996covering}, $S$ is either a virtual fiber or a quasi-Fuchsian surface. The surface $S$ is quasi-Fuchsian if and only if either one (equivalently, both) of $\FF_S^{s}$ and $\FF_S^{u}$ has a closed leaf. 
For pseudo-Anosov suspension flows this was proven by Cooper--Long--Reid  \cite{cooper1994bundles} and for general pseudo-Anosov flows by Fenley \cite{fenley1999surfaces}.
In fact, they show that $\FF_S^s$ and $\FF_S^u$ are finite foliations, i.e. foliations whose non-compact leaves all spiral into closed leaves, when $S$ is quasi-Fuchsian. The argument directly generalizes to the case where $\varphi$ is an almost pseudo-Anosov flow, and this can be seen as follows:

Suppose $S$ is transverse to the almost pseudo-Anosov flow $\phi$. In the universal cover $\wt{M}$, take any component $\wt{S}$ of the preimage of $S$. The shadow $\Omega(\wt{S})$ of $\wt{S}$ is the image of $\wt{S}$ in the orbit space $\orb$ associated to $\wt{\phi}$ under the natural projection map $\Theta \colon \wt{M}\to\orb$. The projection map is a homeomorphism from $\wt{S}$ to $\Omega(\wt{S})$, and $\pi_1(S)$ acts on $\Omega(\wt{S})$ as a covering action. The surface $S$ is quasi-Fuchsian if and only if $\Omega(\wt{S})$ is not all of $\orb$, and otherwise $S$ is a cross section of $\varphi$ (\cite[Proposition 4.5]{fenley1999surfaces}). 
Technically, this is proven only in the pseudo-Anosov case, but the argument there works in general. Indeed, the first step is to split $\mc F^s$ open along its singular leaves to produce an essential laminations transverse to $S$. The resulting lamination is the same regardless of whether $\phi$ has blowup complexes
because these are removed when splitting open the foliation.

In case where $S$ is not a cross section, it is shown in \cite[Sections 7 and 8]{landry2025transverse} that $\Omega(\wt{S})$ is bounded by leaf slices,
and each frontier leaf slice is periodic with the stabilizer in $\pi_1(S)$ isomorphic to $\Z$. If $\ell$ is a stable leaf slice in the frontier $\fr \Omega(\wt{S})$ of $\Omega(\wt S)$, a generator $
\gamma$ of $\mathrm{Stab}_{\pi_1(S)}(\ell)$ acts on $\ell$ with 
a unique fixed point $x$ having an unstable half-leaf terminating at $x$ and intersecting $\Omega(\wt{S})$. Moreover, such an unstable half-leaf is unique (\cite[Proposition 7.8]{landry2025transverse}). The intersection of this unstable half-leaf with $\Omega(\wt{S})$ is a $\gamma$--invariant line that projects to a closed component of $\FF^u_S$. 

Similarly, one can produce a closed component in $\FF^s_S$. The union $c^{s/u}$ of the closed leaves in $\FF_S^{s/u}$ are the \emph{stable/unstable multicurves} of $S$ associated to $\phi$.

\smallskip
We record an observation below that will help us identify components in $c^s$ and $c^u$. It follows from \cite{cooper1994bundles}, \cite{fenley1999surfaces} (see also \cite[Lemma 8.3]{LMT21}).

\begin{lemma}\label{lem:identify-multicurves}
    Let $M$ be an atoroidal $3$-manifold with a closed surface $S$ that is transverse to an almost pseudo-Anosov flow $\phi$. Fix a component $\wt S$ of the preimage of $S$ in $\wt M$ and let $\pi_1(S)$ denote its stablizer.

     Suppose $\alpha$ is a simple closed curve on $S$, and $\gamma\in\pi_1(S)$ is an element representing $\alpha$. If $\gamma$ fixes a stable frontier leaf of $\Omega(\wt{S})$, then $\alpha$ is homotopic to a component of $c^u$. Similarly, if $\gamma$ fixes an unstable frontier leaf of $\Omega(\wt{S})$, then $\alpha$ is homotopic to a component of $c^s$.
\end{lemma}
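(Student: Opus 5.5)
The plan is to reduce to the description of closed leaves of $\FF^u_S$ recalled above from \cite[Sections 7 and 8]{landry2025transverse}, and then to use atoroidality to pin down which frontier leaf is fixed.

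\textbf{Setup.} Suppose $\gamma \in \pi_1(S)$ represents $\alpha$ and fixes a stable frontier leaf slice $\ell$ of $\Omega(\wt S)$. Since $\alpha$ is simple and nontrivial, $\gamma$ is nontrivial. Write $\mathrm{Stab}_{\pi_1(S)}(\ell) = \langle \gamma_0 \rangle \cong \Z$. Then $\gamma = \gamma_0^k$ for some $k \neq 0$.

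\textbf{Step 1: the closed leaf for $\gamma_0$.} By the discussion preceding the lemma, $\gamma_0$ fixes a unique point $x \in \ell$. There is a unique unstable half-leaf $h$ terminating at $x$ and meeting $\Omega(\wt S)$. The intersection $h \cap \Omega(\wt S)$ is a $\gamma_0$-invariant line projecting to a closed leaf $c$ of $\FF^u_S$. Because $\Theta|_{\wt S}$ is a homeomorphism onto the shadow, the line $h \cap \Omega(\wt S)$ lifts to a line in $\wt S$ stabilized by $\gamma_0$. Hence $c$ represents the conjugacy class of $\gamma_0$ (or its inverse) in $\pi_1(S)$.

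\textbf{Step 2: show $k = \pm 1$.} This is the main point. Both $\alpha$ and $c$ are simple closed curves on $S$. The curve $c$ is essential, since $S$ is incompressible and $c$ is a component of the essential multicurve $c^u$. The class of $\alpha$ is a $k$-th power of the class of $c$. On a closed surface, a simple closed curve freely homotopic to a proper power $c^k$ with $|k|\ge 2$ is impossible: a nonprimitive class has no simple representative, since the geodesic representative of $c^k$ has self-intersection. So $k = \pm 1$, and $\alpha$ is homotopic to $c$, a component of $c^u$.

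\textbf{Where atoroidality enters.} The input above that needs care is that the stabilizer of $\ell$ in $\pi_1(S)$ is exactly infinite cyclic, rather than something larger. This is where I expect the main obstacle, and I would simply cite \cite{landry2025transverse} and \cite[Lemma 8.3]{LMT21} for it. If needed, the argument is that the full stabilizer of a periodic leaf in $\pi_1(M)$ is virtually $\Z$ when $M$ is atoroidal: a $\Z^2$ would fix the leaf and give a $\Z^2$ subgroup of $\pi_1(M)$.

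\textbf{Symmetric case.} Interchanging the roles of stable and unstable gives the statement for unstable frontier leaves and $c^s$.
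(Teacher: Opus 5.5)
Your proposal is correct and is essentially the paper's proof: the unique unstable half-leaf based at the fixed point on the frontier leaf meets $\Omega(\wt S)$ in an invariant line that projects to a closed leaf of $\FF^u_S$. Your Step~2 makes explicit what the paper leaves implicit, namely that simplicity of $\alpha$ forces $\gamma$ to be primitive and hence a generator of the infinite cyclic stabilizer. That cyclicity is part of the cited background, so you do not need your aside on atoroidality. As written the aside would not suffice anyway: excluding $\Z^2$ does not exclude a non-abelian stabilizer, and cyclicity really comes from the stabilizer of $\ell$ fixing a periodic point of $\orb$, whose stabilizer is that of a lifted orbit.
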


\begin{proof}
    Suppose $\ell$ is a stable leaf slice in $\fr \Omega(\wt{S})$ fixed by $\gamma$. Form the above discussion, there exists a unique $\gamma$-fixed point $p\in\ell$ with a unique unstable half-leaf $r$ based at $p$ and intersecting $\Omega(\wt{S})$. Then $r\cap\Omega(\wt{S})$ is also fixed by $\gamma$, projecting to a component of $c^u$ homotopic to $\alpha$. 
    
    The other case is similar.
\end{proof}

\begin{remark}
\label{rmk:outside}
As observed in \cite[Lemma 8.3]{LMT21}, 
in the statement of \Cref{lem:identify-multicurves}, one can instead assume that $\gamma$ fixes a point $p \in \orb$. The point $p$ cannot be contained in $\Omega(\wt S)$ because then $\alpha$ would be homotopic in $M$ to a closed orbit that crosses $S$. Hence, $p$ is separated from $\Omega(\wt S)$ by a unique frontier leaf $\ell$, which is necessarily fixed by $\gamma$. Moreover, $\ell$ is stable/unstable when $\Theta^{-1}(\ell)$ lies below/above $\wt S$ in $\wt M$ with respect to its coorientation induced from $\varphi$. For more details, see the refences above and the remarks following the proof of \cite[Proposition 4.3]{fenley1999surfaces}.
\end{remark}

\subsection{Curve graphs and subsurface projections}
\label{sec:curvegraph}

Let $S = S_{g,b}$ be a compact topological surface with genus $g$ and $b$ boundary components. The \emph{curve graph} $\CC(S)$ of a compact surface $S$, first introduced by Harvey \cite{Ha} and popularized by Masur--Minsky \cite{MM1}, is defined as follows. If $S$ is not an annulus, then $\CC(S)$ is a simplicial graph with vertex set $\CC^0(S)$ the isotopy classes of essential simple closed curves on $S$, and two vertices are connected by an edge if they have representatives with minimal intersection number, which is $1$ when $S=S_{1,0}$ or $S_{1,1}$, $2$ when $S=S_{0,4}$ and $0$ otherwise. We assign length one to each edge and consider $\CC(S)$ with its induced graph metric. For two essential simple closed curves $\alpha$ and $\beta$ on $S$, we define $d_{\CC(S)}(\alpha,\beta)$ to be the distance in $\CC(S)$ between their corresponding vertices. For subsets $X,Y \subset \C(S)$, we set $d_{\C(S)} (X,Y)= \diam (X \cup Y)$.

If $S$ is an annulus, we define $\CC(S)$ to have vertices the set of isotopy classes of embedded arcs connecting two boundaries of $S$, where the isotopy is relative to endpoints. Two vertices are connected by an edge if they have disjoint representatives. In this case it can be shown that $\CC(S)$ is quasi-isometric to $\Z$ \cite[Section 2.4]{MM2}.

\smallskip
When $Y$ is an essential subsurface of $S$,
there is a partially defined \emph{subsurface projection} $\pi_{Y}$ from $\C(S)$ to the power set of $\C (Y)$, which is defined as follows (see \cite[Section 2]{MM2} for the complete details).
Let $S_Y$ be the cover of $S$ corresponding to the subgroup $\pi_{1}(Y) < \pi_{1}(S)$. Fix an arbitrary complete hyperbolic metric on $S$, and consider the Gromov compactification $\overline S_Y$ of $S_Y$.
Then, given any essential multicurve $\alpha$ on $S$, 
we consider its complete preimage $\wt {\alpha}$ in $\overline S_Y$ and
define $\pi_{Y}(\alpha)$ to be the subset of (isotopy classes of) curves and arcs in $\wt \alpha$ that are essential in $Y \cong \overline S_Y$; this can be either a collection of pairwise disjoint curves or arcs, or empty in the event that $\alpha \cap Y = \emptyset$, up to isotopy. As such, $\pi_{Y}$ is technically a map from $\C(S)$ to the power set of the \emph{curve and arc graph}, but each arc can be completed to a curve using an arc along the boundary of $Y$ so that the resulting collection of curves has diameter at most $2$ \cite[Lemma 2.2]{MM2}.

Given two multicurves $\alpha, \beta \in \C(S)$, we define their $Y$-\emph{subsurface distance} to be the diameter of the projection $\pi_{Y}(\alpha) \cup \pi_{Y}(\beta)$ in $\C(Y)$:
\[
d_{\C(Y)}(\alpha, \beta) = \diam_{\C(Y)} (\pi_{Y}(\alpha) \cup \pi_{Y}(\beta)).
\]
If either $\alpha$ or $\beta$ does not intersect $Y$ essentially (i.e. they are disjoint up to isotopy), then we use the convention that $d_{\C(Y)}(\alpha, \beta) = 0$. 

\section{Flow saturated annuli and disks}
\label{sec:annuli}

Let $S$ be a closed embedded surface in $M$ transverse to an almost pseudo-Anosov flow $\phi$, and let $N$ be the cut manifold $M\cut S$. 
The boundary $\partial N$ can be decomposed as $\partial_+N \sqcup \partial_-N$ where $\phi$ flows into $N$ on $\partial_-N$ and flows away from $N$ on $\partial_+N$. We sometimes implicitly identify $\partial_\pm N$ with $S$ and use $\varphi_N$ to denote the induced semiflow on $N$.

A \emph{product annulus} $A$ in $N$ is an embedded essential (i.e. incompressible and non boundary parallel) annulus with one boundary component $\partial_+A$ on $\partial_+N$ and the other boundary component $\partial_-A$ on $\partial_-N$. Moreover, a \emph{product flow annulus} is a product annulus that is foliated by flow segments from $\partial_-A$ to $\partial_+A$. One of the main results of this section is \Cref{lem:saturated-annulus}, which says that any product annulus $A$ in $N$ 
 can either be homotoped to a product flow annulus $A_\phi$, 
 or the two boundary curves $\partial_\pm A$ of $A$ give components in $c^{u/s}$, up to isotopy in $\partial_\pm N$.

\smallskip
Let $\gamma$ be an element in $\pi_1(M)$ whose conjugacy class is represented by the core curve of a product annulus $A$, and let $\wt{A}$ be a lift of $A$ to $\wt{M}$ stabilized by the action of $\gamma$. Then the boundary of $\wt{A}$ lies on two lifts $\wt{S_-}$ and $\wt{S_+}$ 
covering $\partial_-N$ and $\partial_+N$ respectively, and $\gamma$ preserves both $\wt{S_-}$ and $\wt{S_+}$. 

The proof of \Cref{lem:saturated-annulus} requires an analysis of the shadows of $\wt{S_-}$ and $\wt{S_+}$ in $\orb$. Recall that for a subset $U$ in $\wt{M}$, the projection of $U$ to $\orb$, called its \emph{shadow}, is denoted by $\Omega(U)$. As before, the frontier of $\Omega(U)$ in $\orb$ is denoted $\fr \Omega(U)$.

\begin{lemma}\label{lem:2-shadows}
If $\Omega(\wt{S_-})$ and $\Omega(\wt{S_+})$ intersect in $\orb$, then the intersection is connected.
\end{lemma}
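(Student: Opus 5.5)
Throughout, write $\Omega_\pm = \Omega(\wt{S_\pm})$. The plan is to exploit the structure of $\Omega_\pm$ recorded in \Cref{sec:transvese_shadow}: each is an open topological disk in $\orb \cong \R^2$ whose frontier is a union of periodic leaf slices of $\orb^s$ or $\orb^u$. Since $\wt{S_-}$ and $\wt{S_+}$ are disjoint lifts of surfaces transverse to $\wt\phi$, each flowline meets each of them at most once. Moreover, a flowline meeting both $\wt{S_-}$ and $\wt{S_+}$ does so in a definite order. We fix this order using the coorientation: $\wt{S_+}$ lies above $\wt{S_-}$ in the direction of the flow across $\wt A$, or the reverse, but consistently so, because $\wt M \ssm \wt{S_-}$ has two components and $\wt{S_+}$ lies in one of them.

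The key observation we aim for is that $\Omega_- \cap \Omega_+$ is exactly the shadow of the region $R$ of $\wt M$ between $\wt{S_-}$ and $\wt{S_+}$. More precisely, it is the set of orbits passing through the slab $R = $ (closure of the component of $\wt M \ssm (\wt{S_-}\cup\wt{S_+})$ adjacent to both). An orbit meeting both surfaces contains a segment from one to the other, and this segment lies in $R$. Conversely, a point of $\Omega_-\cap\Omega_+$ gives a flow segment between the surfaces. The first step is therefore to reduce the lemma to showing that the set $X\subset \wt{S_-}$ of points whose forward (or backward) orbit hits $\wt{S_+}$ is connected. Since $\Theta|_{\wt{S_-}}$ is a homeomorphism onto $\Omega_-$ and $\Theta(X) = \Omega_-\cap\Omega_+$, connectedness of $X$ suffices.

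To show $X$ is connected, I would argue via the leaf structure in $\orb$ rather than in $\wt M$, using separation properties in the plane. Suppose $p,q \in \Omega_-\cap\Omega_+$ lie in different components. Both sets are open disks whose frontiers consist of leaf slices, and each frontier leaf slice $\ell$ of $\Omega_+$ bounds a half-plane $H_\ell$ of $\orb$ disjoint from $\Omega_+$, with $\Omega_+ = \orb \ssm \bigcup_\ell \overline{H_\ell}$ (the half-planes being pairwise disjoint). An intersection of two such ``complements of disjoint half-planes'' in $\R^2$ could a priori be disconnected only if some half-plane $H_\ell$ of $\Omega_+$ separates two pieces of $\Omega_-$. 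That would require the leaf slice $\ell$ to cross $\Omega_-$ twice, entering and exiting. So the main step is to show that a frontier leaf slice $\ell$ of $\Omega_+$ meets $\Omega_-$ in a connected set.

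For that step, note that $\Theta^{-1}(\ell)\cap \wt{S_-}$ is a lift of a leaf of $\FF^{s/u}_{S}$ restricted to $\wt{S_-}$. Leaves of the singular foliation on $\wt{S_-}$ are properly embedded trees. By \cite{fenley1999surfaces}, the leaf slice $\ell\cap\Omega_-$ is a union of leaf slices in $\Omega_-$, each a properly embedded line in $\Omega_-$ (using that $\ell$ has no singularities in its interior or that leaf slices of a tree are lines). A line in $\orb$ can intersect a disk bounded by leaf slices of the transverse-type structure in more than one arc only if it crosses a frontier leaf of $\Omega_-$ of the same type twice, which is impossible since two distinct leaves of one foliation in $\orb$ are disjoint. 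If the frontier leaf is of the opposite type, then two transverse leaves in $\orb$ intersect at most once, since there are no bigons in the orbit space of a pseudo-Anosov flow \cite{fenley2001quasigeodesic}. Hence $\ell\cap\Omega_-$ is an interval. I expect this to be the hardest point: one must handle singular leaves and blowup segments, where a ``leaf'' is a tree rather than a line. I would first try to pass to leaf slices and use the no-bigon/Jordan-curve argument on each slice.

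Given that each frontier half-plane of $\Omega_+$ cuts $\Omega_-$ along a single arc, removing the disjoint closed half-planes $\overline{H_\ell}$ one at a time from the disk $\Omega_-$ removes a boundary-adjacent region each time. Such a removal is a cut along a properly embedded arc, and it leaves one side. We need this to be the side that still meets $\Omega_+$. This holds because the half-planes are disjoint and $\Omega_+$ is connected. Hence the intersection remains connected. Finally, we handle the infinitely many half-planes via local finiteness of the frontier (each compact set meets finitely many frontier leaves, by the covering action of $\pi_1(S)$), concluding that $\Omega_-\cap\Omega_+$ is connected.
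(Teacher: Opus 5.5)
Your argument is essentially the paper's with the roles of the two shadows swapped. The paper cuts $\Omega(\wt{S_+})$ along the components of $\fr\Omega(\wt{S_-})\cap\Omega(\wt{S_+})$. Each such component is a single arc of a leaf slice, because leaves of $\orb^{s}$ and $\orb^{u}$ meet in connected sets, and $\Omega(\wt{S_-})$ lies on one side of each arc; this mirrors your frontier half-planes of $\Omega_+$ cutting $\Omega_-$. Your opening slab reduction is unnecessary and in fact false as stated: lifts of closed orbits of $\varphi$ contained in $N$, or orbits entering $\wt N$ through other boundary lifts, pass through $R$ without meeting both $\wt{S_-}$ and $\wt{S_+}$. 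Nothing in your later argument depends on it, so the proof stands.
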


\begin{proof}
Any component $s$ of $(\mathrm{Fr}\,\Omega(\wt{S_-}))\cap\Omega(\wt{S_+})$ is a segment of a leaf slice $\ell$ of $\orb^{s/u}$. The slice $\ell$ separates $\orb$, and $\ell\cap\Omega(\wt{S_+})$ is connected because leaves of $\orb^{s/u}$ intersect in connected sets.
Therefore, $s$ separates $\Omega(\wt{S_+})$ into two components and $\Omega(\wt{S_-})$ can intersect only one of them. The argument works for any such $s$, so $\Omega(\wt{S_-})\cap\Omega(\wt{S_+})$ is connected.
\end{proof}

Returning to the discussion above, observe that $\gamma$ preserves $\wt{S_-}$ and $\wt{S_+}$, hence also $\Omega(\wt{S_-})$ and $\Omega(\wt{S_+})$. If $A_\phi$ is a product flow annulus in $N$ properly homotopic to $A$, then $A_\phi$ has a $\gamma$--invariant lift $\wt{A_\phi}$ connecting $\wt{S_-}$ and $\wt{S_+}$. The shadow $\Omega(\wt A_\phi)$ of $\wt{A_\phi}$ is a 
$\gamma$--invariant open arc that is properly embedded
in the disk $\Omega(\wt{S_-})\cap\Omega(\wt{S_+})$.  

Conversely, any $\gamma$--invariant properly immersed open path $\alpha$ in $\Omega(\wt{S_-})\cap\Omega(\wt{S_+})$ determines a product flow annulus $A_\phi \subset N$ that is properly homotopic
to $A$ as follows.  Let $\wt N$ be the component of the preimage of $N$ containing $\wt{S_-}$ and $\wt{S_+}$ as boundary components and set $D_\phi = \Theta^{-1}(\alpha) \cap \wt N$. This is a $\gamma$--invariant properly immersed strip in $\wt N$ and its image $A'_\phi$ in $N$ is a properly immersed annulus that is foliated by flow segments. Note that $A_\phi'$ is properly homotopic to $A$ in $N$ because they both lift to $\gamma$-invariant strips connecting $\wt{S}_+$ and $\wt{S}_-$. We claim that there is a way to resolve the self-intersections of $A'_\phi$ along flow segments to obtain a product flow annulus $A_\phi$ properly homotopic to $A$. For this, note that $A'_\phi \cap \partial_+ N$ is equal to the projection of $\alpha$ to $S$ (under the identification of $S$ with $\partial_+N$) and so it is homotopic to the simple closed curve $\partial_+A = A \cap \partial_+N$. Hence, we can resolve it along bigons and monogons to obtain a simple closed curve homotopic to $\partial_+A$. Since $A'_\phi$ intersects itself along flow segments, this resolution can be extended to a resolution of $A_\phi'$ to obtain a product flow annulus $A_\phi$ with simple boundaries (and hence embedded). 
Since $\partial_-N$ incompressible, the resolution induces surgeries along trivial bigons and monogons in $\partial A'_\phi \cap \partial_- N$, and so we have that
$\partial_-A_\phi$ is also homotopic to $\partial_-A$ in $\partial_- N$. Finally, we claim that $A_\phi'$ and $A$ are properly homotopic in $N$. This is because all the resolutions we performed can be realized through proper homotopies, so $A_\phi$ is properly homotopic to $A_\phi'$, which is further properly homotopic to $A$ as we remarked earlier.

We are now ready to prove the following lemma. Note again that we implicitly identify $\partial_+N$ and $\partial_-N$ with $S$.

\begin{lemma}[Product annuli] \label{lem:saturated-annulus}
Let $A\subseteq N$ be any product annulus. Then either $A$ is properly homotopic in $N$ to a product flow annulus, or $\partial_+A$ is a component of $c^u$ and $\partial_-A$ is a component of $c^s$, up to isotopy in $\partial N$.
\end{lemma}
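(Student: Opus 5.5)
By the discussion preceding the lemma, it suffices to decide whether there is a $\gamma$--invariant properly embedded open path in the disk $\Omega(\wt{S_-})\cap\Omega(\wt{S_+})$. Here $\gamma$ represents the core of $A$, and the intersection is connected by \Cref{lem:2-shadows}; it is also simply connected, being an intersection of two open disks in $\orb\cong\R^2$ whose frontiers are unions of leaf slices. If such a path exists, we get a product flow annulus properly homotopic to $A$. So the plan is to show that when no such path exists, $\gamma$ must fix a stable frontier leaf of $\Omega(\wt{S_+})$ and an unstable frontier leaf of $\Omega(\wt{S_-})$. \Cref{lem:identify-multicurves} then finishes: $\partial_+A$ is homotopic, hence isotopic on $S$, to a component of $c^u$, and $\partial_-A$ to a component of $c^s$.

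\emph{Step 1: the intersection is nonempty.} $\gamma$ acts freely on each shadow, since $\Omega(\wt{S_\pm})\cong \wt{S_\pm}$ and $\gamma$ acts as a covering translation. If $\gamma$ fixed a point of $\orb$ it would correspond to a periodic orbit, and by \Cref{rmk:outside} that point lies outside each shadow.

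I would show that the shadows must meet, since $\wt A$ connects $\wt{S_-}$ to $\wt{S_+}$. If they were disjoint, some frontier leaf slice $\ell$ of $\Omega(\wt{S_+})$ would separate the two shadows. Because the picture is $\gamma$--invariant, $\ell$ can be chosen $\gamma$--invariant, using that only finitely many frontier leaves up to the $\pi_1(S)$ action are relevant. Then I would need to argue that this forces the boundary curves to be the multicurves anyway. This case (case (a)) feeds into the same conclusion as Step 3.

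\emph{Step 2: the model when the intersection $D$ is nonempty.} $\gamma$ acts freely on the open disk $D$. A free $\Z$-action on a plane is properly discontinuous (Brouwer), so $D/\langle\gamma\rangle$ is an open annulus. An invariant properly embedded line exists exactly when this quotient annulus has its two ends coming from the two ``ends'' of the $\gamma$-axis in $\orb$. Concretely, consider the attracting and repelling directions of $\gamma$ on the compactified disk $\overline{\Omega(\wt{S_+})}\cup\partial$. A line through $D$ fails to be proper only if $D$ is cut off by a $\gamma$--invariant frontier leaf. So if no invariant proper path exists, some frontier leaf slice of $\Omega(\wt{S_-})$ or $\Omega(\wt{S_+})$ is $\gamma$--invariant and lies between the endpoints.

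\emph{Step 3: identify the types.} This is the main obstacle: showing that the obstructing invariant leaves are a stable frontier leaf of $\Omega(\wt{S_+})$ and an unstable frontier leaf of $\Omega(\wt{S_-})$. The hardest part is showing both appear, not just one. For the types, I would use the orientation fact in \Cref{rmk:outside}: frontier leaves of the shadow of a lift lying above $\wt{S}$ in the flow direction are unstable, and those below are stable. The whole of $\wt{S_+}$ lies on the positive side of the region $\wt N$. Therefore the frontier leaves of $\Omega(\wt{S_+})$ whose preimages meet $\wt N$ come from orbits that exit through $\wt{S_+}$ without having crossed $\wt{S_-}$, and these are stable. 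Similarly, the leaves of $\Omega(\wt{S_-})$ relevant to $D$ are unstable.

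To get both leaves, I would use the $\gamma$--symmetry. If only an unstable leaf of $\Omega(\wt{S_-})$ were invariant, $\gamma$ would fix a point $p$ on it, by the translation/fixed point structure of the stabilizer. Then the unique stable half-leaf at $p$ entering $\Omega(\wt{S_-})$ would meet $\Omega(\wt{S_+})$ and produce an invariant proper arc in $D$, a contradiction. Hence both invariant leaves exist, and \Cref{lem:identify-multicurves} applied to $\wt{S_\pm}$ gives the conclusion.
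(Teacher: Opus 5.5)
There is a genuine gap: the case split is in the wrong place. The proof splits according to whether $D=\Omega(\wt{S_-})\cap\Omega(\wt{S_+})$ is empty.

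Your Step~1 plan to show that the shadows always meet cannot succeed. The strip $\wt A$ joining $\wt{S_-}$ to $\wt{S_+}$ says nothing about a \emph{flowline} joining them, and the disjoint case is exactly where the multicurve conclusion comes from. Conversely, when $D\neq\emptyset$, where you search for obstructions, there are none. Indeed, $D$ is open and connected by \Cref{lem:2-shadows}, so for any $x\in D$ there is an arc $\alpha_0\subset D$ from $x$ to $\gamma x$. Then $\bigcup_n\gamma^n\alpha_0$ is a $\gamma$--invariant path, and it is proper because $\gamma$ acts properly discontinuously on $\Omega(\wt{S_+})\cong\wt{S_+}$. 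The discussion preceding the lemma only needs a properly \emph{immersed} path, so no Brouwer argument is required. Consequently, the claim in Step~2 that properness can fail because a $\gamma$--invariant frontier leaf cuts off $D$ is unjustified; that situation never occurs. Your Step~3 argument that \emph{both} invariant leaves exist ends by producing an invariant arc in $D$, so it is vacuous when $D\neq\emptyset$ and makes no sense when $D=\emptyset$.

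What is actually needed is an argument in the disjoint case, which you leave as ``I would need to argue.'' It is short. If $D=\emptyset$, then $\Omega(\wt{S_+})$ is open, connected and disjoint from $\Omega(\wt{S_-})$. So it is separated from $\Omega(\wt{S_-})$ by a frontier leaf slice $\ell_-$ of $\Omega(\wt{S_-})$. This separating leaf is unique, hence $\gamma$--invariant; no finiteness argument is needed. Since $\Theta^{-1}(\ell_-)$ separates $\wt{S_-}$ from $\wt{S_+}$, and $\wt{S_+}$ lies on the positive side of $\wt{S_-}$, the leaf $\Theta^{-1}(\ell_-)$ lies above $\wt{S_-}$. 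Hence $\ell_-$ is unstable (\Cref{rmk:outside}), and \Cref{lem:identify-multicurves} makes $\partial_-A$ homotopic to a component of $c^s$. Exchanging the roles of $\wt{S_\pm}$ gives a stable $\gamma$--invariant frontier leaf of $\Omega(\wt{S_+})$, and hence $\partial_+A$ is homotopic to a component of $c^u$. Both leaves come directly from this symmetry, with no contradiction argument. Your identification of the leaf types via the coorientation is correct; it just has to be applied in the disjoint case rather than the nonempty one.
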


\begin{proof}
If $\Omega(\wt{S_-})$ and $\Omega(\wt{S_+})$ intersect, then by \Cref{lem:2-shadows}, for any point $x\in\Omega(\wt{S_-})\cap\Omega(\wt{S_+})$, there is an arc $\alpha_0$ contained in $\Omega(\wt{S_-})\cap\Omega(\wt{S_+})$ connecting $x$ to $\gamma x$. The concatenation $\bigcup_n \gamma^n \cdot \alpha_0$ is a properly immersed path in $\Omega(\wt{S_-})\cap\Omega(\wt{S_+})$ that is $\gamma$--invariant. 
By the discussion above, this gives a product flow annulus $A_\phi$ in $N$ which is properly homotopic to $A$.

If $\Omega(\wt{S_-})$ and $\Omega(\wt{S_+})$ are disjoint, then $\Omega(\wt{S_-})$ has a periodic frontier leaf slice $\ell_-$ separating $\Omega(\wt{S_-})$ from $\Omega(\wt{S_+})$. Since both $\Omega(\wt{S_-})$ and $\Omega(\wt{S_+})$ are fixed by $\gamma$, so is $\ell_-$. The $2$-dimensional leaf $\Theta^{-1}(\ell)$ in $\wt{M}$ corresponding to $\ell_-$ must separate $\wt{S_-}$ and $\wt{S_+}$. 
Since $\wt{S_+}$ lies on the positive side of $\wt{S_-}$, 
 $\ell_-$ is an unstable leaf (see, for example, \cite[Proposition 7.8]{landry2025transverse}). By \Cref{lem:identify-multicurves}, $\partial_-A$ is homotopic to a component of $c^s$. The case for $\partial_+A$ and $c^u$ is similar.
\end{proof}

\subsection{Principal stable/unstable curves}\label{sec:principal}
Let $\omega$ be a closed orbit of $\varphi$ that is contained in $N$ and let $A^u_\omega$ be an unstable periodic half-leaf based at $\omega$, whose backwards orbits accumulate on $\omega$,
such that the closed components of $A^u_\omega \cap S$ (if any) form a subset of $c^u \subset S$. Note that every component of $c^u$ comes from such an intersection essentially by definition. The unstable curves of $A^u_\omega \cap S$ are ordered $c_1,\ldots, c_k$ by their proximity to $\omega$ and for $2\le i \le k-1$, the pair $c_i, c_{i+1}$ cobounds a subannulus of $A^u_\omega$ that is naturally a product flow annulus in $N$. The unstable curve $c_1$ cobounds a subannulus of $A^u_\omega$ with $\omega$ itself that is entirely contained in $N$; such an unstable curve is called \emph{principal} (\Cref{fig:principal}). We denote the nonempty sub multicurve of $c^u$ consisting of principal curves by $c^u_p$.

\begin{figure}[h]
    \centering
    \labellist
    \pinlabel $\omega$ at 78 0
    \pinlabel $c_1$ at 150 50
    \pinlabel $c_2$ at 150 65
    \pinlabel $c_3$ at 150 85
    \endlabellist
    \includegraphics[width= .3\textwidth]{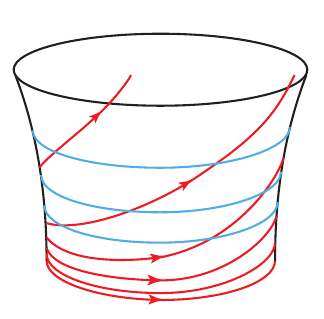}
    \caption{An unstable periodic half-leaf $A^u_\omega$ based at an orbit $\omega$ in $N$. It intersects $S$ in three closed leaves $\{c_1,c_2,c_3\} \subset c^u$, where $c_1$ is principal and the others cobound product flow annuli in $N$.}\label{fig:principal}
\end{figure}

In the same manner, we define the principal stable multicurve and denote it $c^s_p$.

Finally, we note that it is clear from this discussion that each component of $c^{s/u} \subset S$ inherits a canonical orientation so that it is homotopic to $\omega$ along the annulus $A^{s/u}_\omega$. 

\begin{corollary}\label{cor:window-distance}
For any product annulus $A$ in $N$, we have
\[
d_{\CC(S)}(\partial_-A, c_p^{s})\leq 1 \quad  \text{ and } \quad d_{\CC(S)}(\partial_+A, c_p^{u})\leq 1.
\]
\end{corollary}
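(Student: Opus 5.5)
By \Cref{lem:saturated-annulus}, $A$ is either properly homotopic to a product flow annulus or its boundary curves are components of $c^{s/u}$ up to isotopy. I will treat these two cases separately and prove only the unstable inequality $d_{\C(S)}(\partial_+A, c^u_p)\le 1$. The stable one is symmetric, reversing the flow and swapping the roles of $\partial_\pm N$.

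\emph{Case 1: $\partial_+A$ is isotopic to a component $c$ of $c^u$.} The components of $c^u$ are pairwise disjoint, and $c^u_p$ is a nonempty submulticurve of $c^u$. So either $c\in c^u_p$, giving distance $0$, or $c$ is disjoint from every component of $c^u_p$. In the second case $c$ is distinct from them as an isotopy class, since distinct closed leaves of a foliation are non-isotopic or parallel. Either way the diameter of $\{c\}\cup c^u_p$ is at most $1$, using the convention $d_{\C(S)}(X,Y)=\diam(X\cup Y)$ and the fact that $c^u_p$ is itself a multicurve. The one point needing care is parallel components, which are still distance $\le 1$.

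\emph{Case 2: $A$ is homotopic to a product flow annulus $A_\phi$.} Here $\partial_+A$ is homotopic, hence isotopic in $\partial_+N$, to $\partial_+A_\phi$. I would show that $\partial_+A_\phi$ is disjoint from $c^u$, or equal to a component of it. Since $\partial_+A_\phi$ is the forward endpoint set of flow segments that cross $N$ from $\partial_-N$ to $\partial_+N$, it suffices to show that points of closed leaves in $c^u_p$ are not reached by such full flow segments. A principal curve $c_1$ cobounds with a closed orbit $\omega$ a subannulus of $A^u_\omega$ that lies entirely in $N$. Following a point of $c_1$ backwards, it stays in that subannulus and accumulates on $\omega$, never meeting $\partial_-N$. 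Hence $c^u_p$ is disjoint from the set of points of $\partial_+N$ that flow back to $\partial_-N$, and that set contains $\partial_+A_\phi$. Thus $\partial_+A_\phi\cap c^u_p=\emptyset$, which gives distance $\le 1$: the curves either coincide up to isotopy or span edges.

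The main obstacle is making Case 2 rigorous at the level of isotopy classes. The curve $\partial_+A_\phi$ might be homotopic to a component of $c^u_p$, which is harmless. It might also be inessential, but that is impossible because $A$ is essential and $\partial N$ is incompressible. The real difficulty is the blowup-annulus and singular-orbit subtleties in the claim that backward orbits from $c^u_p$ never exit through $\partial_-N$. I expect this to follow directly from the definition of the principal half-leaf $A^u_\omega$ as contained in $N$ between $\omega$ and $c_1$.
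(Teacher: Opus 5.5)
Your proposal is correct and follows the paper's proof: after invoking \Cref{lem:saturated-annulus}, the paper reduces to the product flow annulus case. It then argues exactly as you do that a point of $\partial_+A_\phi \cap c_1$ would have a backward orbit that both stays in $N$ asymptotic to $\omega$ and exits through $\partial_-A_\phi$. Your explicit handling of the other alternative (where $\partial_+A$ is itself a component of $c^u$, hence equal to or disjoint from each component of $c^u_p$) simply fills in the case the paper dismisses with ``we can assume.''
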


\begin{proof}
 By \Cref{lem:saturated-annulus}, we can assume that $A$ can be homotoped to be a product flow annulus $A_\phi$ in $N$.
We will consider the inequality for $\partial_+A$ and $c_p^{u}$, since the one for $\partial_-A$ and $c_p^{s}$ is similar. In particular, we show that if $c_1$ is a principal unstable loop, then it is disjoint from $\partial_+ A_\phi$.

By definition of a principal unstable curve, there exists a periodic orbit $\omega$ in $N$ so that $\omega$ and $c_1$ cobound a subannulus of $A^u_\omega$ that is contained in $N$.
If $\partial_+A_\phi$ intersects $c_1$ at $x$, then the backward orbit of $x$ has to be contained in $N$ and asymptotic to $\omega$. 
On the other hand, backwards orbits in $A_\phi$ run from $\partial_+A_\phi \subset \partial_+ N$ to $\partial_-A_\phi \subset \partial_-N$, giving a contradiction.

We conclude that $\partial_+A$ and $c_p^u$ have distance at most one in $\CC(S)$.
\qedhere
\end{proof}

\begin{remark}[JSJ annuli] \label{rmk:jsj}
\Cref{cor:window-distance} implies that for any product annulus $A$ in $N$, $d_{\CC(S)}(\partial_+ A, c^{u})\leq 2$ and $d_{\CC(S)}(\partial_- A, c^{s})\leq 2$,
since the curve graph diameter of $c^u$ and $c^s$ is at most one. However, if $\partial_+ A$ has an essential intersection with some $c\subset c^u$, then $c$ is not principal and hence it cobounds a product flow annulus $A_\phi$, as in the discussion preceding \Cref{cor:window-distance}. Hence, the essential annuli $A$ and $A_\phi$ cannot be homotoped off one another and so we learn that $A$ is not a JSJ annulus (i.e. not an annulus of $\mc A$) as in \Cref{sec:jsj}.
\end{remark}

We also prove an analogous result for non-product essential annuli.

\begin{lemma}\label{lem:unbalanced-annuli}
    If $A$ is any properly embedded essential annulus in $N$ with $\partial A\subset\partial_-N$ (respectively, $\partial A\subset\partial_+N$), then both components of $\partial A$ are isotopic in $\partial N$ to components of $c^s$ (respectively, $c^u$).
\end{lemma}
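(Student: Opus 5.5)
Treat the case $\partial A\subset\partial_-N$; the other case follows by reversing the flow, which swaps $\partial_\pm N$ and stable/unstable. Let $\gamma\in\pi_1(M)$ represent the core of $A$ and let $\wt A$ be a $\gamma$--invariant lift to $\wt M$, inside a component $\wt N$ of the preimage of $N$. The two boundary components of $\wt A$ lie on lifts $\wt S_1,\wt S_2$ of $\partial_-N$, and both are $\gamma$--invariant. Since $A$ is essential and not boundary parallel, these lifts are distinct. Otherwise $A$ would be a $\pi_1$-injective annulus with both boundaries in one component of $\partial_-N$, so it would be homotopic into $\partial N$ rel boundary. By the standard fact for incompressible boundary in irreducible manifolds, $A$ would then be boundary parallel.

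\textbf{Step 1: the shadows are disjoint.} Both $\wt S_1$ and $\wt S_2$ are boundary components of $\wt N$ on which the lifted flow enters $\wt N$. Suppose some orbit $o$ of $\wt\phi$ met both. Each lift of $S$ is crossed at most once by a flowline, and $\wt M\ssm\wt S_i$ has two sides. Orient so that $o$ hits $\wt S_1$ first. Just after crossing $\wt S_1$ the orbit is in $\wt N$. To reach $\wt S_2$ from the side on which $\wt N$ does not lie, it must first leave $\wt N$. Everything after the crossing of $\wt S_2$ lies on the $\wt N$ side of $\wt S_2$. Since $\wt S_1$ and $\wt S_2$ both bound the same component $\wt N$, the orbit would be on the non-$\wt N$ side of $\wt S_2$ just before hitting it. So its segment from $\wt S_1$ to $\wt S_2$ starts in $\wt N$, leaves it through some positive boundary lift, and later re-enters. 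I would make this rigorous via the separation/ordering of lifts along orbits used in \cite{landry2025transverse}: lifts of $S$ crossed by a single orbit are linearly ordered by the flow. Two negative boundary lifts of the same $\wt N$ are not separated from each other by any lift that meets both, so no orbit meets both, and therefore $\Omega(\wt S_1)\cap\Omega(\wt S_2)=\emptyset$.

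\textbf{Step 2: identify frontier leaves.} The shadows are disjoint, open, and bounded by leaf slices. As in the proof of \Cref{lem:saturated-annulus}, there is a frontier leaf slice $\ell_1$ of $\Omega(\wt S_1)$ separating it from $\Omega(\wt S_2)$. This slice is unique: it is the frontier component of the complementary region of $\Omega(\wt S_1)$ containing the connected set $\Omega(\wt S_2)$. Since $\gamma$ preserves both shadows, it fixes $\ell_1$. The surface $\Theta^{-1}(\ell_1)$ separates $\wt S_1$ from $\wt S_2$ and lies on the positive side of $\wt S_1$, because $\wt S_2$ lies on the $\wt N$ side, which is the positive side. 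Arguing as in \Cref{lem:saturated-annulus} and \Cref{rmk:outside}, $\ell_1$ is therefore an unstable frontier leaf. \Cref{lem:identify-multicurves} then shows that the component of $\partial A$ on $\wt S_1$ is homotopic, hence isotopic, to a component of $c^s$. Applying the same argument to $\wt S_2$ handles the other component.

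\textbf{Main obstacle.} The delicate part is Step 1: rigorously ruling out an orbit that meets two distinct entering boundary lifts of the same $\wt N$. I would first try the argument that along any orbit, consecutive crossings of lifts of $S$ alternate between entering and exiting components of the preimage of $N$. An orbit starting inside $\wt N$ at $\wt S_1$ must exit $\wt N$ at a positive lift and never return, since $\wt N$ lies on one side of that exit lift and the orbit crosses the lift only once. That would prove disjointness directly.
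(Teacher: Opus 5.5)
Your proposal is correct and follows the paper's proof. The steps are the same: distinct $\gamma$--invariant entering lifts $\wt S_1,\wt S_2$ of one $\wt N$; disjoint shadows, since no flowline meets two entering boundary lifts of $\wt N$; a $\gamma$--fixed frontier slice $\ell_1$ separating the shadows, whose preimage lies on the positive side of $\wt S_1$ and so is unstable; and then \Cref{lem:identify-multicurves}.

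You also supply detail the paper leaves implicit, namely the uniqueness of $\ell_1$ and the exit-and-never-return argument in Step 1. One wording fix: your justification of distinct lifts should say that both boundary lines of $\wt A$ would lie on a single lift. It is not enough that $\partial A$ lies in one component of $\partial_-N$, since an essential annulus can have both boundaries there.
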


\begin{proof}
    We will only consider the case when $A$ has boundary in $\partial_- N$. The other case is similar. Let $\gamma$ be an element in $\pi_1(M)$ representing the free homotopic class of the core of $A$. Then $A$ has a lift $\wt{A}$ in $\wt{M}$ fixed by $\gamma$. Since $A$ is an essential annulus in $N$, the two components of $\partial\wt{A}$ lie in different lifts, denoted by $\wt{S_1}$ and $\wt{S_2}$, of $S$. We claim that shadows $\Omega(\wt{S_1})$ and $\Omega(\wt{S_2})$ are disjoint. Indeed, $\wt{A}$, $\wt{S_1}$ and $\wt{S_2}$ are contained in a lift $\wt{N}\subseteq\wt{M}$ of $N$, and on $\wt{S_1}$ and $\wt{S_2}$ the flow $\wt{\phi}$ flows into $\wt{N}$. So no flowline can intersect both $\wt{S_1}$ and $\wt{S_2}$.

    The shadow $\Omega(\wt{S_1})$ has a frontier leaf slice $\ell_1$ separating $\Omega(\wt{S_1})$ and $\Omega(\wt{S_2})$. Since $\gamma$ fixes both $\Omega(\wt{S_1})$ and $\Omega(\wt{S_2})$, it also fixes $\ell_1$ and some $p \in \ell_1$. The $2$-dimensional leaf slice in $\wt{M}$ corresponding to $\ell_1$ is on the same side of $\wt{S_1}$ as $\wt{S_2}$, that is the positive side. This means that $\ell_1$ is an unstable leaf. The unique stable periodic ray $r_1$ from $p_1$ intersecting $\Omega(\wt{S_1})$ projects to a component $\alpha_1$ of $c^s$ on $\partial_-N$, and $\alpha_1$ is homotopic to the component of $\partial A$ that lifts to $\wt{A}\cap\wt{S_1}$ (see \Cref{lem:identify-multicurves}). The same argument can be applied to $\wt{S_2}$ and the other component of $\partial A$, proving the lemma.
\end{proof}

\subsection{Transverse surfaces}\label{sec:trans_surf}
The rest of this section is devoted to studying properly embedded transverse surfaces in $N$. 

Consider a properly embedded surface $\Sigma$ in $N$ \emph{transverse} to $\phi$, which means there is an oriented branched surface structure on $\Sigma\cup\partial N$ that is transverse to $\phi$ and has no \emph{disks of contact}, 
which in this setting is a disk $D$ in $\partial N$ bounded by a component of $\partial \Sigma$ so that $D$ is on the one-sheeted side of $\partial D$.
Note that if such a disk of contact exists, then we can always use it to cap off the component of $\partial \Sigma$ producing a new transverse surface with fewer boundary components. This is sometimes called `splitting open' the disk of contact.

\begin{figure}[h!]
    \centering
    \includegraphics[width= .4\textwidth]{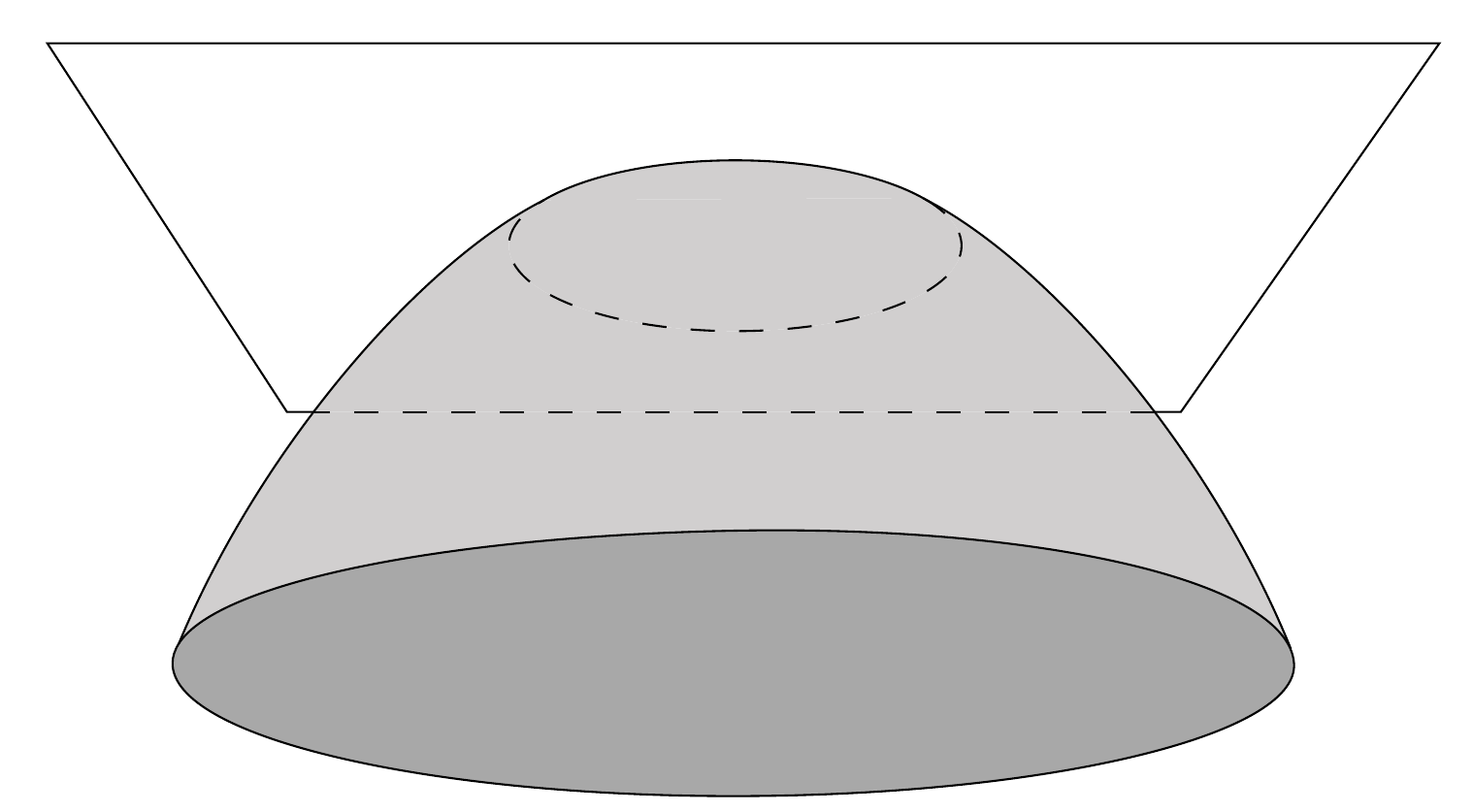}
    \caption{A disk of contact in $\Sigma \cup \partial N$.}\label{fig:disk-of-contact}
\end{figure}

For any properly embedded surface $\Sigma\subseteq N$ that is transverse to $\phi$, we can \emph{spin} $\Sigma$ around $\partial N$ to obtain a transverse (usually infinite-type) surface $L$ as follows (see also \cite[Theorem 3.7]{agol2008criteria} or \cite[Section 3.2]{landry2023endperiodic}). Take a neighborhood $\partial N\times[0,1]$ of $\partial N$ in $N$ that is foliated by flow segments such that $\partial N=\partial N\times\{0\}$. Then $L$ is the oriented cut-and-paste sum of $\Sigma$ and the collection of surfaces
\[
\bigcup_{n\in\Z_+}\partial N\times \left \{\frac{1}{n} \right \},
\]
after removing any closed components isotopic to components of $\partial N$. Such closed components only occur when $\partial \Sigma$ is nullhomologous on some component of $\partial N$. If this is the case for each component of $\partial N$, then $L$ is a closed surface.

Many properly embedded transverse surfaces can be spun to the same $L$, but we can remember the initial surface by keeping track of the induced \emph{junctures} on $L$; these are curves on $L$ associated to $\partial \Sigma$ coming from the intersection of $\Sigma$ with the parallel copies of $\partial N$. We note that with this construction, there is an induced copy of $\Sigma$ in $L$, which we call $\Sigma_L$, whose boundary is the ``innermost" collection of junctures. The condition that there are no disks of contact is equivalent to the condition that $\Sigma_L \subset L$ is an essential subsurface.

This construction is used in the next few arguments, as well as in \Cref{sec:int_bounds}.

\begin{lemma}[Transverse implies incompressible]\label{lem:incompressible}
    Any properly embedded surface $\Sigma$ in $N$ transverse to $\phi$ is incompressible.
\end{lemma}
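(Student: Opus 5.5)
We deduce incompressibility of $\Sigma$ from the known incompressibility of closed (or infinite-type) surfaces transverse to almost pseudo-Anosov flows in $M$, using the spun surface $L$ just constructed. Suppose $D \subset N$ is a compressing disk for $\Sigma$: $\partial D = \delta$ is an essential simple closed curve on $\Sigma$ and $\intr D \cap \Sigma = \emptyset$. We may take $D$ disjoint from the collar $\partial N \times [0,1]$ used in spinning. Indeed, $\delta$ can be isotoped in $\Sigma$ into the complement of the collar, which meets $\Sigma$ only in boundary-parallel annuli. Then $D$ is pushed off $\partial N$ by an innermost-disk argument, using that $\partial N$ is incompressible in $M$ (it is a copy of $S$, which is almost transverse and hence incompressible by \cite{mangum1998incompressible,fenley1999surfaces}).

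The spinning is a cut-and-paste only inside the collar, so $\delta$ lies in the copy $\Sigma_L \subset L$ and $D$ is a disk in $M$ with $\partial D \subset L$. Next we check that $\delta$ is essential in $L$. This holds because $\Sigma_L$ is an essential subsurface of $L$, which is exactly the no-disks-of-contact hypothesis. If $\delta$ bounded a disk $E$ in $L$, then either $E \subset \Sigma_L$, contradicting essentiality of $\delta$ in $\Sigma$, or $E$ contains a boundary component of $\Sigma_L$ bounding a disk in $L$, which contradicts essentiality of $\Sigma_L$.

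So $L$ would be a surface transverse to $\phi$ that is compressible in $M$. The key input is that this cannot happen. Fenley's and Mangum's argument for closed transverse surfaces works for any properly embedded, possibly infinite-type, transverse surface such as $L$. One splits $\mc F^s$ open along its singular leaves to get an essential lamination $\Lambda$ transverse to $L$. The induced foliation of the disk $D$ (put in general position, transverse to $\Lambda$ near $\partial D$ since $L$ is transverse) has boundary transverse to it. An index / Euler characteristic count on $D$ then forces a Reeb-type half-leaf or a closed leaf bounding a disk in a leaf of $\Lambda$, contradicting essentiality of $\Lambda$ (its leaves are $\pi_1$-injective planes, annuli or M\"obius bands, and it has no Reeb components). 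Equivalently, lifting to $\wt M$, each lift of $L$ embeds in the orbit space $\orb$ via $\Theta$. A loop on $\wt L$ that is nullhomotopic in $\wt M$ is therefore nullhomotopic in $\Theta(\wt L) \subset \orb \cong \R^2$, and since $\Theta$ is injective on $\wt L$ this gives a contradiction via $\pi_1$-injectivity of $L \to M$.

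The main obstacle is making this last step rigorous for noncompact $L$. Here I would use the orbit-space version. Take a component $\wt L$ of the preimage of $L$ in $\wt M$. Each flowline crosses $\wt L$ at most once: $\wt L$ separates $\wt M$, since $\wt M$ is simply connected and $L$ is properly embedded and transversely oriented, and transversality means the flow crosses $\wt L$ only in the positive direction. Hence $\Theta|_{\wt L}$ is injective, so $\wt L$ is homeomorphic to an open subset of the plane $\orb$. Therefore every component of the preimage of $L$ is simply connected, and so $L$ is $\pi_1$-injective. In particular $\delta$ would be nullhomotopic in $L$, contradicting the second step.
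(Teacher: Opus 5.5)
Your reduction matches the paper's: a compressing disk for $\Sigma$ gives a loop of $L$ that is essential in $L$ but null-homotopic in $M$; you then lift to $\wt M$ and use that $\wt L$ separates, so each flowline meets it at most once and $\Theta$ restricts to a homeomorphism onto $\Omega(\wt L)$. The final inference, however, is invalid. An open subset of $\orb \cong \R^2$ need not be simply connected. Moreover, $\pi_1(\wt L) \cong \ker(\pi_1 L \to \pi_1 M)$ is exactly the group you must show is trivial, and ``homeomorphic to an open subset of the plane'' only tells you it is free. Your ``equivalently'' sentence has the same hole. A null-homotopy in $\wt M$ of a loop $\gamma \subset \wt L$ projects to a null-homotopy of $\Theta(\gamma)$ in $\orb$, not in $\Theta(\wt L)$. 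If $\Theta(\gamma)$ winds around a point $x \notin \Omega(\wt L)$, the flowline $\Theta^{-1}(x)$ simply crosses the disk and misses $\wt L$, which is no contradiction.

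The missing idea is dynamical control of the shadow. The paper shows that $\Omega(\wt L)$ is bounded by leaf slices of $\orb^s$ and $\orb^u$, arguing as in \cite[Proposition 4.1]{Fen09} or \cite[Proposition 7.8]{landry2025transverse}. Since leaf slices are properly embedded lines in the plane, this rules out holes, so the shadow is simply connected. For noncompact $L$ this is where the real work lies. The only place compactness enters those proofs is the existence of a neighborhood $\wt L \times I$ whose $I$-fibers are flow segments of uniformly bounded length, and the spinning construction is what supplies this. Your essential-lamination alternative does not close the gap either, because its extension to infinite-type $L$ is precisely what you assert without proof. A minor point: $L$ is not properly embedded in $M$, since it accumulates on $\partial N$. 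The fact that $\wt L$ is properly embedded in $\wt M$, which your separation argument needs, comes from incompressibility of $\partial N$.
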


\begin{proof}
    By the previous discussion, we can spin $\Sigma$ along $\partial N$ to obtain a surface $L$ transverse to $\phi$. Consider $N \subset M$ and let $\wt{L}$ be a component of the preimage of $L$ in the universal cover $\wt{M}$. Note that $\partial N$ is incompressible, so $\wt{L}$ is properly embedded. This implies that $\wt{L}$ separates $\wt{M}$. Indeed, let $\mathcal{N}(\wt{L})$ be a tubular neighborhood of $\wt{L}$. Then by the Mayer-Vietoris sequence for $\R^3=\mathcal{N}(\wt{L})\cup(\R^3\ssm \wt{L})$ one can see that the inclusion map $H_0(\mathcal{N}(\wt{L})-\wt{L})\to H_0(\R^3-\wt{L})$ is a bijection. Since $\wt{L}$ is 2-sided, $\mathcal{N}(\wt{L})-\wt{L}$ has exactly two components, and so does $\R^3-\wt{L}$.
  
By transversality, every flowline intersects $\wt{L}$ at most once, so the natural projection map from $\wt{L}$ to $\Omega(\wt{L})$ is a homeomorphism. As discussed above, 
$\Sigma$ can be realized as an essential subsurface $\Sigma_L \subset L$ and so it suffices to prove that $\wt{L}$ is simply connected, which is equivalent to the simple connectedness of $\Omega(\wt{L})$. 
    
    The claim that $\Omega(\wt L)$ is simply connected follows from the fact that $\Omega(\wt{L})$ is bounded by leaf slices in $\orb^s$ and $\orb^u$, which can be proved using the same proof as \cite[Proposition 4.1]{Fen09}. Alternatively, one can use the proof of \cite[Proposition 7.8]{landry2025transverse}. The main property that guarantees the generalization of both proofs is the following fact coming from the spinning construction: 
    $\wt{L}$ has an $\epsilon$-neighborhood in $\wt M$
    that can be identified with $\wt{L}\times I$ with bounded length flowlines being the $I$-fibers. Indeed, this is the basic property used in \cite[Lemma 7.4]{landry2025transverse}, which is the only place where compactness of the surface is used in the proof of \cite[Proposition 7.8]{landry2025transverse}.
 This finishes the proof of the lemma.
\end{proof}

Finally, we will prove a lemma analogous to \Cref{lem:saturated-annulus} for boundary compressing disks. Recall that a boundary compressing disk of $\Sigma$ is an embedded closed disk $D$ in $N$ with $\partial D$ divided into an arc $\alpha_\partial$ in $\partial N$ and an essential arc $\alpha_\Sigma$ in $\Sigma$. We say $D$ is a \emph{product flow disk} $E$ if its interior is foliated by flowlines, such that there is a homeomorphism $h$ from $E$ to the unit disk $\Delta=\{|z|\leq1\}$ mapping the flowlines to vertical lines, with $\alpha_\partial=h^{-1}(\{z : \mathrm{Im} z\geq0, |z|=1\})$ and $\alpha_\Sigma=h^{-1}(\{z : \mathrm{Im} z\leq0, |z|=1\})$.

\begin{lemma}[Boundary compressions]\label{lem:boundary-compressions}
Suppose $\Sigma$ is a properly embedded surface in $N$ transverse to an almost pseudo-Anosov flow $\phi$, and let $D$ be a boundary compressing disk of $\Sigma$. Then after a proper isotopy keeping $\Sigma$ transverse to $\varphi$, $D$ can be isotoped to a product flow disk.
\end{lemma}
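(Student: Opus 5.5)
We lift to the universal cover and work in the orbit space, in the same way as the proofs of \Cref{lem:saturated-annulus} and \Cref{lem:incompressible}. Let $\alpha_\Sigma$ and $\alpha_\partial$ be the two arcs of $\partial D$. Lift $D$ to a disk $\wt D$ in a component $\wt N$ of the preimage of $N$. Then $\wt\alpha_\Sigma$ lies on a lift $\wt\Sigma$ of $\Sigma$, and $\wt\alpha_\partial$ lies on a lift $\wt S_\pm$ of a component of $\partial N$. The endpoints of $\wt\alpha_\partial$ lie on $\partial\wt\Sigma\subset \wt S_\pm$.

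We treat the case where the arc lies in $\partial_+N$; the other case is symmetric. Flowlines leave $\wt\Sigma$ on its positive side and eventually exit $\wt N$ through some component of $\partial_+\wt N$. The aim is to show that every point of $\wt\alpha_\Sigma$ flows forward into $\wt S_+$ while staying in $\wt N$. If so, the union of forward flow segments from $\wt\alpha_\Sigma$ to $\wt S_+$ is a disk $E$ foliated by flowlines. The forward-flow map restricted to $\alpha_\Sigma$ gives an arc in $\partial_+N$ with the same endpoints as $\alpha_\partial$, and the candidate product flow disk is the image of $E$.

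\emph{Step 1 (shadows).} We first show that $\Omega(\wt\alpha_\Sigma)\subset\Omega(\wt S_+)$. Spin $\Sigma$ to $L$ as in the proof of \Cref{lem:incompressible}, so that $\Omega(\wt L)$ is a simply connected region bounded by leaf slices. Near the endpoints of $\wt\alpha_\Sigma$ the flow passes from $\wt\Sigma$ directly to $\wt S_+$, since $\Sigma\cup\partial N$ is a transverse branched surface. The key claim is that $\Omega(\wt\Sigma)\cap\Omega(\wt S_+)$ is connected, by the argument of \Cref{lem:2-shadows}: both shadows are bounded by leaf slices, and leaves meet in connected sets. We then need the whole arc to lie in this intersection. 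Suppose instead that some point of $\Omega(\wt\alpha_\Sigma)$ leaves $\Omega(\wt S_+)$. Then a frontier leaf slice $\ell$ of $\Omega(\wt S_+)$ crosses $\Omega(\wt\alpha_\Sigma)$ at least twice. By the same connectedness argument, $\Theta^{-1}(\ell)$ would then cut off a subarc of $\alpha_\Sigma$ together with part of $\wt D$. We then use that $D$ is embedded and that $\wt\alpha_\partial\subset\wt S_+$ connects the two endpoints. The arc $\Omega(\wt\alpha_\partial)\subset \Omega(\wt S_+)$ and $\Omega(\wt\alpha_\Sigma)$ together bound a disk in $\orb$, namely the image of $\wt D$. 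A leaf slice crossing out of $\Omega(\wt S_+)$ would have to enter this disk, and the corresponding flowlines in $\wt D$ would have to leave $\wt N$ through a different boundary lift. That is impossible, because $\wt D$ meets $\partial\wt N$ only in $\wt S_+$ and $\wt \Sigma$. This step is the main obstacle. The cleanest route is to first isotope $D$ rel boundary to a disk that is transverse to the flow away from $\partial D$, and then argue that flowlines entering $D$ from $\wt\Sigma$ must exit through $\wt\alpha_\partial$.

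\emph{Step 2 (isotopy of $\Sigma$).} Once $E$ is built, its top arc $\beta\subset\partial_+N$ and $\alpha_\partial$ have the same endpoints. Both arcs, together with a trivial path, lift to arcs in $\wt S_+$ with common endpoints. Since $\wt S_+$ is a plane, $\beta$ is homotopic to $\alpha_\partial$ rel endpoints. Any intersections of $\beta$ with $\partial\Sigma$ or with itself are removed by sliding $\Sigma$ along $\partial_+N$ through bigons of $\partial\Sigma\cup\beta$. Each slide pushes $\Sigma$ along flow segments near $\partial_+N$, so it is a proper isotopy preserving transversality and introducing no disks of contact. After these slides, $E$ is an embedded product flow disk with the same boundary pattern as $D$. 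It is isotopic to $D$ because both are boundary compressing disks with homotopic boundaries in the irreducible, boundary-incompressible setting; \Cref{lem:incompressible} gives incompressibility of $\Sigma$, so a standard innermost disk and outermost arc argument applies.
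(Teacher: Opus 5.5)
There is a genuine gap in Step 1, and the rest of your argument depends on it. Your key claim, that $\Omega(\wt\Sigma)\cap\Omega(\wt S_+)$ is connected, does not follow from the argument of \Cref{lem:2-shadows}. That argument needs the frontier of each shadow to consist of leaf slices. Here $\wt\Sigma$ has boundary, so $\Omega(\wt\Sigma)$ is cut out by the shadows $\Omega(\wt j)$ of lifts of components of $\partial\Sigma$, which are arbitrary curves in $\orb$, not leaf slices.

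The stronger claim $\Omega(\wt\alpha_\Sigma)\subset\Omega(\wt S_+)$ for the given arc is false in general. You can isotope $\alpha_\Sigma$ in $\Sigma$ rel endpoints, carrying $D$ along by an ambient isotopy preserving $\Sigma$ and $\partial N$. Do this so the arc passes through a point of $\Sigma$ whose forward orbit never leaves $N$, for instance a point where $\Sigma$ crosses a closed orbit of $\varphi_N$. A lift of such a point lies in the shadow of no lift of $\partial_+N$.

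Your justification also fails on its own terms. $\Theta(\wt D)$ need not be an embedded disk. A flowline through a point of $\wt D$ is not confined to $\wt D$, so nothing forces it to exit $\wt N$ through $\partial\wt D$. Making $D$ transverse to the flow does not help either: flowlines cross a transverse disk rather than run through it from $\wt\Sigma$ to $\wt\alpha_\partial$.

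The missing idea is to give up the given arc and work with the spun surface $L$.
\begin{enumerate}
\item Since $\wt L$ is a properly embedded transverse plane without boundary, $\Omega(\wt L)$ is bounded by leaf slices, as in the proof of \Cref{lem:incompressible}.
\item So the argument of \Cref{lem:2-shadows} does give connectedness of $\Omega(\wt L)\cap\Omega(\partial_+\wt N)$. This yields a new arc $\beta$ there joining $\Omega(\wt j_1)$ to $\Omega(\wt j_2)$; these are distinct lifts because $\alpha_\Sigma$ is essential and $\Sigma$ is incompressible.
\item Flowing $\beta$ gives an immersed flow rectangle from $\beta_L\subset L$ to $\beta_+\subset\partial_+N$, which is then resolved along flow segments.
\item The crucial point is that $\beta$ may leave $\Omega(\wt\Sigma)$, so $\beta_L$ may exit $\Sigma_L$. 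Since $\beta_L$ is homotopic to $\alpha_L$ rel $\partial\Sigma_L$, the arcs of $\beta_L\cap(L\ssm\Sigma_L)$ are inessential and cut off disks.
\item Enlarging $\Sigma_L$ by these disks corresponds to a transverse proper isotopy of $\Sigma$ that peels disks off $\partial N$.
\end{enumerate}
This is the isotopy of $\Sigma$ that the statement allows. Your Step 2 (bigon slides of $\partial\Sigma$ along the top arc) addresses a different issue and presupposes the unproved Step 1.
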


\begin{proof}
An arc in the boundary of the boundary compressing disk $D$ lies in either $\partial_+N$ or $\partial_- N$ and for the sake of the argument we assume it lies in $\partial_+N$. The other case is similar. We let $\alpha_+ \subset \partial N$ and $\alpha_\Sigma \subset \Sigma$ be the arcs composing the boundary of $D$, and we denote the multicurve $\partial \Sigma \cap \partial_+ N$ by $j$. 

Let $\wt N$ be a copy of the universal cover of $N$ in $\wt M$, whose positive boundary is denoted by $\partial_+ \wt N$ (covering  $\partial_+N$), and let $\wt D$ be a lift of $D$ to $\wt N$. The boundary of $\wt D$ is composed of an arc $\wt \alpha_+$ in $\partial_+ \wt N$ and an arc $\wt \alpha_\Sigma$ that lies in a component $\wt \Sigma$ of the preimage of $\Sigma$. Since $\alpha_\Sigma$ is an essential arc of $\Sigma$ and $\Sigma$ is incompressible by \Cref{lem:incompressible}, $\wt \alpha_\Sigma$ joins distinct boundary components $\wt j_1$ and $\wt j_2$ of $\partial \wt \Sigma$. 
If we consider the projections to the flow space $\orb$,  $\Omega(\wt{j_1})$ and $\Omega(\wt{j_2})$ are contained in $\Omega(\partial_+ \wt N)$ and $\Omega(\wt{\Sigma})$, and in fact $\Omega(\wt{j_1})\cup\Omega(\wt{j_2})$ are in the frontier of $\Omega(\wt{\Sigma})$.

Now spin $\Sigma$ along $\partial N$ to get an infinite-type transverse surface $L$ (see the discussion at the start of \Cref{sec:trans_surf}). We can arrange the induced copy $\Sigma_L$ of $\Sigma$ in $L$ to be obtained from $\Sigma$ by isotoping a small neighborhood of $\partial\Sigma$ inside $N$ along flowlines. There is a lift $\wt{\Sigma_L}$ corresponding to $\wt{\Sigma}$, and a lift $\wt{L}$ of $L$ containing $\wt{\Sigma_L}$. Note that we have $\Omega(\wt{\Sigma})= \Omega(\wt{\Sigma_L}) \subset \Omega (\wt L)$. 

As we see in the proof of \Cref{lem:incompressible}, $\Omega(\wt{L})$ is bounded by leaf slices of the stable/unstable foliations. 
The same argument to prove \Cref{lem:2-shadows} applies here to show that $\Omega(\wt{L})\cap\Omega(\partial_+ \wt N)$ is connected. Therefore, there is an arc $\beta$ in $\Omega(\wt{L})\cap\Omega(\partial_+ \wt N)$ connecting $\Omega(\wt{j_1})$ and $\Omega(\wt{j_2})$. Although we cannot guarantee that $\beta$ is totally contained in $\Omega(\wt \Sigma)$, we can assume that neighborhoods of its endpoints are contained in $\Omega(\wt \Sigma)$ because $\Omega(\wt j_1)$ and $\Omega(\wt j_2)$ each separate $\Omega(\wt L)$. 

Now project $\beta$ to a path $\beta_+ \subset \partial_+ N$ and a path $\beta_L \subset L$ and note that $\beta_L$ flows to $\beta_+$ in the sense that there is an immersed rectangle $R = I \times [0,1]$ in $N$ so that $\beta_L$ is the image of $I \times \{0\}$, $\beta_+$ is the image of $I \times \{1\}$, and segments $t \times [0,1]$ maps into flowlines for each $t \in I$. By construction, $\beta_+  \subset \partial_+ N$ has endpoints on $j$ and is homotopic to $\alpha_+$ while keeping endpoint on $j$ (since their lifts to $\partial_+ \wt N$ join the same components of the preimage of $\wt j$). Similarly, $\beta_L$ has endpoints on $\partial \Sigma_L$ and is homotopic while keeping endpoints in $\partial \Sigma_L$ to the arc $\alpha_L \subset \Sigma_L$ that flows to $\alpha_\Sigma \subset \Sigma$ under the identification of $\Sigma_L$ with $\Sigma$. 

As in the discussion preceding \Cref{lem:saturated-annulus}, the immersed rectangle $R$ can be resolved along flow segments, this time by discarding trivial product flow annuli, giving embedded arcs $\beta_+ \subset \partial_+ N$ and $\beta_L \subset L$ that have the same properties discussed above. If $\beta_L$ were entirely contained in $\Sigma_L$, then $R$ would induce the desired product flow disk after sliding $\Sigma_L$ back up to $\Sigma$ along the flow. However, this may not be the case, and so we may need to isotope $\Sigma$ as follows.

\begin{figure}[h]
    \centering
    \includegraphics[width= .5\textwidth]{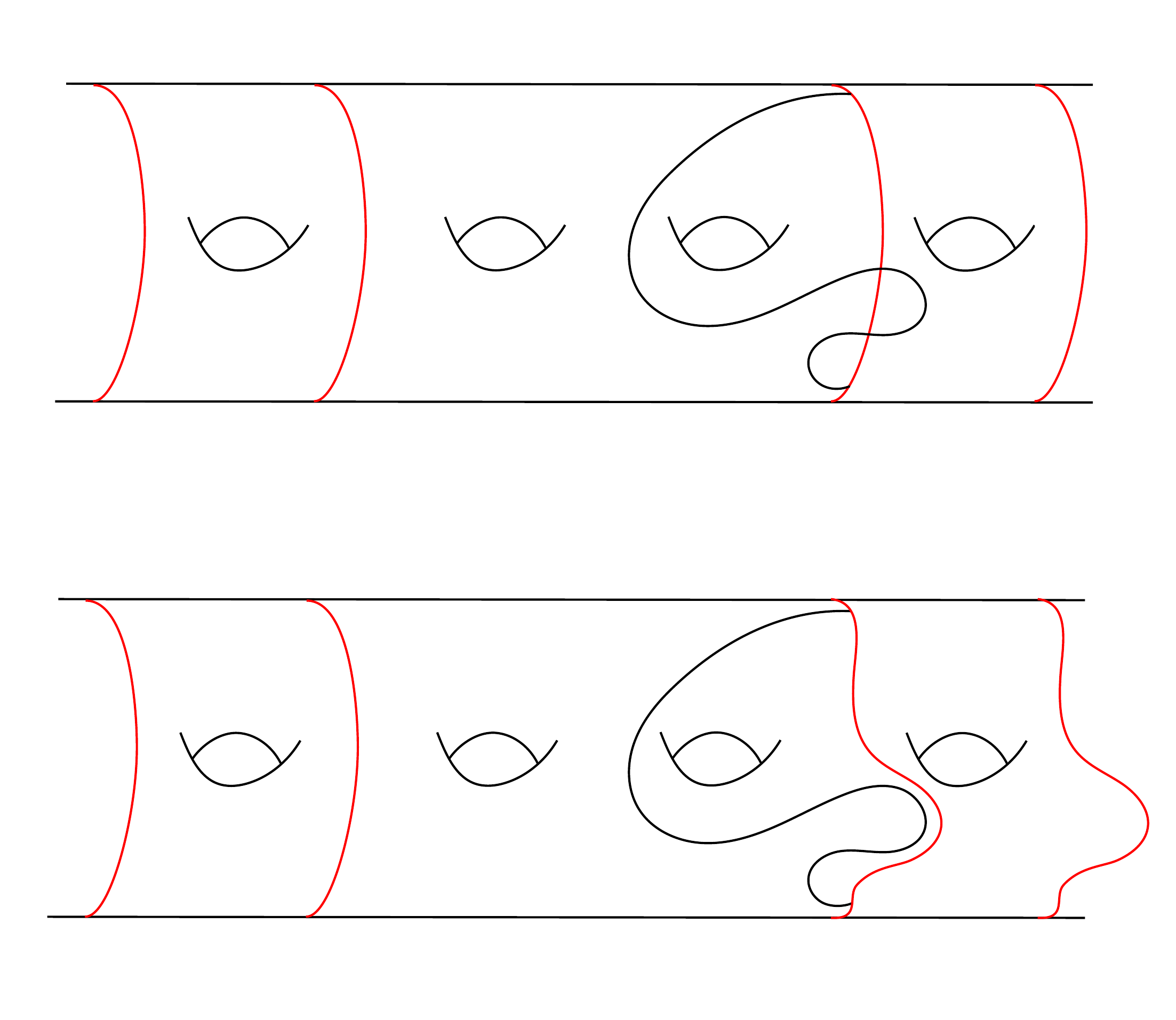}
    \caption{Adjusting $\Sigma_L$ to include $\beta_L$.}\label{fig:junc}
\end{figure}

Since $\alpha_L$ is in $\Sigma_L$ and $\beta_L$ is homotopic to $\alpha_L$ while keeping endpoints along $\partial \Sigma_L \subset L$, the arcs of
$\beta_L\cap(L\ssm \Sigma_L)$ are homotopically trivial in $L \ssm  \Sigma_L$. In other words, these arcs lie in a disjoint union of disks of $L$ that meet the boundary of $\Sigma_L$ along arcs. Let $\Sigma'_L$ be the (isotopic in $L$) subsurface obtained by adding these disks to $\Sigma_L$ (see \Cref{fig:junc}). The effect on the original $\Sigma$ is to enlarge it to a new properly embedded surface $\Sigma'$ by peeling off disks from $\partial_+ N$ (the flow images of the original disks in $L$) so that the image of $\beta_L$ is now contained in $\Sigma'$ and $R$ induces the boundary compressing disk as discussed above. 
Since $\Sigma$ can be properly isotoped to $\Sigma'$ while keeping it transverse, this completes the proof.
\end{proof}

\section{Intersection bounds}
\label{sec:int_bounds}

As before, we begin with an almost pseudo-Anosov flow $\varphi$ on a closed hyperbolic manifold $M$ and a closed transverse surface $S$. In this section, we do not assume $\phi$ to be minimally blown up to be transverse to $S$. The cut manifold $N = M \cut S$ has an induced semiflow $\phi_N$ and boundary $\partial N = \partial_+ N \sqcup \partial_- N$.

In this section, we prove the following:
\begin{proposition}\label{prop:intersection}
Suppose that $\Sigma \subset N$ is properly embedded and transverse to $\varphi$. 
If $\Sigma$ is boundary incompressible, then the number of intersections between principal
components of $c^u$ and $\partial \Sigma$ on $\partial_+N$ is bounded by a constant depending only on $|\chi(\Sigma)|$.

The same holds for $c^s$ after replacing $\partial_+N$ with $\partial_-N$. 
\end{proposition}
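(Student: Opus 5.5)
Fix a principal unstable curve $c_1 \subset c^u_p$ on $\partial_+N$. By definition there is a closed orbit $\omega \subset N$ and a subannulus $B \subset A^u_\omega$ with $\partial B = \omega \cup c_1$ and $B \subset N$; backward flowlines in $B$ starting on $c_1$ stay in $N$ and accumulate on $\omega$. The plan is to study the intersection $\Sigma \cap B$ and convert each point of $\partial\Sigma \cap c_1$ into a piece of topology of $\Sigma$ that costs Euler characteristic, using boundary incompressibility to rule out inessential configurations.

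\textbf{Step 1: structure of $\Sigma \cap B$.} Since $\Sigma$ is transverse to $\varphi$ and $B$ is a union of flowlines, $\Sigma \cap B$ is a $1$-manifold in $B$ transverse to the flowlines. Each component is either a closed curve isotopic to the core of $B$ (it must cross every flowline exactly once locally, so it winds around), or an arc. An arc has endpoints on $\partial B$, and cannot end on $\omega$ (interior to $N$, not on $\partial N$), so arcs have both endpoints on $c_1$. Transversality forces each arc to be monotone along the flow direction, but flowlines in $B$ run from $\omega$ (in backward time) to $c_1$ and hit $c_1$ only once, so an arc with both endpoints on $c_1$ would have to reverse direction relative to the flow, which is impossible. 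Hence there are no arcs, and points of $\partial \Sigma \cap c_1$ can only come from the way $\Sigma\cap B$ approaches $c_1$. Refining: each point $x\in \partial\Sigma\cap c_1$ lies on a flowline segment in $B$ whose backward orbit stays in $N$; following $\Sigma \cap B$ from $x$ we get a spiral leaf of $\Sigma\cap B$ accumulating on $\omega$, unless it closes. I expect the correct picture is that near $c_1$ the curves $\Sigma \cap B$ are parallel copies of closed curves winding around $B$, and each point of $\partial\Sigma \cap c_1$ corresponds to a distinct sheet of $\Sigma$ crossing $B$ near $c_1$.

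\textbf{Step 2: counting sheets via the spun surface.} Spin $\Sigma$ around $\partial N$ to get $L$ as in \Cref{sec:trans_surf}, with the essential subsurface $\Sigma_L \subset L$. In $\wt M$, lift $B$ to a strip $\wt B$ and consider the shadow: $\Omega(\wt B)$ is the unstable half-leaf $r$ at the fixed point $p$ of $\gamma$ (the element representing $\omega$). Intersections of $\partial\Sigma$ with $c_1$ correspond to $\langle\gamma\rangle$-orbits of components of $\Omega(\wt \Sigma)$-frontier crossings of $r$. The key claim is that each such crossing yields, in $\Sigma$, an essential arc or an essential closed curve freely homotopic into $\omega$ (equivalently, since $\Omega(\wt\Sigma)$ is bounded by leaf slices and cannot contain $p$ by \Cref{rmk:outside}, $\gamma$ must stabilize a frontier leaf of $\Omega(\wt\Sigma_L)$). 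Boundary incompressibility together with \Cref{lem:boundary-compressions} is used to show that distinct intersection points give non-parallel essential arcs of $\Sigma$ running from $\partial\Sigma\cap\partial_+N$ around $B$: if two were parallel, the rectangle between them would, after the isotopy of \Cref{lem:boundary-compressions}, produce a product flow disk, i.e.\ a boundary compression, contradiction.

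\textbf{Step 3: conclusion and main obstacle.} A compact surface with $|\chi(\Sigma)|$ fixed has at most $3|\chi(\Sigma)|$ (plus a constant handling annuli/disks, which incompressibility and transversality exclude) pairwise disjoint non-parallel essential arcs, and likewise a bounded number of boundary components; this bounds the number of intersection points with each principal curve, and the number of principal curves themselves is at most $3|\chi(S)|$ — but to get dependence only on $|\chi(\Sigma)|$ I would instead bound total intersections directly by the total arc count over all $c_1$ simultaneously, the arcs for different principal curves being disjoint. The stable case on $\partial_-N$ is symmetric by reversing the flow. The main obstacle is Step 2: making rigorous that distinct points of $\partial\Sigma\cap c^u_p$ give disjoint, pairwise non-parallel essential arcs. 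I would attack it in the flow space, using that $r$ crosses the leaf-slice boundary of $\Omega(\wt L)$ and that principality (backward orbits trapped in $N$) forbids $\Sigma$ from containing a product flow disk between consecutive crossings.
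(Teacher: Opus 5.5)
Your Step 1 is incorrect, and the error propagates. Transversality to $\varphi$ only says that $\Sigma\cap B$ is transverse to the flowlines of $B$. In the product structure $B\ssm\omega\cong c_1\times(-\infty,0]$ with vertical flowlines, this makes each component locally a graph over $c_1$: it is monotone \emph{along} $c_1$, with no monotonicity in the flow direction. So an arc with both endpoints on $c_1$ is perfectly compatible with transversality; think of a graph dipping into $B$ and meeting $c_1$ once with each sign. Arcs can also end on $\omega$: $\omega$ is part of $\partial B$ even though it lies in the interior of $N$, and $\Sigma$ may cross $\omega$ transversally. Your conclusion ``there are no arcs'' would force $\partial\Sigma\cap c_1=\emptyset$, and closed curves of $\Sigma\cap B$ can never produce points of $\partial\Sigma\cap c_1$.

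The correct picture (\Cref{lem:int_sig}) genuinely uses the hypotheses. An outermost arc with both ends on $c_1$ either cuts off a product flow disk, contradicting boundary incompressibility, or can be removed by an isotopy decreasing $|\partial\Sigma\cap c^u|$. After that, every arc of $\Sigma\cap B$ runs from $c_1$ to $\omega$. Hence all crossings of $\partial\Sigma$ with $c_1$ have the same sign and $\#(\partial\Sigma\cap c_1)\le\#(\Sigma\cap\omega)$. For the same reason your appeal to \Cref{rmk:outside} in Step 2 is misplaced. That remark concerns the closed surface $S$, whereas here $\Sigma$ necessarily crosses $\omega$, so shadows of lifts of $\Sigma$ do contain fixed points of $\gamma$.

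Step 2 then has no working mechanism. Arcs of $\Sigma\cap B$ are not properly embedded arcs of $\Sigma$, since one end is an interior point of $\Sigma$ on $\omega$. A rectangle between two parallel arcs of $\Sigma$ lies \emph{in} $\Sigma$, so it is not a boundary compressing disk, and \Cref{lem:boundary-compressions} says nothing about it. Boundary incompressibility by itself cannot bound $\#(\Sigma\cap\omega)$; the missing ingredient is the dynamics of the laminations $\Lambda^\pm$.

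The paper proceeds as follows:
\begin{enumerate}
\item Spin $\Sigma$ to $L$. The rays of $\lambda^+=\Lambda^+\cap L$ based at the points of $\omega\cap L$ are principal, and each meets $\partial\Sigma_L$ at most once (\Cref{one_int}, using the single sign of crossings). This reduces the count to the number of such periodic points.
\item By \Cref{frontier_leaves}, each such point lies on a frontier leaf slice $l$ of the \emph{stable} lamination $\lambda^-$. The essential arcs to count are the arcs $l_\Sigma\subset\Sigma_L$ of these stable leaves through the periodic points.
\item Each $l_\Sigma$ is essential. Otherwise the unstable ray at $p$ is trapped in a disk bounded by $l_\Sigma$ and $\partial_-\Sigma_L$, which is impossible since $\lambda^+\cap\cU^-=\emptyset$ and there are no $\lambda^\pm$-bigons.
\item No three of the $l_\Sigma$ are parallel. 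If $l_\Sigma,l'_\Sigma$ cobound a strip with interior in $\cU^+$, the principal ray from $p$ into the strip must exit through $l'$. It does so at a point of $\cU^+$ lying on a leaf of $\lambda^-$, whose forward orbit would then both stay in $N$ and leave it.
\end{enumerate}
Only then does your Step 3 count of disjoint non-parallel essential arcs give a bound depending only on $|\chi(\Sigma)|$.
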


We begin with some setup.

The foliations $\mc F^{s/u}$ of $M$ restrict to foliations $\mc F^{s/u}_N$ of $N$ and, following \cite[Section 5]{landry2023endperiodic}, these foliations contain singular sublaminations $\Lambda^+ \subset \mc F^u_N$ and $\Lambda^- \subset \mc F^s_N$ defined as follows. We remark however that our notation is different from that of \cite{landry2023endperiodic}, where $\Lambda^\pm$ are denoted $W_N^\pm$.

As a subspace of $N$, $\Lambda^+$ is the union of all $\varphi_N$-orbits which do not meet $\partial_-N$, and $\Lambda^-$ is the union of all $\varphi_N$-orbits which do not meet $\partial_+N$. Hence, the components of the intersection $\Lambda^+ \cap \Lambda^-$ are exactly the orbits of $\varphi$ that are contained in $N$. In fact, as in \cite[Proposition 5.6]{landry2023endperiodic}, $\Lambda^\pm$ are (singular) sublaminations of $\mc F^{u/s}_N$. The leafwise description of $\Lambda^\pm$ is given by \cite[Lemma 5.4]{landry2023endperiodic}: if $H$ is a leaf $\mc F^u_N$, then $H \cap \Lambda^+$ is either empty or all of $H$, after possibly removing any periodic open half-leaves of $H$ 
that meet $\partial_- N$. (Note that each such half-leaf is the intersection of $H$ with a blowup annulus whose orbits approach the periodic orbit of $H$ in forward time.)
Moreover, each leaf of $\Lambda^+$ is of this form.
The same leafwise description holds for $\Lambda^-$, after replacing $u$ with $s$ and $+$ with $-$.
\smallskip

Let $c$ be a component of $c^u$ on $\partial_+ N$. Then either $c$ is a component of $\Lambda^+ \cap \partial_+ N$ or there is a properly embedded annular leaf $A$ of $\mc F^u_N$ for which $\partial_+ A = c^u$ and $\partial_-A \subset \partial_ -N$ where flow segments in $A$ go from $\partial_- A$ to $\partial_+ A$. The annulus $A$ is obtained by flowing points on $c$ backwards into $N$; since these points are not in $\Lambda^+$, they each exit through $\partial_- N$. Hence, referring to \Cref{sec:principal}, the closed leaves of $\Lambda^+ \cap \partial_+ N$ are precisely $c_p^u$, the principal unstable multicurve. 

Similarly, $c_p^s$ is the sublamination of closed leaves of  $\Lambda^- \cap \partial_- N$.

\smallskip
We classify the closed leaves of $\Lambda^+ \cap \partial_+ N$ as follows:

\begin{lemma}[Principal (un)stable multicurves] \label{frontier_leaves}
Each closed leaf of $\Lambda^+ \cap \partial_+ N$ is the boundary of a periodic half-leaf of $\Lambda^+$ whose closed orbit lies in a frontier leaf of $\Lambda^-$. Moreover, this association induces a bijection between 
the closed leaves of $\Lambda^+ \cap \partial_+ N$ and the isolated sides of frontier leaves of $\Lambda^-$. 
\end{lemma}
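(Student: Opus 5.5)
Let $c$ be a closed leaf of $\Lambda^+ \cap \partial_+ N$. The plan is to flow $c$ backwards and read off the half-leaf and its orbit from $\Lambda^+ \cap \Lambda^-$.

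\textbf{Step 1: the periodic half-leaf.} Let $H$ be the leaf of $\mc F^u_N$ containing $c$, so that $c \subset H \cap \Lambda^+$. By the discussion preceding the lemma, the backward $\varphi_N$-orbit of each point of $c$ stays in $N$. The curve $c$ is a closed leaf of $\mc F^u_S$ and is an essential curve on $S$. By the construction of $c^u$ in \Cref{sec:transvese_shadow} (see also \Cref{lem:identify-multicurves}), $c$ is the intersection of $S$ with an unstable periodic half-leaf based at a closed orbit $\omega$. The backward flow of $c$ accumulates on $\omega$, so $c$ and $\omega$ cobound a subannulus $A_c \subset H$. This is exactly the principal picture of \Cref{sec:principal}, with $A_c \subset \Lambda^+$.

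\textbf{Step 2: $\omega$ lies in a frontier leaf of $\Lambda^-$.} Since $\omega \subset N$ is an orbit contained in $N$, we have $\omega \subset \Lambda^+ \cap \Lambda^-$. Let $G$ be the leaf of $\mc F^s_N$ through $\omega$, which is a leaf of $\Lambda^-$. The half-leaf $A_c$ emanates from $\omega$ into one complementary quadrant side of $G$. Points of $A_c \ssm \omega$ flow forward to $c \subset \partial_+N$, so they are not in $\Lambda^-$. Next, take any stable leaf $G'$ near $G$ on the side of $A_c$. Near $\omega$, such a $G'$ meets $A_c$ in a point of $A_c \ssm \omega$. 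By the leafwise description (\cite[Lemma 5.4]{landry2023endperiodic}, in the form recalled before the lemma), a leaf of $\mc F^s_N$ is either in $\Lambda^-$ entirely, up to removing periodic half-leaves, or disjoint from it. Since the intersection point exits through $\partial_+N$, $G'$ is not in $\Lambda^-$. Hence the side of $G$ facing $A_c$ is an isolated side, and $G$ is a frontier leaf. Blowup half-leaves need a little care here, but they are exactly the removed half-leaves in the leafwise description.

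\textbf{Step 3: the bijection.} We define the map $c \mapsto (G, \text{side facing } A_c)$.

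For surjectivity, start from an isolated side of a frontier leaf $G$ of $\Lambda^-$. Because $\Lambda^-$ is a finite-depth-type lamination in the compact manifold $N$, a frontier leaf has a periodic orbit $\omega$ on the isolated side; this follows from the structure results in \cite[Section 5]{landry2023endperiodic}. The unstable half-leaf at $\omega$ pointing into that side lies in $\Lambda^+$. Flowing forward, its points cannot stay in $N$ forever, since otherwise they would lie in $\Lambda^-$, contradicting isolation. So the half-leaf meets $\partial_+N$, and its first intersection is a closed leaf of $\Lambda^+ \cap \partial_+ N$. This leaf is closed because it is the boundary of an annulus whose orbits are asymptotic to $\omega$.

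For injectivity, two distinct curves $c, c'$ on the same isolated side would give two unstable half-leaves at orbits of $G$ into the same complementary region. I would argue that the isolated side corresponds to a single prong or half-leaf at a single periodic orbit, using the local product structure together with the uniqueness in \cite[Proposition 7.8]{landry2025transverse}. Then $c = c'$, because each half-leaf meets $\partial_+N$ first along a single curve.

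\textbf{Main obstacle.} I expect the hardest part to be Step 3: showing that each isolated side contains exactly one periodic orbit with a well-defined unstable half-leaf. This requires either existence of periodic orbits on the boundary leaves of $\Lambda^-$ or a lift to the orbit space. If the general lamination argument fails, I would pass to $\wt N$ and $\orb$ and use that frontier leaf slices of shadows are periodic with $\Z$ stabilizer, as in \Cref{sec:transvese_shadow}.
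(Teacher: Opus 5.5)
Your proposal is correct and follows the same route as the paper. You flow $c$ backward to the closed orbit $\omega$, get isolation of the corresponding side of the $\Lambda^-$ leaf through $\omega$ from the fact that the other orbits of the half-leaf exit through $\partial_+N$, and for the converse use periodicity of frontier leaves \cite[Proposition 5.6]{landry2023endperiodic} and flow the unstable half-leaf on the isolated side out to $\partial_+N$. The paper simply takes uniqueness of that half-leaf for granted; it comes from the alternating prong structure at $\omega$, not from \cite[Proposition 7.8]{landry2025transverse}.

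One step in Step 2 needs repair. If $A_c$ is half of a blowup annulus, as happens whenever $S$ crosses a blowup annulus in its core curve, then $A_c$ lies in the stable leaf $G$ itself and is one of the removed half-leaves. Nearby stable leaves on that side then run alongside $A_c$ rather than crossing it, so isolation should instead come from the openness of the set of orbits that cross $\partial_+N$.
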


Here, we recall that a frontier leaf of $\Lambda^\pm$ is one that is isolated to at least one of its sides.

An analogous statement also holds for closed leaves of $\Lambda^- \cap \partial_- N$.

\begin{proof}
Let $c$ be a closed leaf of $\Lambda^+ \cap \partial_+ N$. By construction, $c$ is contained in a leaf $W$ of $\mc F^u_N$
and the backward orbits starting at points along $c$ converge to a closed orbit $\omega\subset W$. Moreover, these backward orbits do not exit $N$ (by definition of $\Lambda^+$) and so we conclude that there is a periodic half-leaf $\ell$ of $\Lambda^+$ such that $\omega$ is the closed orbit of $\ell$ and $\ell \cap \partial_+N = c$. 

Let $\ell^-$ be the leaf of $\Lambda^-$ containing $\omega$. Since orbits of $\ell$ other than $\omega$ cross $c$, and hence exit $N$, in forward time, we have that $\ell$ lies to an isolated side of $\ell^-$. This makes $\ell^-$ a frontier leaf of $\Lambda^-$. 

In the other direction, let $\ell^-$ be a frontier leaf of $\Lambda^-$. By \cite[Proposition 5.6]{landry2023endperiodic}, $\ell^-$ is periodic, i.e. contains a closed orbit $\omega$. (The statement there is for a special case, but the argument holds in general. One could also directly appeal to the more general statement in  \cite[Proposition 8.7]{landry2025transverse} to obtain the conclusion.) Hence, we can let $\ell$ be the half-leaf of $\Lambda^+$ that contains $\omega$ and begins on the  
isolated side of $\ell^-$. Since all orbits of $\ell$ starting near (but not on) $\omega$ exit $\partial_+N$, we have that $\ell \cap \partial_+N$ is a closed component $c$ of $\Lambda^+ \cap \partial_+ N$. This completes the proof.
\end{proof}

We now fix a boundary incompressible, properly embedded surface $\Sigma \subset N$ that is transverse to $\phi$, as in \Cref{prop:intersection}. Note that $\Sigma$ is naturally cooriented by $\varphi$ and so $\partial \Sigma$ has an induced coorientation in $\partial N$.

We will be interested in understanding the intersections $\partial \Sigma \cap c^{u} \subset \partial_+ N$ and $\partial \Sigma \cap c^{s} \subset \partial_- N$, and so we assume that the surface $\Sigma$ is chosen in its isotopy class, among surfaces transverse to $\varphi$, as to minimizes the number of intersections between $\partial \Sigma$ and  $c^{u/s}$ as curves of $\partial_\pm N$.
In fact, from 
\Cref{lem:int_sig}
we will conclude that after this isotopy, $\partial \Sigma$ is in minimal position with respect to $c^{s/u}$, but we will not need this fact a priori.

To simplify terminology we make the following definitions. Suppose that $c$ is a component of $c^u$ (the $c^s$ case is similar).  If $c \subset \Lambda^+ \cap \partial_+ N$, i.e. $c$ is principal, let $H \subset \Lambda^+$ be the periodic half-leaf with $c = H \cap \partial_+ N$, and let $\omega_c$ be the closed orbit of $\phi_N$ that cobounds $H$ with $c$.
Otherwise, let $H$ be the product flow annulus that $c$ cobounds. In either case, we call $H$ the \emph{supporting annulus} for $c$. 

\begin{lemma}[Intersecting $\Sigma$] \label{lem:int_sig}
Let $c$ be a component of $c^{s/u}$ and let $H$ be its supporting annulus. If $\partial \Sigma$ intersects $c$, then $H \cap \Sigma$ consists of arcs that run between distinct boundary components of $H$. Hence, if $c$ is principal, then $\Sigma$ intersects $\omega_c$.

Moreover, $\partial \Sigma$ is in minimal position with respect to $c^{s/u}$.
\end{lemma}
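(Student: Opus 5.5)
Work with $c\subset c^u$ on $\partial_+N$; the $c^s$ case is symmetric. The plan is to study $H\cap\Sigma$ as a $1$-manifold in the supporting annulus $H$. Since $\Sigma$ is transverse to $\varphi$ and $H$ is a union of flow segments (or flowlines accumulating on $\omega_c$), we can perturb so $\Sigma\pitchfork H$. Then $\Sigma\cap H$ is a properly embedded $1$-manifold in $H$ that is transverse to the induced flow on $H$. Its endpoints lie on $c=\partial_+H$, and in the product case also on $\partial_-H\subset\partial_-N$. In the principal case $H$ is an open half-leaf with closed orbit $\omega_c$ at its other end, so arcs may also escape toward $\omega_c$.

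First rule out closed components. A closed curve in $H$ transverse to a flow that goes from one end of the annulus to the other must be essential, so it is a core curve. But $\Sigma$ is cooriented by $\varphi$, and every flowline in $H$ crosses $\Sigma$ at most once locally with the same sign. If a core curve of $H$ lies in $\Sigma$, the forward flow from it reaches $c\subset\partial_+N$ without meeting $\Sigma$ again. Then $c$ is isotopic, through flow segments, into $\Sigma$, and the annulus between them can be used to isotope $\Sigma$ near its boundary to reduce $|\partial\Sigma\cap c^u|$. This contradicts the minimality assumption; alternatively the curve is simply disjoint from $\partial\Sigma$, so this case is harmless and can be discarded.

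Next rule out arcs with both endpoints on the same boundary component of $H$. Such an arc $a$ together with a subarc $b\subset c$ bounds a disk $B\subset H$ foliated by flow segments. Take $B$ innermost, and push $\Sigma$ along $B$ past $c$. Concretely, the flow segments in $B$ give a product flow disk, and sliding $\partial\Sigma$ across it on $\partial_+N$ removes the two intersection points of $\partial\Sigma\cap c$ while keeping $\Sigma$ transverse, exactly as in the adjustment at the end of \Cref{lem:boundary-compressions}. This contradicts minimality. The key point is transversality: on the arc $a$ the flow crosses $\Sigma$ in one direction, and this forces the endpoints of $a$ to have opposite signs along $c$. It also forces $B$ to lie on the side where the flow exits into $\partial_+N$, so the slide is along flow and preserves transversality. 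An arc with both ends on $\partial_-H$ is handled the same way.

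Hence every arc runs between distinct boundary components of $H$. In the principal case there is only one boundary component $c$, so the only possibility is arcs that go from $c$ back toward $\omega_c$. Such an arc is properly embedded in the half-open annulus; since $\Sigma$ is compact, it cannot limit onto $\omega_c$ without meeting it, so $\Sigma\cap\omega_c\neq\emptyset$. (Equivalently, in the closure $H\cup\omega_c$ the arcs run from $c$ to $\omega_c$.)

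For minimal position, use the bigon criterion. If $\partial\Sigma$ and $c$ bound a bigon on $\partial_+N$, the arc of $\partial\Sigma$ in the bigon determines, through $H$, an arc of $\Sigma\cap H$. Tracking the arc of $\Sigma\cap H$ from each intersection point at a bigon corner, we get arcs to the other boundary component. These arcs cobound with the bigon a disk in $N$ that, by incompressibility of $\partial N$ and \Cref{lem:incompressible}, yields a transverse isotopy removing the bigon. This again contradicts minimality.

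I expect the main obstacle to be the last step. Relating a bigon on $\partial_+N$ to a sliding move that keeps $\Sigma$ transverse needs care: the corner arcs of $\Sigma\cap H$ run to $\partial_-H$ or $\omega_c$, not back to $c$. To handle this I would first try the lift to $\wt N$ and the orbit space. There, a bigon means a lift $\wt j$ of a component of $\partial\Sigma$ crosses a lift $\wt c$ twice. Projecting to $\orb$, $\Omega(\wt c)$ is an unstable segment and $\Omega(\wt j)$ lies in $\fr\Omega(\wt\Sigma)$. Since leaves meet in connected sets, as in \Cref{lem:2-shadows}, two crossings are impossible.
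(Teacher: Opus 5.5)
\textbf{Gap: boundary incompressibility is never used.} Your proposal never uses the standing hypothesis that $\Sigma$ is boundary incompressible, and the central step fails without it. An innermost arc $a$ of $\Sigma\cap H$ with both endpoints on $c$ cuts off a product flow disk $B\subset H$ with $\partial B=a\cup b$, $b\subset c$.

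Suppose $a$ is inessential in $\Sigma$, i.e.\ it cobounds a disk $D_\Sigma\subset\Sigma$ with an arc of $\partial\Sigma$. Then replacing $D_\Sigma$ by $B$ (an isotopy, by irreducibility of $N$) and perturbing removes two points of $\partial\Sigma\cap c$. This is your slide, and it is the paper's surgery.

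Now suppose $a$ is essential in $\Sigma$. Then $B$ is a boundary compressing disk, and pushing $\Sigma$ across it is a boundary compression (it cuts $\Sigma$ along $a$), not an isotopy. The endpoints of $a$ may lie on components of $\partial\Sigma$ that are already in minimal position with $c$, and then no isotopy removes them. The statement really is false here. Take an annular neighborhood in $\partial_+N$ of a curve meeting $c$ essentially and minimally, and push it slightly into $N$ against the flow. The resulting boundary-parallel annulus is transverse and its boundary is in minimal position with $c$. Yet every arc of it in $H$ returns to $c$, and it misses $\omega_c$. The paper closes this case by noting that $B$ is then a product flow disk on an essential arc, contradicting boundary incompressibility. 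The adjustment in \Cref{lem:boundary-compressions} that you cite does something different: it enlarges $\Sigma$ along inessential arcs.

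\textbf{Minimal position: the same hole, plus a second one.} For non-principal $c$, the arcs of $\Sigma\cap H$ starting at the two corners of a bigon $B$ cobound a strip $s\subset H$. If they also cobound a strip in $\Sigma$, the surgery works. Otherwise $s\cup B$, pushed off $\partial_+N$, is a boundary compressing disk, and again only boundary incompressibility gives the contradiction. Incompressibility of $\partial N$ and \Cref{lem:incompressible} do not produce an isotopy.

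For principal $c$, your orbit-space fallback does not work. $\Omega(\wt j)$ is not a leaf, so the fact that leaves meet in connected sets says nothing about how often it crosses $\Omega(\wt c)$. The missing idea is a sign count. By the first part, every arc of $\Sigma\cap H$ leaving $c$ ends on $\omega_c$, where $\Sigma$ crosses only positively. Following the coorientation along each arc then shows that all points of $\partial\Sigma\cap c$ have the same sign, whereas the two corners of a bigon have opposite signs.

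Closed components of $\Sigma\cap H$ need no separate argument: a core curve of $H$ lying in $\Sigma$ would trap the arcs starting on $c$ and force them back to $c$.
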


\begin{proof}
Consider the transverse intersection $H \cap \Sigma$, which by assumption contains arcs meeting $c$. For the first statement, it suffices to show that there are no arcs with both ends on $c$. Supposing otherwise, let $\alpha_\Sigma$ be an arc of intersection with both endpoints on $c \subset \partial_+ N$ that is closest to $\partial_+ N$, in the sense that it cobounds a subdisk $D \subset H$ with an arc of $c$ along $\partial_+N$. 

If $\alpha_\Sigma$ is an essential arc in $\Sigma$, then $D$ is a product flow disk, contradicting that $\Sigma$ is boundary incompressible. Otherwise, $\alpha_\Sigma$ cobounds a disk $D_\Sigma \subset \Sigma$ such that $\partial D_\Sigma \ssm \alpha_\Sigma \subset \partial \Sigma$. Hence replacing $D_\Sigma$ with $D$ produces a new properly embedded surface $\Sigma'$ that is isotopic to $\Sigma$ by irreducibility of $N$. Moreover, since $D$ is foliated by flow segments, after a small perturbation, $\Sigma'$ is transverse to $\varphi_N$ and has two fewer intersection points with $c$ than $\Sigma$ does. This contradicts our assumption on $\Sigma$ and establishes that $H \cap \Sigma$ consists of arcs that run between distinct boundary components of $H$. When $H$ is a half-leaf of $\Lambda^+$, this implies that $\Sigma$ crosses $\omega_c$.

We now use this to show that $\partial \Sigma$ is in minimal position with $c^{s/u}$. Toward a contradiction, suppose that $\partial \Sigma$ cobounds a bigon $B$ with a component $c$ of (say) $c^u$ on $\partial_+ N$. If $H$ is the supporting annulus for $c$, then the argument above shows that $c$ cannot be principal since $\partial \Sigma$ intersects $c$ both positively and negatively (and $\Sigma$ has only positive intersections with $\omega_c$). Hence, $H$ is product flow annulus and each arc of the intersection $\Sigma \cap H$ runs between distinct boundary components of $H$. The intersections arcs determined by the bigon $B$ cobound a strip $s$ in $H$ and if these arcs also determine a strip $D$ in $\Sigma$, then one can do a similar surgery as above to replace $D$ with $s \subset H$ and perturb to remove two intersections of $\partial \Sigma$ with $c^{u}$ on $\partial_+N$. Otherwise, the union $s \cup B$ can be pushed off $\partial_+ N$ to produce a boundary compressing disk for $\Sigma$. This contradicts the assumption that $\Sigma$ is boundary incompressible and completes the proof.
\qedhere

\end{proof}

\smallskip
Let $L$ be the transverse infinite-type surface obtained by spinning $\Sigma$ around $\partial N$ (see the discussion preceding \Cref{lem:saturated-annulus}). Since $L$ is transverse to $\varphi$, there are induced (singular) laminations $\lambda^\pm =  \Lambda^\pm \cap L$.
Essentially by construction, the singular lamination $\lambda^+$ is supported on the complement of the open set $\mc U^-$ of points of $L$ whose backwards orbits cross $\partial_-N$. Similarly, $\lambda^-$ is supported on the complement of the open set $\mc U^+$ whose forward orbits cross $\partial_+ N$. The subsets $\mc U^\pm \subset L$ are called the \emph{positive/negative escaping sets}, respectively.

A half-leaf of $\lambda^\pm$ (also called a \emph{ray} in $L$)
is said to be  \emph{periodic} if it is based at the intersection of $L$ and a closed orbit of $\phi_N$. (Abusing terminology, we call the intersection of $L$ with a closed orbit of $\phi_N$ a \emph{periodic point} of $L$.)
Each periodic ray is either recurrent or it exits compact sets of $L$, in which case we call it \emph{escaping}. We call a periodic escaping ray of $\lambda^\pm$ that has a well-defined first return map under $\varphi$ a \emph{principal ray}.
The next lemma ties the structure of principal rays in $\lambda^\pm$ to periodic half-leaves of $\Lambda^\pm$ that determine the principal unstable/stable curves.

\begin{lemma}[Principal rays of $\lambda^{\pm}$] \label{escaping_leaves}
If $\ell$ is a periodic half-leaf of $\Lambda^+$ such that $\ell \cap \partial_+N$ is a a closed leaf, i.e. a principal unstable curve, then $\ell \cap L$ is a (possibly empty) collection of principal rays
$\lambda^+$. 

Conversely, if $l$ is a principal ray
$\lambda^+$, 
then its flow saturation in $N$ is a periodic half-leaf of $\Lambda^+$ meeting $\partial_+N$ in a principal unstable curve.
\end{lemma}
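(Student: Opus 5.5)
For the forward direction, I would work with the spinning picture directly. Fix a periodic half-leaf $\ell$ of $\Lambda^+$ with closed orbit $\omega$ and $\ell \cap \partial_+ N = c$ a principal unstable curve. Then $\ell$ is a half-open annulus cobounded by $\omega$ and $c$, foliated by backward orbits that accumulate on $\omega$ and never leave $N$.

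First I would check that each component of $\ell \cap L$ is a periodic ray. Since $L$ is transverse to $\varphi$ and $\omega \subset N$, each point of $\omega \cap L$ is a periodic point of $L$. By \Cref{lem:int_sig}, applied to $\Sigma$ and then transported to $L$, each component of $\ell \cap \Sigma$ is an arc running from $\omega$ to $c$. In $L$, the part near $\partial N$ is replaced by the spiraling copies $\partial N \times \{1/n\}$. The copies near $\partial_+ N$ meet $\ell$ in parallel copies of $c$ (up to the cut-and-paste), so each arc continues by spiraling along these copies of $c$ forever. Hence each component of $\ell \cap L$ is a ray based at a point of $\omega \cap L$ that exits every compact set of $L$, since it goes out the end of $L$ spun around $\partial_+ N$. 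So it is escaping and periodic.

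Next I would verify the first return map. The flow restricted to $\ell$ moves points toward $\partial_+ N$ in forward time, and the first return to $L$ along $\ell$ sends the ray to itself. Concretely, the return map on $\omega \cap L$ is the first return along $\omega$, and it extends along $\ell$ because $\ell$ is an annulus foliated by flowlines, each of which crosses the spiraling copies of $c$ successively. If $\Sigma$ does not meet $\omega$, then by \Cref{lem:int_sig} the intersection is empty, which accounts for the ``possibly empty'' clause.

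For the converse, let $l$ be a principal ray of $\lambda^+$, based at a periodic point $p$ on a closed orbit $\omega$. Its flow saturation lies in the leaf of $\Lambda^+$ through $\omega$, and since $l$ is a ray from $p$ it lies in a single half-leaf $h$ at $\omega$. Because $l$ has a well-defined first return map, the saturation of $l$ is invariant under the holonomy along $\omega$, so it is the whole half-leaf $h$, an annulus or Möbius band that is periodic. It remains to show $h$ meets $\partial_+ N$ in a closed leaf. Because $l$ escapes compact sets of $L$, it eventually enters the spun region near some component of $\partial N$. It cannot be $\partial_- N$, since points of $\lambda^+$ have backward orbits staying in $N$, whereas near $\partial_- N$ the backward flow exits. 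So points of $l$ far out have short forward flow segments to $\partial_+ N$. Hence orbits in $h$ off $\omega$ exit $\partial_+ N$, and $h \cap \partial_+ N$ is a closed curve (compact, because of the holonomy invariance), which is then a principal unstable curve by \Cref{frontier_leaves} and the discussion preceding it.

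The main obstacle is the converse step of ruling out that $l$ escapes without its half-leaf actually hitting $\partial_+ N$ in a closed curve, for example by $h$ being a blowup annulus or escaping into $\partial_- N$ spirals. I would handle this using the leafwise description from \cite[Lemma 5.4]{landry2023endperiodic} and the bounded product neighborhood $\partial N \times [0,1]$ used in spinning.
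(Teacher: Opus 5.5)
Your forward direction is the paper's argument (via \Cref{lem:int_sig} and the spinning picture) with more detail. The converse, however, has a genuine gap at the sentence ``Hence orbits in $h$ off $\omega$ exit $\partial_+N$.'' What you have shown is only that points sufficiently far out along $l$ lie in the positive spun region, and so exit through $\partial_+N$ quickly. To conclude that $h\cap\partial_+N$ is a closed curve, you need \emph{every} orbit of $h\ssm\omega$ to exit, i.e.\ to pass through that far-out part of $l$. Nothing you wrote excludes an orbit of $h\ssm\omega$ that meets $l$ only in a bounded initial segment near $p$, for example an orbit of $h$ lying in $\Lambda^+\cap\Lambda^-$, hence contained in $N$ for all time. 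If such orbits existed, $h\cap\partial_+N$ would be a union of open arcs rather than a closed curve. Holonomy invariance does not rule this out, and neither does the leafwise description of $\Lambda^+$, which only gives $h\subset\Lambda^+$ (already known). The same issue leaves your parenthetical ``compact, because of the holonomy invariance'' unsupported. The obstacles you flag at the end (blowup annuli, spiraling into $\partial_-N$) are not where the difficulty lies; you already handled the negative end correctly.

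The missing idea is the one the paper uses: the first return map $f\colon l\to l$ fixes $p$ and is expanding. Indeed, $f$ is a continuous injection of the ray $l$ into itself with no fixed point other than $p$, since a half-leaf contains no other periodic point. Moreover, $f^{-1}$ moves points toward $p$, because backward orbits in the unstable half-leaf $h$ accumulate on $\omega$. So for $x\in l\ssm p$ the iterates $f^n(x)$ move monotonically out along $l$ and cannot converge, since a limit would be a second fixed point. They therefore leave every compact subset of $l$ and enter the positive spun region, so the forward orbit of $x$ exits through $\partial_+N$. Only then do the first exit points of $h\ssm\omega$ form a closed curve cobounding an annulus with $\omega$ in $N$, which is principal by definition.

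There are also two smaller points in your forward direction. First, \Cref{lem:int_sig} assumes that $\partial\Sigma$ meets $c$, so it cannot show $\ell\cap L=\emptyset$ when $\Sigma$ misses $\omega$; in that case $\ell\cap L$ may contain closed curves, such as layer copies of $c$. The lemma is only applied in \Cref{prop:intersection} under that assumption, so you should simply impose it. Second, when $\omega\cap L$ has $k\ge 2$ points, the first return to $L$ permutes the $k$ rays cyclically, so the map you want is the first return of $l$ to itself.
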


In particular, if $l$ is a principal ray of $\lambda^+$ based at the periodic point $p \in L$, then $l \ssm p \subset \mc U^+$.

\begin{figure}[h]
    \centering
    \includegraphics[width= .4\textwidth]{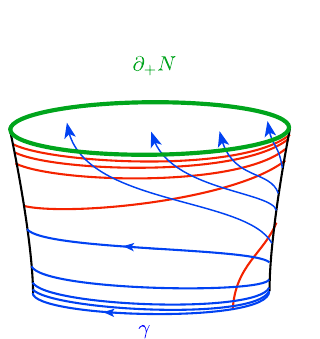}
    \caption{A figure from \cite{landry2023endperiodic} showing a periodic leaf of $\Lambda^+$ that intersects $\partial_+ N$ in a closed curve (a principal unstable curve), and the periodic escaping ray of $\lambda^+$ coming from its intersection with $L$.}\label{fig:periodic-half-leaf}
\end{figure}

\begin{proof}
For the first statement, suppose that $\ell$ is a periodic half-leaf of $\Lambda^+$ such that $\ell \cap \partial_+ N$ is closed. Then $\ell$ is the support annulus for $c = \ell \cap \partial_+ N$ and the claim follows from \Cref{lem:int_sig}

For the converse statement, let $l$ be a principal ray of $\lambda^+$ and let $\ell$ be the half-leaf of $\Lambda^+$ containing $l$. Since the ray $l$ is contained in $\lambda^+$, the first return map $l \to l$ fixes its periodic point and is expansive. Because $l$ is escaping, the forward orbit of every point exits the positive ends of $L$ and hence limits to a point on $\partial_+ N$. From this, we see that $\ell \cap \partial_+N$ is a closed leaf of $\Lambda^+ \cap \partial_+ N$, as required.
\end{proof}

The last lemma is the main way of controlling intersections.

\begin{lemma} \label{one_int}
With $\Sigma$ and $L$ as above, each principal ray of $\lambda^+\subset L$ intersects $\partial\Sigma_L$ no more than once. 
\end{lemma}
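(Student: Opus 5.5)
Here is the plan. Fix a principal ray $l$ of $\lambda^+$ based at a periodic point $p \in L$. By \Cref{escaping_leaves}, its flow saturation is a periodic half-leaf $\ell$ of $\Lambda^+$ whose closed orbit $\omega = \omega_c$ lies in $N$, and $\ell \cap \partial_+N = c$ is a principal unstable curve. The ray $l$ is one component of $\ell \cap L$.

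First I would describe how $\ell$ meets $L$. Since $L$ is obtained from $\Sigma$ by spinning along $\partial N$, the intersection $\ell \cap L$ is built from $\ell \cap \Sigma$ together with the intersections of $\ell$ with the parallel copies $\partial_+N \times \{1/n\}$ (and similarly on the negative side). Each such copy meets $\ell$ in a copy of $c$. By \Cref{lem:int_sig}, if $\partial\Sigma$ meets $c$, then $\Sigma \cap \ell$ consists of arcs running from $\omega$ to $c$, each crossing $\omega$ positively. Cut-and-paste with the parallel copies of $c$ turns each such arc into a ray. The ray starts at a point of $\omega \cap \Sigma$, follows the arc out to a neighborhood of $\partial_+N$, and then spirals along the copies $c \times \{1/n\}$ toward the positive end of $L$. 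The boundary $\partial \Sigma_L$ is the innermost juncture, so along $l$ it is met exactly where the $\Sigma$-arc part of $l$ hands off to the spiraling part. That is a single crossing, at the point corresponding to the endpoint of the arc on $c$.

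Second, I would rule out any further crossings. The key point is that $l \ssm p \subset \mc U^+$, and forward flow along $\ell$ is monotone toward $\partial_+N$, while each junctures curve sits at a definite level between $\Sigma_L$ and the next parallel copy. So after $l$ leaves $\Sigma_L$ it passes only through the spiraling region outside $\Sigma_L$, whose boundary toward $\Sigma_L$ is $\partial\Sigma_L$. Moving outward along $l$ corresponds to moving forward in the flow, so $l$ cannot re-enter $\Sigma_L$. Before that crossing, $l$ lies inside $\Sigma_L$ along a single arc of $\Sigma \cap \ell$, because that arc runs between distinct boundary components of $\ell$ by \Cref{lem:int_sig}. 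If $\partial\Sigma$ is disjoint from $c$ instead, then $l$ is either contained in $\Sigma_L$ or disjoint from it, and there are zero crossings.

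The main obstacle is making the second step rigorous. It requires identifying the intersection of $\ell$ with the oriented cut-and-paste sum precisely, and in particular checking that the resolution at the points of $\partial\Sigma \cap c$ follows the flow direction, so the ray exits outward rather than back-tracking. I would handle this with a local model near $\partial_+N$: in the collar $\partial_+N \times [0,1]$, the sheet $\ell$ is $c \times [0,1]$. In this model, the positively cooriented intersections of $\Sigma$ with $\omega$ and the coorientation of $\partial \Sigma$ together fix the resolution at every crossing with the same sign. Because all crossings have the same sign, the resulting curve in $\ell \cap L$ is monotone in the collar coordinate and meets the innermost level $\partial\Sigma_L$ only once.
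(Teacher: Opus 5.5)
Your argument is correct in substance, and it rests on the same key input as the paper's proof. By \Cref{lem:int_sig}, every arc of $\Sigma\cap\ell$ runs from $\omega$ to $c$, and $\Sigma$ crosses $\omega$ only positively, so all points of $\partial\Sigma\cap c$ have the same sign.

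The routes differ in how this fact becomes a statement about $l$. You build an explicit model in a collar $\partial_+N\times[0,1]$, where $\ell$ is $c\times[0,1]$. You resolve the crossings of $\Sigma\cap\ell$ with the circles $c\times\{1/n\}$ and use sign-coherence to see that the resulting ray is monotone in the collar height.

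The paper argues by parity and never looks inside the collar. Since $p\in\Sigma_L$ and $\partial\Sigma_L$ separates $L$, two crossings of $l$ with $\partial\Sigma_L$ would be an exit followed by a re-entry, i.e.\ crossings of opposite sign. Forward flow carries a neighborhood of the positive junctures in $L$ onto a neighborhood of $\partial\Sigma$ in $\partial_+N$, taking $l\ssm p$ onto $c$ and preserving crossing signs. So $\partial\Sigma$ would cross $c$ with both signs, a contradiction.

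That separation argument disposes of exactly the step you flagged as the main obstacle, and it does not care how $\Sigma$ sits in the collar. Your version needs the collar chosen so that $\Sigma$ meets it only in a collar of $\partial\Sigma$ with monotone height; in exchange you get an explicit picture of $\ell\cap L$.

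If you keep your route, two descriptions should be corrected. First, after leaving $\Sigma_L$ the ray does not simply spiral along the copies of $c$. It alternates between arcs of these circles and short pieces of arcs of $\Sigma\cap\ell$ lying outside $\Sigma_L$, and it hops between different arcs when there are several. The relevant invariant is monotonicity of the collar height, not ``moving forward in the flow''. Second, your case $\partial\Sigma\cap c=\emptyset$ is vacuous: $l$ starts in the compact surface $\Sigma_L$ and escapes, so it must cross $\partial\Sigma_L$.
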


\begin{proof}
Let $l$ be a principal ray of $\lambda^+$ based at the periodic point $p \in L$. Clearly, $p \in \Sigma_L$ and suppose that $l$ intersects $\partial \Sigma_L$ more than once. Since $\partial \Sigma_L$ separates $L$, $l$ crosses $\partial \Sigma_L$ both positively and negatively. Hence, $\partial \Sigma$ crosses the principal unstable curve $c$ associated to $l$ (as in \Cref{escaping_leaves}) both positively and negatively. This contradicts
\Cref{lem:int_sig} because $\Sigma$ is positively transverse to $\varphi$.
\end{proof}

With these tools at hand, we now turn to the proof of the proposition.

\begin{proof}[Proof of \Cref{prop:intersection}]

Let $c$ be a principal component of $c^u$ that intersects $\partial \Sigma$. 
Then $c$ is a component of $\Lambda^+ \cap \partial_+ N$ and we can apply \Cref{frontier_leaves} and \Cref{escaping_leaves}. In particular, let $\ell^+$ be the periodic half-leaf of $\Lambda^+$ with $c = \ell^+ \cap \partial_+ N$.
Then, the intersections of $c$ with $\partial \Sigma$ are in bijection with the intersections between $\ell^+ \cap L$ and $\partial \Sigma_L$ along $L$ (the bijection given by flowing $\partial \Sigma_L$ to $\partial \Sigma$).  Moreover, \Cref{escaping_leaves} implies that the intersection $\ell^+  \cap L = \{ l^+_1, \ldots, l^+_k\}$ is a collection of principal
rays such that $\ell^+$ is the flow saturation of each $l^+_i$. Since each $l^+_i$ intersects each component of $\partial\Sigma_L$ at most once (\Cref{one_int}), it suffices to bound $k$ in terms of $\chi(\Sigma)$.

From \Cref{frontier_leaves}, we see that there is a (periodic) frontier leaf slice $\ell^-$ of $\Lambda^-$ whose closed orbit agrees with that of $\ell$. Hence, $\ell^- \cap L = \{l^-_1, \ldots l^-_k\}$ is a collection of frontier leaf slices of $\lambda^-$ so that the initial point of $l^+_i$ is also the periodic point of $l^-_i$. It now suffices to bound the number of periodic frontier leaf slices of $\lambda^-$.

For a frontier leaf slice $l$, let $l_\Sigma$ be the component of $l \cap \Sigma_L$ that contains the periodic point of $l$. Here, we are using the fact that all periodic points of $L$ lie in $\Sigma_L$.
We will show that at most two frontier arcs of the form $l_\Sigma$ can be parallel in $\Sigma$. 
Otherwise, we can find
 two such frontier slice leaves $l$, $l'$ of $\lambda^{-}$, such that the arcs 
  $l_\Sigma$ and $l'_\Sigma$ 
cobound a strip $s$ whose interior is contained in $\mc U^+$. Note that the two other sides of $s$ lie along arcs of $\partial_- \Sigma_L$, where $\partial_- \Sigma_L$ are the components of $\partial \Sigma_L$ that correspond to the components of $\Sigma \cap \partial_- N$. 
 
Let $p$ be the periodic point of $l$. Since $p \in l_\Sigma$, it lies along the boundary of $s$ and so there is a principal
ray $l^+$
of $\Lambda^+$ based at $p$ and directed into $s$. That $l^+$ is principal follows from  \Cref{frontier_leaves} and \Cref{escaping_leaves}; referring to the beginning of the proof, if $l = l^-_i$, then $l^+ = l^+_i$.
Since $l^+$ does not cross $\partial_- \Sigma_L$, it must cross $l'$ along the boundary of $s$ at a point $p' \in l'$. On the one hand, the backwards orbit of $p'$ (along the flow saturation of $l^+$) must meet $L$ in the interior of $s$. Since the interior of $s$ is contained in $\mc U^+$, this means that the forward orbit along $p'$ exits $\partial_+N$. But this contradicts that $l'$ is a leaf of $\lambda^-$ and so the forward orbit through any of its points stays in $N$. 
 
Since there is an upper bound on the number of disjoint nonparallel essential arcs in $\Sigma$ depending only on $\chi(\Sigma)$, the proof is completed by showing that each $l_\Sigma$ is essential. Otherwise, $l_\Sigma$ cobounds a disk $D \subset \Sigma_L$ with an arcs $a \subset \partial_-\Sigma_L$. But since $l_\Sigma$ is an arc of a leaf of $\lambda^-$ containing a periodic point $p$, the leaf $l^+$ of $\lambda^+$ through $p$ must contain a ray $r$ based at $p$ that meets the interior of $D$. This, however, is a contradiction: $r$ cannot leave $D$ through $l_\Sigma$ because that would create a bigon between leaves of $\lambda^\pm$, and cannot cross $a$ because $\lambda^+$ is disjoint from $\mc U^-$.
\end{proof}

\section{Geometric arguments: broken windows and bounded length curves}
\label{sec:geom}

Before we state the main result of this section, we give a brief overview of JSJ theory of 3-manifolds and pleated surfaces in hyperbolic 3-manifolds.

\subsection{JSJ-decomposition}\label{sec:jsj}
 We refer the readers to \cite{JacoShalen} and \cite{JohJSJ} for the complete theory of JSJ-decompositions of 3-manifolds. The formulation we use here is from \cite{Agolminvol} (see also \cite[Section 2.2]{brock2020windows} which uses the same terminology that is used here).
 For our purpose, consider a compact orientable irreducible
 $3$-manifold $N$ with non-empty incompressible boundary, such that the interior of $N$ admits a hyperbolic structure. By the theory of JSJ decomposition, there exists a canonical collection $\A$ of disjoint essential nonparallel annuli such that
\begin{enumerate}
    \item the components of $N\cut\A$ are $I$-bundles, pared solid tori, and acylindrical pared 3-manifolds;
    \item any immersed essential annulus in $N$ can be homotoped into a component of $N \cut \A$ that is an $I$-bundle or a pared solid torus.
\end{enumerate}

The \emph{window} $W(N)$ of $N$, defined by Thurston, is a submanifold of $N$ consisting of all the $I$-bundle components, and thickened neighborhood of components of $\A$ that are boundaries of the pared solid tori but not adjacent to the $I$-bundles. Then all immersed essential annuli can be isotoped into $W(N)$. The \emph{window surface} $\partial_wN$ is a subsurface of $\partial N$ given by $W(N)\cap\partial N$. The \emph{window frame} $\WF(N)$ is defined to be the boundary of $\partial_wN$. We say that a component $X$ of $\partial N$ is \emph{opaque} if it does not meet $\partial_wN$. This is equivalent to say that no essential annulus in $N$ has a boundary component in $X$.

We need the following fact, which is surely well-known to experts.

\begin{lemma}\label{lem:opa}
Let $N$ be a compact irreducible atoroidal $3$-manifold with incompressible boundary, and let $X$ be an opaque component of $\partial N$. Then if a loop $\gamma$ in $N$ is such that $\gamma^k$ is homotopic into $X$ for some $k\neq 0$, then $\gamma$ is homotopic into $X$.

Consequently, an essential simple loop in $X$ is \emph{indivisible} in $N$, i.e. not homotopic to a proper multiple of a loop in $N$.
\end{lemma}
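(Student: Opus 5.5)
The idea is to turn the homotopy of $\gamma^k$ into $X$ into a singular essential annulus with boundary on $X$. The opacity of $X$ will then force the annulus to be inessential, and that in turn forces $\gamma$ itself into $X$.

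\textbf{Step 1 (the annulus).} Fix a basepoint in $X$ and identify $\pi_1(X)$ with its image in $\pi_1(N)$. This map is injective because $X$ is incompressible. After conjugating, the hypothesis says $g^k = x \in \pi_1(X)$, where $g$ represents $\gamma$. We may assume $g \ne 1$. Now $\pi_1(N)$ is torsion free, since $N$ is irreducible with infinite $\pi_1$, so $x \neq 1$. Let $t$ be any element with $t\notin\pi_1(X)$. Then the relation $(tgt^{-1})^k = t x t^{-1}$ does not immediately give an annulus. Instead, compare the two conjugates $x$ and $gxg^{-1}$: both lie in $\pi_1(X)$ via the same basepoint, since $gxg^{-1} = g\,g^k g^{-1} = x$. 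Thus $g$ commutes with $x$. If $g\notin \pi_1(X)$, the element $g$ furnishes a map of an annulus $A\to N$ with both boundary circles mapped to the loop $x$ in $X$. Concretely, the two boundary paths are $x$ and $g x g^{-1}$, joined by the arc $g$.

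\textbf{Step 2 (essentiality).} I will argue that this annulus is essential. It is $\pi_1$-injective since $x\ne1$. Suppose it were homotopic rel boundary into $X$. Then $g$ would be homotopic, rel endpoints and modulo $\pi_1(X)$, into $X$. In other words $g \in \pi_1(X)\cdot\langle x\rangle = \pi_1(X)$, a contradiction. So we get a singular essential annulus with both boundary components in $X$.

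\textbf{Step 3 (invoking opacity).} By the JSJ/annulus theorem (property (2) of $\A$), this singular essential annulus homotopes into the window $W(N)$. Its boundary then lies in $\partial_w N$. This contradicts that $X$ is opaque, i.e.\ that $X\cap\partial_wN=\emptyset$. The equivalent formulation in the paper is that no essential annulus has a boundary on $X$; the singular version follows from the annulus theorem. Hence $g\in\pi_1(X)$, which means $\gamma$ is homotopic into $X$.

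\textbf{Step 4 (the consequence).} Let $\alpha\subset X$ be an essential simple loop and suppose $\alpha\simeq \gamma^k$ in $N$ with $|k|\ge2$. By the above, $\gamma$ is homotopic into $X$. Since $\pi_1(X)\to\pi_1(N)$ is injective, $\alpha$ is then a proper power in the surface group $\pi_1(X)$. But simple essential curves on a closed surface represent primitive elements, which is a contradiction.

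The main obstacle is Step 2/3: making sure that the centralizer argument genuinely produces an essential singular annulus, not one homotopic into $X$. The delicate point is the homotopy rel boundary versus free homotopy. If this is problematic, I would instead use that in an atoroidal, acylindrical-at-$X$ setting, maximal cyclic subgroups of $\pi_1(X)$ are malnormal-ish in $\pi_1(N)$, which is the algebraic content of opacity.
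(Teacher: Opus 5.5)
Your argument is correct and follows the paper's proof: the relation $g\,x\,g^{-1}=x$ with $x=g^k$ gives a singular annulus with both boundary circles on $X$, and opacity forces it to be inessential, which puts $g$ in $\pi_1(X)$. One small fix: in Step 2, "inessential" should mean properly homotopic into $X$, not homotopic rel boundary, but tracking the two endpoints of the arc $g$ along such a homotopy gives the same conclusion, so the hedge in your last paragraph is unnecessary. For the consequence, your based conclusion $g\in\pi_1(X)$ with $g^k=x$, together with $\pi_1$-injectivity of $X$, replaces the paper's second appeal to opacity (on the annulus from $l$ to $\gamma^k$); primitivity of simple closed curves on the orientable surface $X$ then finishes exactly as in the paper.
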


\begin{proof}
Let $x \in X$ and let $l \subset X$ be a loop at $x$ representing $\gamma^k$. If we represent $\gamma$ by a loop in $N$ based at $x$ (without changing its name), then the relation $\gamma \cdot l \cdot \gamma^{-1} = l$ in $\pi_1(N,x)$ determines map $(A, \partial A) \to (N, X)$ from an annulus $A$ such that each component of $\partial A$ maps to $l$. Since $X$ is opaque, this map from $A$ can be properly homotoped into $X$. This produces a homotopy from $\gamma$ to a loop in $X$ and completes the proof of the first claim.

For the second claim, if $l$ is a simple loop in $X$ that is homotopic to $\gamma^k \subset N$ for some $k\neq 0$, then we already know that $\gamma$ is homotopic into $X$. Hence, assume $\gamma \subset X$. Then the homotopy from $l$ to $\gamma^k$ is again given by a map of pairs $(A, \partial A) \to (N, X)$ which, as above, is properly homotopic into $X$.  But since $X$ is an orientable surface, we must have $k = \pm 1$ and the proof is complete.
\end{proof}

Finally, the following theorem is due to Thurston \cite[Theorem 0.1]{thurston1998hyperbolic3}. An alternative proof, where the dependence only on $\partial N$ is stated explicitly, is given in the appendix of \cite{brock2016bounded}.

\begin{theorem}[Thurston's Only windows break] \label{th:OWB}
Let $N$ be a compact irreducible atoroidal $3$-manifold with incompressible boundary. Then there
exists a constant $C_1\ge 0$ depending only on $\vert \chi(\partial N) \vert$, so that for any
complete hyperbolic structure on $\intr N$,
the length of the window frame is at most $C_1$.
\end{theorem}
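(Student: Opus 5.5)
The plan is to follow Thurston's strategy: reduce to the acylindrical pieces of the JSJ decomposition, bound frontier lengths there by a compactness argument, and then make the bound depend only on $|\chi(\partial N)|$ by a geometric-limit argument (as in the appendix of \cite{brock2016bounded}).

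\textbf{Step 1 (reduction to acylindrical pieces).} Fix a complete hyperbolic structure on $\intr N$ and cut $N$ along the JSJ annuli $\A$. Each component of $\WF(N)$ is isotopic in $N$ to the core of an annulus of $\A$ that is adjacent to an acylindrical pared piece $P$ of $N \cut \A$. Components bounding only $I$-bundles or pared solid tori on both sides are frame curves that also sit on the boundary of a pared solid torus. For those I would bound the length through the adjacent acylindrical piece, or through the interior annulus of $\partial N$ they cobound with another frame curve. It therefore suffices to show the following: for each acylindrical pared piece $(P, \mathcal{P})$, the cores of the pared annuli $\mathcal{P}$ have geodesic length at most a constant depending only on $|\chi(\partial N)|$ in the cover $M_P \to \intr N$ corresponding to $\pi_1(P)$. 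Lengths in $M_P$ equal lengths in $\intr N$. Note also that $|\chi(\partial P)| \le |\chi(\partial N)|$, and the number of pieces and frame curves is bounded in terms of $|\chi(\partial N)|$.

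\textbf{Step 2 (bound for a fixed piece).} For a fixed $(P,\mathcal{P})$, I would use Thurston's compactness theorem for acylindrical pared manifolds. The representations $\pi_1(P) \to \mathrm{PSL}_2(\mathbb{C})$ obtained by restricting holonomies of hyperbolic structures on $\intr N$ lie, up to conjugacy, in a compact subset of the character variety. This is the ``bounded image'' statement: the set $AH(P,\mathcal{P})$ of discrete faithful representations sending $\mathcal{P}$ to loxodromic-or-parabolic elements has compact closure when $(P,\mathcal{P})$ is acylindrical. Trace, and hence translation length, is continuous on this compact set, which gives a bound on $\ell(\text{core of } \mathcal{P})$ for that particular $P$.

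\textbf{Step 3 (uniformity).} This is the main obstacle: there are infinitely many acylindrical $P$ with $|\chi(\partial P)|$ bounded, so Step 2 alone does not give a constant depending only on $|\chi(\partial N)|$. I would argue by contradiction. Suppose there are pairs $(N_i, \rho_i)$ with $|\chi(\partial N_i)|$ bounded and a frame curve $\alpha_i$ with $\ell_{\rho_i}(\alpha_i) \to \infty$.
\begin{enumerate}
\item Realize each boundary component of the acylindrical piece $P_i$ by a pleated surface in $M_{P_i}$ with the frame curves in its pleating locus.
\item By the bounded diameter lemma, each such pleated surface has diameter bounded, modulo its thin part, in terms of $|\chi(\partial N)|$.
\item Pass to a geometric limit of the pleated surfaces, based at points on the $\alpha_i$.
\end{enumerate}
Since $\ell(\alpha_i)\to\infty$ while the surfaces have bounded area, the surfaces must degenerate along $\alpha_i$. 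That is, $\alpha_i$ is forced to cross a collar of a curve $\beta_i$ with $\ell(\beta_i)\to 0$ on the pleated surface, with large twisting or winding. A Margulis tube argument in $M_{P_i}$ should then show that both the pleated surface of $\partial_+$ and that of $\partial_-$, or the two sides of the annulus, enter the same thin tube. This produces an essential annulus in $P_i$ joining $\beta_i$ to a boundary curve crossing the frame essentially, contradicting acylindricity of $(P_i,\mathcal{P}_i)$.

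Making this last contradiction precise, uniformly and without fixing the topology of $P_i$, is the step I expect to be hardest. If the direct argument stalls, I would simply invoke the uniform statement from the appendix of \cite{brock2016bounded}.
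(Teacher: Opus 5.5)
The paper gives no proof of this statement. It quotes Thurston \cite{thurston1998hyperbolic3} and points to the appendix of \cite{brock2016bounded} for the fact that $C_1$ depends only on $|\chi(\partial N)|$. Your closing fallback is exactly that citation, and it is the only part of the proposal that actually establishes the result. The independent argument before it has genuine gaps.

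\textbf{Step~2 is circular.} The set you call $AH(P,\mathcal P)$ is simply $AH(P)$, since every nontrivial element of a discrete faithful image of the torsion-free group $\pi_1(P)$ is loxodromic or parabolic. This set is not precompact in general. For instance, an acylindrical pared JSJ piece can be a handlebody, and then Schottky representations make the cores of $\mathcal P$ as long as you like. Thurston's compactness theorem for acylindrical pared manifolds needs $\mathcal P$ to be parabolic (or of a priori bounded length), and bounding $\ell(\mathcal P)$ is exactly what you are trying to prove. The true statement, that restrictions of holonomies of $\intr N$ to $\pi_1(P)$ are precompact, is essentially the only-windows-break theorem itself. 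It depends on how $P$ sits inside $N$, not just on acylindricity of $(P,\mathcal P)$, so Step~2 is circular even for a fixed $N$.

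\textbf{Step~3 rests on a false inference.} The claim ``since $\ell(\alpha_i)\to\infty$ while the surfaces have bounded area, they must degenerate along $\alpha_i$'' fails: a fixed thick closed hyperbolic surface carries simple closed geodesics of arbitrarily large length (iterate a Dehn twist). Moreover, short curves on a pleated boundary would not contradict anything. What a uniform argument can exploit is this: each frame curve $\alpha$ cobounds an annulus of $\A$ with a boundary curve $\alpha'$ not isotopic to it in $\partial N$. So a pleated realization of $\partial N$ contains two curves running along the same geodesic. If that geodesic is long in the thick part, packing in $\partial N\times\partial N$ (whose volume is controlled by $\chi(\partial N)$) yields a new annulus incompatible with the characteristic submanifold, and the thin part is treated separately. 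This is the kind of volume argument from \cite{brock2016bounded} that the paper adapts in the proof of \Cref{prop:length-bound}. It is uniform automatically and needs neither geometric limits nor a reduction to pieces.

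\textbf{Step~1 does not reach every frame curve.} Consider a binding solid torus of a book of $I$-bundles whose only neighbours are $I$-bundle pages. Its frame curves are adjacent to no acylindrical piece. For example, take $N$ to be a solid torus with three pages $F\times I$ attached along parallel longitudinal annuli: it has nonempty frame and no acylindrical piece at all. Comparing such curves with other frame curves through annuli in $\partial N$ only transfers a bound from one frame curve to another; it never produces one.
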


\subsection{Pleated surfaces}\label{sec:pleated}
A \emph{pleated surface} in $N$ is a map $f$ from a finite-type surface $S$ to the complete hyperbolic manifold $\mr N$, where $S$ is equipped with a hyperbolic metric $\sigma$ and a geodesic lamination $\lambda$, with the following properties: $f$ preserves the lengths of paths, 
maps each leaf of $\lambda$ to a geodesic in $\intr N$, and is totally geodesic in the complement of $\lambda$. We allow $S$ to have boundary, in which case $\lambda$ is required to contain $\partial S$ and $\sigma$ is a hyperbolic metric on $S$ with geodesic boundary. We say $\sigma$ is the \emph{induced hyperbolic metric} on $S$, and $\lambda$ is the \emph{pleating locus} of $f$. For any continuous map $\rho \colon S\to N$, a pleated surface $f \colon (S,\sigma)\to N$ homotopic to $\rho$ is called a \emph{pleated realization} of $\rho$. Given $\rho$ (which in practice will be the inclusion map), we say a geodesic lamination $\lambda$ on $S$ is \emph{realizable} if there is a pleated realization of $\rho$ whose pleating locus contains $\lambda$. If $\rho$ is $\pi_1$-injective, any multicurve on $S$ is realizable. See \cite{canary1987notes} for more details.

\medskip

The next proposition is the main result of this section, generalizing \cite[Lemma 4.1]{field2025lower} who establish the bound for the (unique) totally geodesic hyperbolic structure on $N$, in the case where $N$ is acylindrical.

\begin{proposition}[Bounded junctures]\label{prop:length-bound}
Let $N$ be a compact irreducible atoroidal $3$-manifold with incompressible boundary, and let $\Sigma$ be an incompressible, boundary incompressible properly embedded surface in $N$. Suppose $\alpha$ is a component of $\partial\Sigma$ on an opaque component $X\subseteq\partial N$. Then there is a constant $C_2\ge 0$, depending only on $|\chi(\partial N)|$ and $|\chi(\Sigma)|$, so that for any complete hyperbolic structure on $\intr N$,
$\ell_N(\alpha) \le C_2$.
\end{proposition}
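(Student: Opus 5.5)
We plan to argue with pleated surfaces, following the strategy of \cite[Lemma 4.1]{field2025lower}, but replacing the use of a totally geodesic structure with compactness coming from opacity. Fix a complete hyperbolic structure on $\intr N$. Since $\Sigma$ is incompressible, the inclusion $\Sigma\to N$ is $\pi_1$-injective. Since $\Sigma$ is also boundary incompressible and $X$ is opaque, no component of $\partial\Sigma$ on $X$ is homotopic in $N$ to a curve that is a proper power or is parallel to another boundary curve through an annulus. So we will take a pleated realization $f\colon(\Sigma,\sigma)\to\intr N$ that maps $\partial\Sigma$ to geodesics, with pleating locus containing $\partial\Sigma$. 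Under $f$, the curve $\alpha$ has length $\ell_N(\alpha)$. The induced metric $\sigma$ then gives $\Sigma$ geodesic boundary of that length, and $\sigma$ has area $2\pi|\chi(\Sigma)|$.

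The key step is to find an essential arc $a\subset\Sigma$ with both endpoints on $\alpha$ and bounded $\sigma$-length. This uses the collar lemma in reverse. If $\ell_\sigma(\alpha)$ is very long, then area considerations force the region near $\alpha$ to contain an orthogeodesic arc from $\alpha$ to $\partial\Sigma$ of length at most roughly $\epsilon(\ell)\to 0$. More concretely, an embedded collar of width $w$ about $\alpha$ has area roughly $\ell_\sigma(\alpha)\sinh w\le 2\pi|\chi(\Sigma)|$. This yields a short essential arc $a$ of length $\lesssim |\chi(\Sigma)|/\ell_\sigma(\alpha)$, joining $\alpha$ either to itself or to another component $\beta$ of $\partial\Sigma$. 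By thin-triangle and short-arc geometry, $\alpha$ then contains two long subarcs, each of length at least about $\ell(\alpha)/(2|\chi(\Sigma)|)$, that are fellow-travelled via $f(a)$, either with each other or with $\beta$.

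Next we push this into $N$. The image $f(a)$ is a short arc in $N$ between points of geodesics homotopic into $\partial N$. Two long, nearly parallel geodesic segments connected by a short arc produce a nearly closed loop $\gamma$. Concretely, we look at the deck transformations $g,h\in\pi_1(N)$ of the lifts of the geodesics through the endpoints of a lift of $f(a)$. Their axes are $\delta$-close along a segment much longer than their translation lengths, or at least long compared to the Margulis constant. We will use the standard Margulis/discreteness argument: a short commutator $[g,h]$ forces $g$ and $h$ to share an axis, that is, to generate an elementary group. Since $N$ is atoroidal and hyperbolic, $\langle g,h\rangle$ is cyclic. This gives a singular annulus in $N$ joining $\alpha$ to itself or to $\beta$ with boundary on $X$, and the annulus is essential because $a$ is essential and $\Sigma$ is boundary incompressible. \Cref{lem:opa} handles the issue of powers and lets us compare $\alpha$ with the common root. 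The annulus contradicts opacity of $X$, or contradicts boundary incompressibility when the annulus is boundary parallel. Hence $\ell_N(\alpha)$ is bounded, with the bound depending on $|\chi(\Sigma)|$ through the collar estimate. The quantity $|\chi(\partial N)|$ enters only if we instead compare with a pleated realization of $X$ itself, which is a possible alternative route.

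We expect the main obstacle to be the previous step. The difficulty is to make the fellow-travelling quantitative uniformly, without an injectivity radius lower bound, since $N$ may have arbitrarily short geodesics. The first approach we will try is to choose the fellow-travel length to exceed a multiple of the translation length, so that the Margulis lemma applies regardless of thin parts. If that fails, we will argue by contradiction with a sequence of structures and pass to a geometric limit, in the spirit of the proof of \Cref{th:OWB} in \cite{brock2016bounded}. In the limit, an unbounded sequence of lengths would produce an essential annulus meeting $X$.
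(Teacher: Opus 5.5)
The step where you push the band into $N$ and invoke the Margulis lemma has a genuine gap, and it cannot be repaired as set up. Your $g,h$ stabilize the lifts of $\alpha$ and of $\beta$ (or of a second lift of $\alpha$) at the two ends of a lift of the essential arc $a$. So they, or suitable powers of them, are images of boundary elements of $\pi_1(\Sigma)$ whose axes in $\wt{\Sigma}$ are distinct. Since $\Sigma$ is incompressible and has boundary, $\pi_1(\Sigma)$ is a free subgroup of $\pi_1(N)$, and no nontrivial powers of $g$ and $h$ commute. Hence $\langle g,h\rangle$ is never elementary. The input your argument needs --- axes $\delta$--close along a segment longer than roughly $\ell(g)+\ell(h)$ --- can therefore never be established. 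The geometry agrees: your band runs along a proper subarc of $\alpha$ of length about $\ell(\alpha)/(2|\chi(\Sigma)|)$, shorter than the translation length $\ell(\alpha)$ of $g$. Fellow-travelling that is merely long compared to the Margulis constant carries no information, since such a band exists in every hyperbolic structure on $\Sigma$ with a long boundary component. The contradiction has to come from how $\alpha$ sits on $X$. So the pleated realization of $\partial N$ that you list as an optional alternative is exactly what is needed, and it is where $|\chi(\partial N)|$ enters.

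What is missing is a recurrence argument that closes the band up into an annulus with boundary on $X$, rather than a commutator estimate. The paper pleats $\partial N$ and $\Sigma$ simultaneously along the geodesic multicurve $\partial\Sigma$. This gives a map $\partial N\cup_{\partial\Sigma}\Sigma\to\intr N$ homotopic to the inclusion on $X\cup\Sigma$, using \Cref{lem:opa}.

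\begin{itemize}
\item \textbf{Thin part.} A preliminary claim shows that either $\ell_N(\alpha)\le 2R$, or the geodesic $\alpha^*$ misses a uniform thin part $N^{\le\epsilon_2}$. Otherwise an essential arc of $\Sigma$ of length at most $2R$, starting at a point of $\alpha^*$ deep in a Margulis tube, has both endpoints in the thin part of the pleated $\partial N$ (by Thurston's lemma that pleated surfaces map thin into thin). This violates opacity or boundary incompressibility.
\item \textbf{Packing.} The band sides $\eta_1\subset\alpha$ and $\eta_2\subset\partial\Sigma$ now lie in the thick part. Pack the curve $t\mapsto(\eta_1(t),\eta_2(t))$ into $\partial N\times\partial N$, whose area is bounded in terms of $|\chi(\partial N)|$. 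This yields $t_1,t_2$ with $|t_1-t_2|\ge 1$ such that $\eta_i(t_1)$ and $\eta_i(t_2)$ are $\epsilon_2/4$--close in $\partial N$ for $i=1,2$.
\item \textbf{Closing up.} The resulting loop has length less than $\epsilon_2$ and passes through the thick part, so it is null-homotopic. The sub-band between $t_1$ and $t_2$, together with a disk, is then an immersed annulus. Its boundary loops are a subarc of $\eta_1$ closed up by a short arc in $X$, and a subarc of $\eta_2$ closed up by a short arc in $\partial N$ --- not $\alpha$ and $\beta$ themselves.
\item \textbf{Contradiction.} This annulus is essential: its core is nontrivial, and it is not boundary parallel by boundary incompressibility. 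That contradicts opacity of $X$.
\end{itemize}

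Your geometric-limit fallback does not identify this closing-up mechanism, which is the heart of the argument.
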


\begin{proof}
Fix a pleated realization of $\partial N$ and $\Sigma$ in $N$ so that the multicurve $\partial\Sigma$ is in the pleated loci of both $\partial N$ and $\Sigma$. Since both pleated maps take $\partial\Sigma$ to its geodesic representative in $N$, we can combine the two maps to obtain a single map $g\colon\partial N\cup_{\partial \Sigma}\Sigma\to \intr N$. We claim that $g|_{X\cup\Sigma}$ is homotopic to the identity inclusion. Indeed, since $N$ is atoroidal and $X$ is opaque, there is a unique way up to homotopy to homotope curves in $\partial\Sigma\cap X$ to their geodesic representatives (see \Cref{lem:opa}). Both $X$ and $\Sigma$ are homotopic to their images under $g$, so we can combine the homotopies to get a homotopy from $X\cup\Sigma$ to $g(X\cup\Sigma)$. Alternatively, one can also more carefully construct $g$ to be homotopic to the identity on all of $\partial N\cup\Sigma$ by first realizing $\partial N$ as pleated surfaces and realizing $\Sigma$ in the right homotopy class as a simplicial hyperbolic surface (as in \cite{canary1996covering}) with all vertices on $\partial \Sigma$, and then spin the vertices about $\partial \Sigma$ to make $\Sigma$ pleated (as in \cite[Section 8.39]{Thurston}). As we will see later, the reason we need $g$ to be homotopic to the identity on the union $X\cup \Sigma$, instead of just on $X$ and $\Sigma$ individually, is that we would like an immersed boundary compressing disk for $g(\Sigma)$ and $g(X)$ to give an immersed boundary compressing disk for $\Sigma$ and $X$.

The pleated map $g$ induces a hyperbolic structure $\tau$ on $\partial N$, and a hyperbolic structure with geodesic boundary $\sigma$ on $\Sigma$. These are the metrics we will use when talking about $\partial N$ and $\Sigma$. We denote 
the unique geodesic representative of any closed curve $\gamma$ in $N$ by $\gamma^*$. Note that in these fixed metrics $\partial \Sigma$ is geodesic on both $\Sigma$ and $\partial N$.

\smallskip
Let $\epsilon_0$ be the Margulis constant for $\HH^3$, and recall that $\alpha$ is from the statement of \Cref{prop:length-bound}. The $\epsilon$--thin part of $N$, for $\epsilon \le \epsilon_0$, is denoted by $N^{\le \epsilon}$.

\begin{claim}\label{claim:thin-part}
There exists $R\ge 0$ and $0<\epsilon_2<\epsilon_0$, depending on the topology of $\Sigma$ and $\partial N$, such that either we have $\ell_N(\alpha^*)\leq 2R$ or $\alpha^*$ is disjoint from $N^{\leq\epsilon_2}$.
\end{claim}

\begin{proof}[Proof of Claim]
 First, we need the following observation by Thurston \cite{thurston1986hyperbolic}.

\begin{lemma}[Thurston]\label{lem:pleated-thin}
For any topological surface $S$, there exists a constant $\epsilon_1<\epsilon_0$ depending on the topology of $S$, such that for any closed pleated surface $f \colon S\to N$, we have $f^{-1}(N^{\leq\epsilon_1})\subseteq S^{\leq\epsilon_0}$.
\end{lemma}

Fix a choice of $\epsilon_1$ satisfying \Cref{lem:pleated-thin} for $\partial N$, and fix a constant $R$ such that $\cosh(R)>1-\chi(\Sigma)$. Then there exists a uniform $\epsilon_2<\min\{\epsilon_1,R\}$ such that $d_N(N^{\le\epsilon_2},N^{>\epsilon_1})>2R$ (see \cite[Lemma 6.1]{minsky1999classification}). 
Suppose $\alpha^*$ intersects an $\epsilon_2$-Margulis tube $\TTT_{\epsilon_2}(c)$ in $N^{\leq\epsilon_2}$, where $c$ is an oriented core curve in $N$. For contradiction, assume $\alpha^*$ has length greater than $2R$.

Since $\epsilon_2<R$, the curve $c$ is not homotopic to $\alpha$. Let $x$ be a point in $\alpha^*\cap\TTT_{\epsilon_2}(c)$ and let $x_1$ be a point in $\alpha\subset\partial\Sigma$ such that $g(x_1)=x$. Consider the $R$-neighborhood $B_R(x_1)$ of $x_1$ in $\Sigma$.
By our choice of $R$, the half-disk $B_R(x_1)$ cannot be embedded in $\Sigma$ because otherwise the area of $B_R(x_1)$ is greater than the area of $\Sigma$. Therefore, either there exists an essential arc on $\Sigma$ with length $\leq R$ connecting $x_1$ to $\partial \Sigma$, or we can find a geodesic loop starting and terminating at $x'$ with length $<2R$.
 If this loop is essential, we are back to the first case, except that we have an essential arc of length $\leq2R$. If this loop is homotopic to $\alpha$, then we have a contradiction to the assumption that $\ell_\sigma(\alpha)=\ell_N(\alpha^*)>2R$. 

Let $a$ denote the essential arc of $\Sigma$ containing $x_1$ that we just produced and let $x_2$ be the other endpoint of $a$. Note that both $x_1$ and $x_2$ are contained in $\partial \Sigma \subset \partial N$. The path $g(a)$ is contained in $\TTT_{\epsilon_1}(c)$, by our choice of $\epsilon_2$. Then we have that both $x_1$ and $x_2$ are contained in $g^{-1}(\TTT_{\epsilon_1}(c))\cap \partial N \subseteq (\partial N)^{\leq\epsilon_0}$ by \Cref{lem:pleated-thin}. If $x_1$ and $x_2$ are in two different $\epsilon_0$-thin tubes of $\partial N$, then (up to taking multiples) the core curves of these components are non-homotopic in $\partial N$ and are homotopic to $c$ in $N$, 
contradicting the fact that $X$ is opaque. Hence, $x_1$ and $x_2$ are contained in the same component $T$ of $X^{\leq\epsilon_0}$. Pick an oriented core curve $c'$ of $T$ homotopic to $c$ (using \Cref{lem:opa}), and take any arc $b$ in $T$ connecting $x_1$ and $x_2$. The closed curve $g(a\cup b)$ is contained in $N^{\leq\epsilon_0}$, so it is homotopic to $c^n$ for some $n$. If we let $b'$ be the arc in $T$ obtained by winding $b$ around $c'$ for $-n$ times, then $a\cup b'$ is trivial. In other words, the essential arc $a\subset \Sigma$ is homotopic to the arc $b' \subset \partial N$ rel endpoints. Note that here we implicitly use that $g|_{X\cup\Sigma}$ is homotopic to the identity. Although the homotopy from $a$ to $b'$ is not necessarily a boundary compression, because the image of the homotopy might be an immersed disk intersecting $\Sigma\cup\partial N$ in its interior, a standard argument allows us to remove inessential intersections and take the innermost essential boundary compressing disk, contradicting the boundary incompressibility of $\Sigma$. 
\end{proof}

By elementary hyperbolic geometry, for any $L>0$ there exists a constant $D(L)\ge 0$ so that if $\ell_{\sigma}(\alpha)>D(L)$, then $\Sigma$ contains a rectangular band $B$ with a pair of opposite sides $\eta_1\subseteq\alpha$ and $\eta_2\subseteq\partial \Sigma$ of length $L_\eta>L$, and of width $<\epsilon_2/8$. This constant $D(L)$ will depend on $L$, and also on $\epsilon_2$ and the topology of $\Sigma$. Note that $\eta_1$ and $\eta_2$ are also geodesic segments on $\partial N$.
Up to enlarging $D(L)$, we may assume that $D(L)$ is always greater than $2R$. Then \Cref{claim:thin-part} shows that $g(\eta_1)$ is contained in $N^{>\epsilon_2}$. Since $g(\eta_2)$ stays in the $\epsilon_2/8$-neighborhood of $g(\eta_1)$, there exists $\epsilon_3>0$ such that $g(\eta_2)$ is in $N^{>\epsilon_3}$.
We also assume $\epsilon_3$ is smaller than $\epsilon_2/4$.

We now apply a volume argument from the appendix of \cite{brock2016bounded}. More precisely, we parameterize $\eta_i$ by arc length with $d_\Sigma(\eta_1(t),\eta_2(t))<\epsilon_2/4$ for all $t\in[0,L_\eta]$. Define a map $G \colon [0,L_\eta]\to\partial N\times\partial N$ given by
\[
G(t)=(\eta_1(t),\eta_2(t)).
\]
The volume of $\partial N\times\partial N$ is $4\pi(\chi(\partial N))^2$. On the other hand, for any $t$, $G(t)$ has an embedded neighborhood $B_{\epsilon_3}(\eta_1(t))\times B_{\epsilon_3}(\eta_2(t))$ in $\partial N\times\partial N$ because $g$ is 1--Lipschitz on $\partial N$.
Such a neighborhood has volume $4\pi^2(\cosh(\epsilon_3)-1)^2$. Therefore, in search for a contradiction, if we have
\begin{equation}\label{eq:length}
    L_\eta>L_0=\frac{4\pi(\chi(\partial N))^2}{4\pi^2(\cosh(\epsilon_3)-1)^2}+2,
\end{equation}
then there exists $t_1,t_2\in [0,L_\eta]$ with $|t_1-t_2|\geq 1$ 
such that the neighborhoods described above of $G(t_1)$ and $G(t_2)$ given by products of $\epsilon_3$-disks have non-trivial intersection. In other words, we have points $\eta_1(t_1),\eta_1(t_2)$ in $\eta_1$, and points $\eta_2(t_1),\eta_2(t_2)$ in $\eta_2$ such that
\begin{itemize}
\item $d_{\eta_i}(\eta_i(t_1),\eta_i(t_2))\geq 1$ for $i=1,2$;
\item $d_{\partial N}(\eta_i(t_1),\eta_i(t_2))\leq\epsilon_2/4$ for $i=1,2$;
\item $d_\Sigma(\eta_1(t_j),\eta_2(t_j))\leq\epsilon_2/4$ for $j=1,2$.
\end{itemize}

\begin{figure}[h]
    \centering
    \includegraphics[width=.7\textwidth]{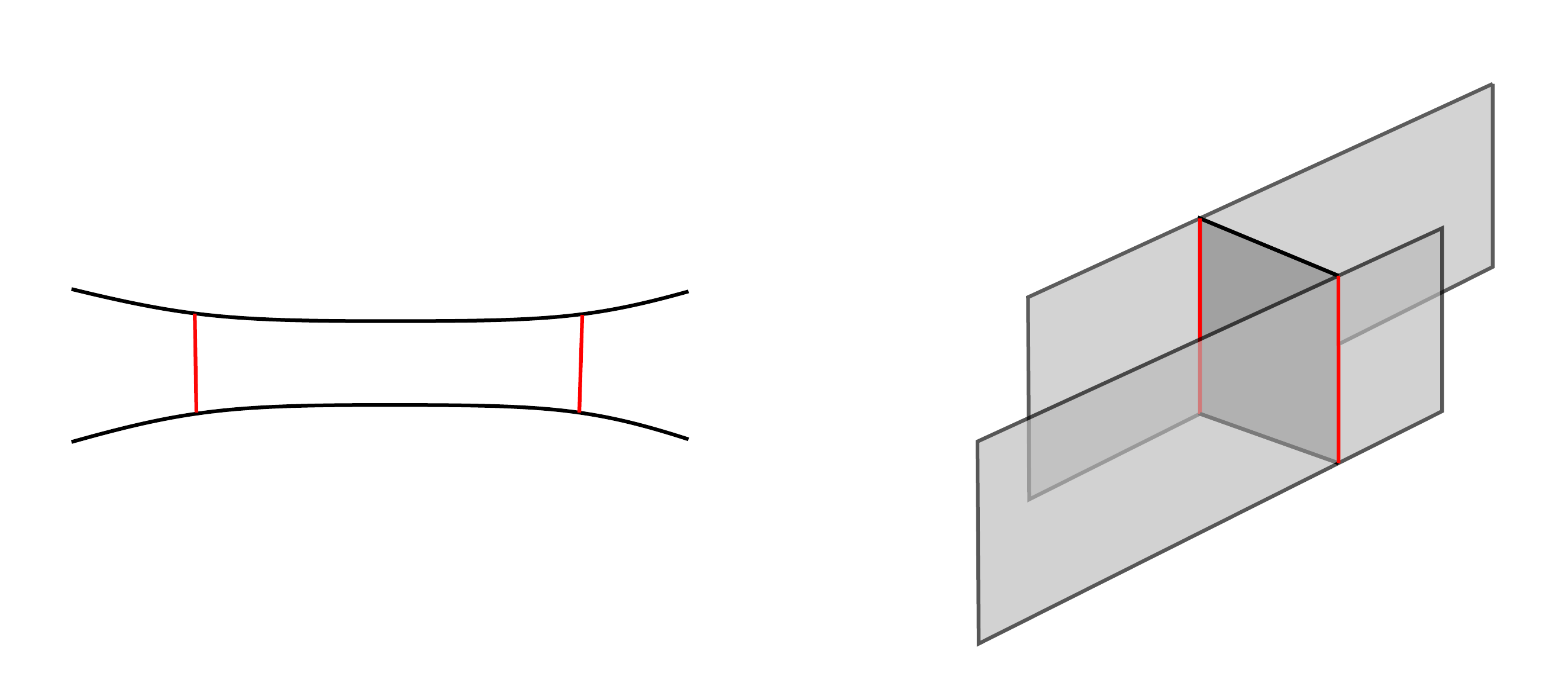}
    \caption{The band $B$ and the disk $\Delta$.}\label{fig:band}
\end{figure}

Take the shortest geodesic arcs $\gamma_1$ in $\Sigma$ connecting $\eta_1(t_1)$ and $\eta_2(t_1)$, $\gamma_2$ in $\Sigma$ connecting $\eta_1(t_2)$ and $\eta_2(t_2)$, $\gamma_3$ in $\partial N$ connecting $\eta_1(t_1)$ and $\eta_1(t_2)$, and $\gamma_4$ in $\partial N$ connecting $\eta_2(t_1)$ and $\eta_2(t_2)$. Note in particular that $\gamma_1$ is an essential arc in $\Sigma$, because $\eta_1$ and $\eta_2$ are two nearly parallel geodesic arcs. Each of the $\gamma_i$ has length $<\epsilon_2/4$, so if we let the closed curve $\gamma$ be the concatenation of $\gamma_1$, $\gamma_3$, $\gamma_2$, and $\gamma_4$, then $g(\gamma)$ is a closed curve of length $<\epsilon_2$ in $N$. But since $g(\eta_1(t_1))\in g(\eta_1)$ is in the $\epsilon_2$-thick part of $N$, $g(\gamma)$ is trivial in $N$ bounding an immersed disk $\Delta$. See \Cref{fig:band}. If we denote the sub-band of $B$ between $\gamma_1$ and $\gamma_2$ by $B'$, then $g(B')$ and $\Delta$ together gives a non-trivial immersed annulus $A$ in $N$ with $\partial A\subseteq g(\partial N)$, and one boundary of $A$ is contained in $g(X)$.

Using the homotopy from $g|_{\partial N}$ to the inclusion, we may homotope the annulus $A$ back to a properly immersed annulus $A'$ in $N$ with at least one boundary component in $X$. The annulus $A'$ has non-trivial core, because $\partial A\cap g(X)$ is the image of a loop in $X$ consisting of two distinct geodesic arcs. We claim that $A'$ is also not boundary parallel. To see this, we may assume both boundaries of $A'$ lie on $X$, otherwise there is nothing to show. If $A'$ can be homotoped to $X$, then $A$ can be homotoped to $g(X)$, which in particular means $g(\gamma_1)$ can be homotoped to $g(X)$. 
But since $\gamma_1$
is an essential arc in $\Sigma$, and since $g$ is homotopic to the identity on $\Sigma\cup X$, this contradicts the assumption that $\Sigma$ is boundary incompressible (as in the last paragraph of the proof of \Cref{claim:thin-part}). 
This contradiction shows that $A'$ is an essential annulus in $N$ with one boundary component contained in $X$. But this contradicts the assumption that $X$ is opaque. The contradiction shows that no component of $\partial\Sigma$ has length  greater than $D(L_0)$, where $L_0$ is the constant from (\cref{eq:length}) and $D(L_0)$ ultimately depends only on the topology of $\partial N$ and $\Sigma$.
\end{proof}

\section{Endgame: proof of main theorems}\label{sec:endgame}

Recall that we have fixed a closed hyperbolic $3$--manifold $M$ with an almost pseudo-Anosov flow $\varphi$, and that $S$ is an embedded surface that is transverse to $\varphi$. 

Combining what we have done to this point, we can establish the following:

\begin{proposition}\label{prop:bounded_length}
There are constants $L, d \ge 1$, depending on $|\chi(S)|$ and $\mf c(M \cut S)$, and curves $\alpha$ and $\beta$ in $\C(S)$ such that: 
\begin{itemize}
\item the lengths of the geodesic representatives of $\alpha$ and $\beta$ in $M$ are no more than $L$, and 
\item the intersection numbers $i(c^s, \alpha)$ and $i(c^u, \beta)$ are no more than $d$. 
\end{itemize}
\end{proposition}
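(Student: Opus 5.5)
We split into two cases according to whether $N = M\cut S$ contains a product annulus, and in each case produce curves $\beta$ on $\partial_+N$ (compared with $c^u$) and $\alpha$ on $\partial_-N$ (compared with $c^s$). Lengths in $M$ are controlled by lengths in $\intr N$: the inclusion $\intr N \to M$ is $1$--Lipschitz after giving $\intr N$ the hyperbolic structure of the cover of $M$ corresponding to $\pi_1(N)$, which is a complete hyperbolic structure on $\intr N$ since $S$ is incompressible. Throughout we identify $\partial_\pm N$ with $S$. Note that $N$ is irreducible and atoroidal with incompressible boundary because $M$ is closed hyperbolic and $S$ is incompressible.

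\emph{Case 1: $N$ contains a product annulus $A$.} Here $\mf c(M\cut S)=0$. Some window frame component then lies on $\partial_+N$, and another on $\partial_-N$. Indeed, a product annulus can be isotoped into the window $W(N)$, so $\partial_\pm A$ lie in the window surface. If $\partial_\pm A$ were isotopic into $\WF(N)$ we use those frame curves; otherwise the window surface component containing $\partial_+A$ has nonempty frame, since $S$ is not a fiber and so $\partial_+ N$ is not entirely window. By \Cref{th:OWB}, every component of $\WF(N)$ has length at most $C_1(|\chi(S)|)$ in $\intr N$, hence in $M$. Take $\beta$ to be a frame component on $\partial_+N$ adjacent to the window component containing $\partial_+A$, and $\alpha$ similarly on $\partial_-N$.

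To bound intersections, first recall that $i(c^u,\partial_+A)$ is bounded: by \Cref{cor:window-distance}, $\partial_+A$ is disjoint from $c^u_p$. The nonprincipal components of $c^u$ are boundaries of product flow annuli, hence lie in the window. The main subtlety is that $c^u$ can wind inside the window surface, so disjointness from $\partial_+A$ does not immediately bound $i(c^u,\beta)$. I would argue instead that each component of $c^u$ is either a frame-disjoint curve or itself the boundary of an essential annulus (a product flow annulus, or one from \Cref{lem:unbalanced-annuli}-type arguments). In the latter case it is isotopic into the window surface, hence disjoint from $\WF(N)$, giving intersection $0$. Principal components come from half-leaves $A^u_\omega$; if such a component met the frame essentially, \Cref{rmk:jsj}-style reasoning would produce an annulus crossing a JSJ annulus, contradicting the canonicity of $\mc A$. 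If this fails, the fallback is to accept $d_{\C(S)}\le 2$ bounds and replace the intersection bound with a distance bound, which suffices for the endgame.

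\emph{Case 2: no product annulus, $\mf c<\infty$.} Choose $\Sigma\subset N$ transverse, meeting both $\partial_\pm N$, with $-\chi(\Sigma)=\mf c$. We may assume $\Sigma$ is boundary incompressible: otherwise, by \Cref{lem:boundary-compressions}, a boundary compression is realized by a product flow disk, and boundary compressing keeps the surface transverse without increasing $-\chi$. If components become disjoint from $\partial_+N$ or $\partial_-N$, we need care, and I expect this to be the main obstacle; I would argue that a minimizer cannot lose a side, or else pass to a component still meeting both sides. By \Cref{lem:incompressible}, $\Sigma$ is incompressible. Since there is no product annulus, any essential annulus has both boundaries on one side, and by \Cref{lem:unbalanced-annuli} its boundary components are in $c^{s/u}$. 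If some component $X$ of $\partial N$ is opaque and contains a component $\beta$ of $\partial\Sigma$, then \Cref{prop:length-bound} gives $\ell_M(\beta)\le C_2(|\chi(S)|,\mf c)$. Moreover, $i(c^u_p,\beta)$ is bounded by \Cref{prop:intersection}, and nonprincipal components of $c^u$ bound product flow annuli, which do not exist here, so $c^u=c^u_p$. If $X$ is not opaque, we instead use the window frame of $N$ as in Case 1, whose components are short by \Cref{th:OWB} and, by \Cref{lem:unbalanced-annuli}, lie near $c^{u}$. The analogous choice on $\partial_-N$ gives $\alpha$. The constants $L,d$ are the maxima of $C_1$, $C_2$, and the bound from \Cref{prop:intersection}.
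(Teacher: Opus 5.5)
\textbf{Gap in Case~1: the principal components of $c^u$.} Your Case~2 matches the paper's argument. Its non-opaque subcase already contains the right idea: a frame curve on $\partial_+N$ bounds an annulus of $\mathcal A$, and when that annulus is not a product annulus, \Cref{lem:unbalanced-annuli} makes the frame curve a component of $c^u$, so $i(c^u,\beta)=0$.

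In Case~1 you argue that a principal component meeting the frame essentially would, ``\Cref{rmk:jsj}-style,'' produce an annulus crossing a JSJ annulus. That does not work. A principal curve $c_1$ cobounds the half-leaf $A^u_\omega$ with a closed orbit $\omega$ in the interior of $N$. It does not bound a properly embedded essential annulus, so the JSJ property gives nothing. This is exactly why \Cref{rmk:jsj} only handles nonprincipal curves; when $\partial_+N$ is opaque, every component of $c^u$ is principal.

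Your fallback, a $\C(S)$--distance bound routed through $\partial_+A$, does not prove the stated intersection bound. The intersection form is what the paper uses later to control subsurface projections in \Cref{th:short_curves}. So as written, Case~1 does not establish $i(c^u,\beta)\le d$.

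\textbf{The fix.} Apply \Cref{cor:window-distance} not to your auxiliary product annulus $A$, but to the JSJ annulus $A'\in\mathcal A$ that your chosen frame curve $\beta$ bounds.

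If $A'$ is a product annulus, \Cref{lem:saturated-annulus} gives two possibilities. Either $\beta=\partial_+A'$ is itself a component of $c^u$, or $A'$ is properly homotopic to a product flow annulus $A'_\varphi$. In the second case, the backward-orbit argument in the proof of \Cref{cor:window-distance} shows $\partial_+A'_\varphi$ misses every principal curve. Backward orbits from $\partial_+A'_\varphi$ exit through $\partial_-N$, while backward orbits from a principal curve stay in $N$ and accumulate on $\omega$. Combine this with your correct observation that nonprincipal components bound product flow annuli, so they are isotopic into the window surface and off the frame (or use \Cref{rmk:jsj}). This gives $i(c^u,\beta)=0$.

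If instead $A'$ has both ends on $\partial_+N$, use \Cref{lem:unbalanced-annuli} exactly as in your Case~2.

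With this change, the product annulus $A$ is only needed to guarantee that the window surface meets $\partial_\pm N$. Your two cases then reduce to the paper's split according to whether $\partial_\pm N$ is opaque.
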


\begin{proof}
    We will prove the proposition for $c^s$. The statement for $c^u$ is similar.

First, recall from \Cref{sec:jsj} that $\partial_w N$ is the window subsurface of $N = M \cut S$.
If $\partial^w N \cap\partial_-N$ is non-empty (as in the case where $\mf c(M\cut S) = 0$), take any component $P$ of the intersection. We will also view $P$ as a subsurface of $S$. By the construction of the windows, for any 
component $\alpha$ of $\partial P$, there exists a properly embedded essential annulus $A$ of $\mc A$ in $N$ with $\alpha \subseteq \partial A$. 
By \Cref{cor:window-distance} and \Cref{rmk:jsj} when $A$ is a product annulus, and \Cref{lem:unbalanced-annuli} otherwise, the multicurve $c^s$ must be disjoint from $\alpha$, up to isotopy.
Hence $i(c^s,\alpha)=0$, and since $\alpha$ is in $\WF(N)$, Thurston's only windows break theorem (\Cref{th:OWB}) implies that $\ell_M(\alpha)\leq C_1$, where $C_1$ depends only on $\vert \chi(S)\vert$.

If $\partial_-N$ is opaque, let $\Sigma\subset N$ to be a properly embedded essential surface transverse to (a dynamic blowup of) $\phi$ with minimal complexity. Note that $\chi(\Sigma)=-\mf{c}(M \cut S)$ by definition. Let $\alpha$ be a component of $\partial\Sigma\cap\partial_-N$. By \Cref{prop:intersection}, the intersection number $i(\alpha,c_p^s)$ is bounded by a constant $d$ depending only on $|\chi(\Sigma)|$. Moreover, since $\partial_-N$ is opaque, $c_p^s = c^s$ as any nonprincipal stable curve cobounds an essential product flow annulus (see \Cref{sec:principal}). 
By \Cref{prop:length-bound}, $\ell_M(\alpha)\leq C_2$, where $C_2$ depends on $|\chi(\Sigma)|$ and $|\chi(S)|$.
Setting $L = \max\{C_1,C_2\}$ completes the proof.
\end{proof}

Given the geometric control provided by \Cref{prop:bounded_length}, our strategy for the proofs of the main theorems is to first pass to the quasi-Fuchsian cover corresponding to $S$ and then to apply previously obtained results in that setting. 

For this, let $p \colon Q \to M$ be the cover of $M$ associated to $S$ and fix $S \to Q$ to be the unique lift of the embedding $S \to M$. Since $S$ is not a cross section of $\varphi$, the hyperbolic manifold $Q$ is quasi-Fuchsian (see \Cref{sec:transvese_shadow}); that is, it is a convex cocompact hyperbolic $3$-manifold for which $S \to Q$ is a homotopy equivalence. 

\smallskip
To apply our strategy, we will need the following topological lemma. First, recall that a $S$ is a semifiber of a manifold $M$ if $M$ is obtained by gluing together two twisted interval bundles over nonorientable surfaces and $S$ is the image of the boundary of each bundle. Such a surface becomes a fiber in a double cover; hence, a quasi-Fuchsian surface is neither a fiber nor semifiber.

\begin{lemma} \label{lem:bounded_hom}
Let $S$ be a closed, oriented incompressible surface in a closed hyperbolic $3$-manifold $M$. Suppose that $S$ is not a fiber or semifiber.

The maximal number of pairwise nonhomotopic closed curves in $S$ that become homotopic in $M$ is bounded by a constant depending only on $\vert \chi(S)\vert$.
\end{lemma}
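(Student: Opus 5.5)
We cut $M$ along $S$ and work in $N = M\cut S$, a compact irreducible atoroidal manifold with incompressible boundary $\partial N = \partial_+N\sqcup\partial_-N$, each component a copy of $S$. The plan is to reduce the statement to the JSJ theory of $N$ recalled in \Cref{sec:jsj}. First we show that two essential curves $a,b\subset S$ that are freely homotopic in $M$ but not in $S$ produce an essential annulus in $N$.

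\emph{Step 1: from a homotopy to an essential annulus.} Take a free homotopy $A\to M$ between $a$ and $b$, viewed as a map of an annulus, and make it transverse to $S$. Because $S$ is incompressible and $M$ is irreducible, we can remove the inessential circles of $A\cap S$ by a standard innermost-disk argument. The annulus is then cut along essential circles into subannuli, each of which maps properly into $N$ with both boundary curves on $\partial N$. Since $a$ and $b$ are not homotopic in $S$, at least one of these subannuli is not homotopic into $\partial N$ rel boundary. Such a subannulus is an essential singular annulus, so by property (2) of the JSJ collection $\mc A$ it can be homotoped into the characteristic submanifold, namely the $I$-bundle pieces and the pared solid tori of $N\cut\mc A$. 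Following the chain of subannuli, every curve in the $M$-homotopy class of $a$ that lies on $S$ is therefore obtained from $a$ by a finite sequence of moves. Each move either passes across an $I$-bundle component (via the involution on its horizontal boundary), or slides around a pared solid torus, or crosses back through $S$ by the identification $\partial_+N\cong S\cong\partial_-N$.

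\emph{Step 2: bounding the number of distinct classes.} By \Cref{th:OWB}-type topology (and with no geometry needed), the characteristic submanifold has a bounded number of components, each with complexity bounded in terms of $|\chi(\partial N)| = 2|\chi(S)|$. A solid torus piece contributes only boundedly many annuli, and each such annulus winds a bounded number of times, since the winding is bounded by the number of annuli of $\mc A$ on it. This gives boundedly many possible images of a curve under one move. The remaining task is to bound the length of the chain. The homotopy classes reached form the orbit of $a$ under a partially defined "return" correspondence $S\to S$. This correspondence is built by composing the $I$-bundle involutions and solid-torus slides with the gluing across $S$.

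\emph{Step 3 (main obstacle): the return map has no long orbits.} I expect the difficulty here. If some curve class returned to $S$ infinitely often with pairwise distinct classes, or if a chain could be arbitrarily long, then the $I$-bundle pieces would be glued end to end across $S$. Such a pattern would either exhaust $N$ as an $I$-bundle, making $S$ a fiber (product bundle) or a semifiber (twisted bundles), or else produce a torus (a closed cycle of annuli) in $M$ or an annulus between $\partial_+N$ and $\partial_-N$ closing up. In the second case, if the closed-up class is of infinite order and not realized by a curve in $S$, we get a $\Z^2$ subgroup, contradicting hyperbolicity. Concretely, I would take the subsurface $W_\pm=\partial_wN\cap\partial_\pm N$ and consider the finite directed graph whose vertices are the components of the window surface and whose edges come from the moves. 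A cycle in this graph gives a map of a torus or Klein bottle into $M$ that is $\pi_1$-injective, unless the window subsurfaces fill $S$ on both sides, which is the (semi)fiber case. Since we exclude fibers and semifibers and $M$ is atoroidal, the graph is acyclic. Its size is bounded in terms of $|\chi(S)|$, and together with the bounded branching from Step 2 this bounds the number of classes. Indivisibility issues, where $a^k\simeq b^j$, are handled by \Cref{lem:opa}-style arguments inside the window.
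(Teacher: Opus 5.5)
Your architecture matches the paper's: bounded branching per move (your Step 2) plus a uniform bound on chain length. The paper builds the graph $G$ of curve classes on $S$ that are homotopic in $M$ to a fixed $\alpha$, with edges given by \emph{simple} homotopies (those meeting $S$ only at their ends). It bounds the degree of $G$ with the JSJ annuli of $M\cut S$, essentially as in your Step 2, and then bounds the diameter of $G$. The gap is in your Step 3, which carries all the real content, and the argument you give there fails.

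\textbf{Acyclicity is false.} The directed graph on window components need not be acyclic, and a cycle in it does not produce a torus. A cycle of \emph{components} does not close up any particular \emph{curve*}: the composite of the moves around the cycle is only a partial map, defined on curves that land in the right window component at every stage. Its domain can become empty after a few passes. Here is an example.
\begin{itemize}
\item Suppose the window of $N$ contains a product region $W\times I$, with $W\times\{0\}\subset\partial_-N$ and $W\times\{1\}\subset\partial_+N$.
\item The move through this region is a homeomorphism $\psi$ from the subsurface $W_-\subset S$ to the subsurface $W_+\subset S$.
\item Suppose $W_-\cap W_+$ is only an annular neighborhood of a curve $b=\psi(a)$ with $a\subset W_-$.
\end{itemize}
Then your graph has a loop at $W$. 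However, $\psi(b)\ne b$ is a curve of $W_+$ not isotopic into $W_-$, so the chain $a\mapsto b\mapsto\psi(b)$ stops and no curve returns. Nothing here prevents $M$ from being hyperbolic with $S$ not a (semi)fiber. Your torus/Klein bottle argument only applies when some curve, or some subsurface, is carried back to itself, and a cycle of components does not supply that.

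\textbf{What is needed instead.} You need a \emph{uniform} bound on how many times a reduced homotopy between curves of $S$ must cross $S$. Finiteness of each individual chain, which is all your Step 1 gives, is not enough. The paper imports this bound from DeBlois: when $S$ is not a fiber or semifiber, every reduced homotopy is a concatenation of at most $14g-12$ simple homotopies. So $G$ has diameter at most $14g-12$, and bounded degree together with bounded diameter bounds the number of vertices. To avoid citing that theorem, you would have to prove an equivalent quantitative statement. One route is to track the nested subsurfaces on which composites of moves are defined, show their complexity must drop within boundedly many steps unless some subsurface is invariant, and then rule out invariant subsurfaces using hyperbolicity and the non-(semi)fiber hypothesis. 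None of this is in the proposal.
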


In particular, there is a uniform bound on the number of closed geodesics in $Q$ that map to the same closed geodesic under the covering map $p \colon Q \to M$.

\begin{proof}
Fix a homotopy class $\alpha$ of curves in $S$ and let $G$ be the graph whose vertices are homotopy classes of curves in $S$ that are homotopic to $\alpha$ within $M$. Connect two distinct vertices by an edge if there is a homotopy $H \colon S^1 \times I \to M$ between these curves such that $H^{-1}(S) = S^1 \times \{0,1\}$ (call such a homotopy \emph{simple}). Note that a simple homotopy induces a homotopy $S^1\times I \to M \cut S$ and hence can be properly homotoped to be disjoint from the JSJ annuli $\mc A \subset M \cut S$ (see \Cref{sec:jsj}). 

Our goal is to bound the number of vertices of $G$. A standard argument shows that $G$ is connected. More importantly,  a theorem of Deblois \cite{deblois2016explicit} states the any reduced homotopy is a concatenation of at most $14g-12$ simple homotopies, where $g$ is the genus of $S$. In terms of $G$, this means that any two vertices of $G$ can be joined by a path of length at most $14g-12$. Hence, it suffices to bound the degree of any vertex of $G$. 

For this, consider a curve $\beta$ in the boundary of $M \cut S$. Using the JSJ theory summarized in \Cref{sec:jsj}, it is clear that $\beta$ is homotopic to at most one other curve in the boundary of $M\cut S$, up to homotopy in $\partial (M\cut S)$, unless $\beta$ is homotopic into a pared solid torus complementary component of $\mc A$. But we claim that the number of pared solid tori pieces of $(M \cut S)\cut \mc A$ is bounded by $\vert \chi(S) \vert$. This follows from the bound on the number of isotopy classes of disjoint curves on $S$ and the fact that if an annulus $A$ in $\mc A$ cobounds two pared solid tori components $X_1$ and $X_2$ of $(M \cut S) \cut \mc A$, then $X_1 \cup_A X_2$ is a (non solid torus) Seifert manifold, contradicting that $M$ is atoroidal.
This completes the proof.
\end{proof}

\begin{convention}
Given $A,B \ge 0$, we will simplify notation in what follows by writing $A \prec B$ to mean that there is a constant $c>1$, depending only on $\chi(S)$, so that $A \le c \cdot B + c$. When there is a coarse inequality that depends on other factors, like the core complexity $\mf c(M\cut S)$, we will make this dependence explicit in the notation.
\end{convention}

\subsection{Big volume}

Again let $p \colon Q \to M$ be the quasi-Fuchsian cover corresponding to $S$ and fix a lift $S \to Q$ of the inclusion $S \to M$. Applying \Cref{prop:bounded_length}, to obtain curves $\alpha$ and $\beta$ in $\C(S)$ and let $\alpha^*$ and $\beta^*$ be their geodesic representatives in $Q$ (these geodesics have the same lengths as their images in $M$). 

Consider a pleated surface $f \colon S \to Q$ homotopic to the inclusion realizing $\alpha$ (see \Cref{sec:pleated}). Recall that there is a constant $B\ge 1$, depending only on $\vert \chi(S) \vert$, called the \emph{Bers constant}, so that any hyperbolic structure on $S$ has a pants decomposition of length less than $B$ (see \cite{buser2010geometry}). Hence, we can let $P_s$ be a pants decomposition of $S$ whose length is less than $B$ with respect to the pullback hyperbolic metric from $f \colon S \to Q$. Then by the Collar Lemma (\cite{keen1974collars})
the intersection number $i(\alpha, P_s)$ is bounded by a constant $d_1\ge 0$ that depends only on $\mf c(M\cut S)$ and $|\chi(S)|$, since these are curves of bounded length on $S$. 

We can similarly finds a pants decomposition $P_u$ whose intersection number with $\beta$ is also bounded by the constant $d_1$.

Finally, we recall Hempel's bound \cite{He} (generalize by Schleimer \cite{schleimer2011notes} to the case with boundary) that for any essential multicurves $x,y$ on a surface $Y$, $d_{\C(Y)}(x,y) \le 2 \log_2(i(x,y)) +2$. Finally, we set
\[
D_{M\cut S} = \max\{d,d_1, 10\},
\]
which, from the preceding discussion, depends only on $|\chi(S)|$ and $\mf c(M \cut S)$. (Here, $d$ is the constant from \Cref{prop:bounded_length}.) The constant $10$ is to guarantee that $d_{\C(Y)}(\gamma,\delta) \le D_{M\cut S}$, whenever $\gamma$ and $\delta$ are essential curves on $S$, $Y$ is a subsurface of $S$, and $i(\gamma,\delta) \le D_{M\cut S}$.

Combining the work above with the main results of Brock \cite{Brock1}, we prove:

\begin{theorem}[Volume]\label{th:vol}
Let $M$ be a closed hyperbolic $3$-manifold, $\varphi$ an almost pseudo-Anosov flow on $M$, and $S$ a closed surface that is transverse to $\varphi$ but not a fiber.
Then
\[
d_{\C(S)}(c^s, c^u) \prec \vol(M) + D_{M\cut S}
\]
\end{theorem}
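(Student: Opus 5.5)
The plan is to transfer everything to the quasi-Fuchsian cover $p\colon Q\to M$ and use Brock's volume estimate, which bounds the volume of the convex core of $Q$ below by the distance in the pants graph between short pants decompositions at the two ends. First, reduce the curve-graph distance to a statement about the bounded-length pants decompositions $P_s$ and $P_u$ constructed above. By \Cref{prop:bounded_length} and the choice of $P_s$, we have $i(c^s,\alpha)\le d$ and $i(\alpha,P_s)\le d_1$. Hempel's bound then gives
\[
d_{\C(S)}(c^s,P_s)\le d_{\C(S)}(c^s,\alpha)+d_{\C(S)}(\alpha,P_s)\le 2D_{M\cut S}.
\]
The same argument bounds $d_{\C(S)}(c^u,P_u)$. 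Hence
\[
d_{\C(S)}(c^s,c^u)\le d_{\C(S)}(P_s,P_u)+4D_{M\cut S},
\]
and it suffices to show that $d_{\C(S)}(P_s,P_u)\prec \vol(M)$.

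Next, relate $P_s,P_u$ to the convex core of $Q$. Both are realized by pleated surfaces homotopic to the inclusion, so their geodesic representatives have length at most $B$ in $Q$. The pants graph dominates the curve graph coarsely, and $d_{\C(S)}(P_s,P_u)$ is coarsely bounded by the number of bounded-length curves that a geodesic or hierarchy path between them must cross. I would follow Brock's argument in \cite{Brock1}. Choose a sequence of pleated surfaces interpolating between the realizations of $P_s$ and $P_u$; Brock's result shows that the region of $Q$ between them contains a collection of pairwise disjoint, embedded regions of uniformly bounded-below volume, and that the number of these regions is at least a linear function of $d_{\C(S)}(P_s,P_u)$ (in fact of the pants distance, which dominates it). Equivalently, the number of distinct bounded-length closed geodesics in $Q$ lying in the convex core is $\succ d_{\C(S)}(P_s,P_u)$. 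This is the step I would implement by citing Brock's estimate in its "bounded length curves" form rather than its usual end-invariant form, since $P_s,P_u$ need not be the Bers-short pants decompositions at the ends of $Q$.

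Finally, push down to $M$, which I expect to be the main obstacle, since the covering $p$ is infinite and volume in $Q$ need not bound volume in $M$. My plan is to count short geodesics instead of volume. The bounded-length geodesics in $Q$ found above, which are at least linear in $d_{\C(S)}(P_s,P_u)$ in number, all lie in a region of $Q$ that is (coarsely) the image of a bounded-diameter family of pleated surfaces homotopic to $S$. By \Cref{lem:bounded_hom}, at most a uniform number of them map to the same geodesic in $M$, so $M$ contains $\succ d_{\C(S)}(P_s,P_u)$ distinct closed geodesics of length at most $B'$. A standard thick–thin argument then bounds the number of closed geodesics of length at most $B'$ in a closed hyperbolic manifold linearly by $\vol(M)$: thick-part geodesics have disjoint embedded neighborhoods up to bounded multiplicity, and each Margulis tube contributes boundedly many short geodesics, namely powers of the core up to length $B'$, with the tube's volume bounded below when the core is not too short. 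Combining these estimates yields $d_{\C(S)}(c^s,c^u)\prec \vol(M)+D_{M\cut S}$. The delicate points are extracting from \cite{Brock1} a count of distinct short geodesics, rather than just a volume bound in $Q$, and controlling multiplicities in very thin tubes.
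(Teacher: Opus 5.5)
Your proposal follows the paper's proof essentially step for step: the Hempel reduction to $P_s,P_u$, Brock's bound $d_{\mc P(S)}(P_s,P_u)\prec\#\mc G_{\le B}(Q)$ used in its $1$--Lipschitz-surface form rather than for the Bers-short pants at infinity, the descent to $M$ via \Cref{lem:bounded_hom}, and a packing bound $\#\mc G_{\le B}(M)\le C\cdot\vol(M)$, which the paper simply cites as \cite[Lemma 4.8]{Brock1}. The one inaccuracy is in your sketch of that packing step: a very short core has unboundedly many powers of length at most $B'$ (about $B'/\ell$ of them, while the tube volume can stay bounded, e.g.\ under Dehn filling), so it is false that each Margulis tube contributes boundedly many geodesics, and the thin-part count should be of primitive geodesics instead.
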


\begin{proof}
By the discussion preceding the statement of the theorem, it suffices to prove that 
\[
d_{\C(S)}(P_s,P_u) \prec \vol(M).
\]

For this, first let $\mc P(S)$ denote the pants graph of $S$. This is the graph whose vertices are isotopy classes of pants decompositions of $S$, where two pants decompositions are joined by an edge if they differ by an elementary move (see \cite{Brock1} for details). For us, it suffices to know that the coarse map $\pi_S \colon \mc P(S) \to \mc C(S)$ that associates each pants decomposition to its underlying set of curves is $2$--Lipschitz. 

Also, given a hyperbolic manifold $N$, let $\mc G_{\le L}(N)$ be the set of closed geodesics in $N$ that have length at most $L$. As before, let $B$ be the Bers constant for $S$. Brock shows in \cite[Lemma 4.2]{Brock1} that 
\[
d_{\mc P(S)}(P_s,P_u) \prec  \# \mc G_{\le B}(Q),
\]
where, as before, $p \colon Q\to M$ is the associated quasi-Fuchsian cover.
Note that the statement of \cite[Lemma 4.2]{Brock1} is for pants decompositions that are short on the hyperbolic structures `at infinity' of $Q$, but his initial reduction is to Bers short pants decompositions obtained from $1$--Lipschitz hyperbolic surfaces (see his Lemma 4.7). Alternatively, one could apply \cite[Lemma 4.3]{Brock1} after repeating his interpolation argument used in the proof of Lemma 4.2, but the argument would be exactly the same. 

We next apply \Cref{lem:bounded_hom}, together with the relation to curve graph distance, to obtained
\[
d_{\mc C(S)}(P_s,P_u) \prec  \# \mc G_{\le B}(M).
\]
The proof is now completed with a standard packing argument. For example,  \cite[Lemma 4.8]{Brock1} establishes that there exists a constant $C\ge 1$, independent of $M$, so that 
\[
\# \mc G_{\le B}(M) \le C \cdot \vol(M).
\]
Combining the inequalities completes the proof.
\end{proof}

\subsection{Big circumference}
The goal of this section is to prove:

\begin{theorem}[Circumference]
\label{th:circum}
Let $M$ be a closed hyperbolic $3$-manifold, $\varphi$ an almost pseudo-Anosov flow on $M$, and $S$ a closed surface that is transverse to $\varphi$ but not a fiber. Then there is a constant $D \ge 0$, depending only on $|\chi(S)|$ and $\mf c(M\cut S)$, so that 
\[
d_{\C(S)}(c^s, c^u) \prec  \ell(\gamma) + D,
\]
 where $\gamma$ is any closed geodesic that intersects $S$ essentially.
\end{theorem}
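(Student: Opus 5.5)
The plan is to reduce to a bound on $d_{\C(S)}(P_s,P_u)$ in terms of $\ell(\gamma)$, where $P_s,P_u$ are the Bers-short pants decompositions built in the discussion before \Cref{th:vol}. By \Cref{prop:bounded_length} and the choice of $P_s,P_u$, we have $i(c^s,\alpha)\le d$ and $i(\alpha,P_s)\le d_1$, and similarly for $c^u,\beta,P_u$. Hempel's bound then gives $d_{\C(S)}(c^s,P_s)\le 2D_{M\cut S}$ and $d_{\C(S)}(c^u,P_u)\le 2D_{M\cut S}$. So it suffices to show $d_{\C(S)}(P_s,P_u)\prec \ell(\gamma)$, and then take $D$ to be a multiple of $D_{M\cut S}$.

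The main step uses the geometry of the quasi-Fuchsian cover $Q$, following the approach of Biringer--Souto and Aougab--Biringer et al.\ for fibered manifolds. Since $\gamma$ meets $S$ essentially, lift $\gamma$ to the universal cover. Its translates by $\pi_1(M)$ cross the lifts of $S$. A fundamental segment of the axis of $\gamma$ crosses a lift $\wt S$, and then crosses $g\wt S$ for $g$ the deck transformation of $\gamma$. Projecting to $Q$, the convex core $CC(Q)$ is the quotient of the convex hull of the limit set of $\wt S$. The key claim is that $\ell(\gamma)$ coarsely bounds a quantity comparable to $d_{\C(S)}(P_s,P_u)$. The most direct version: the two $1$--Lipschitz pleated surfaces $f_s$ and $f_u$ realizing $\alpha$ and $\beta$ both lie within bounded distance of the embedded copy of $S$ in $M$ (after lifting to $Q$), in the sense that each meets a fixed compact region. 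Meanwhile, the model-manifold theory of Minsky and Brock--Canary--Minsky shows that the distance in $Q$ between the Bers-short surfaces is bounded below by a linear function of $d_{\C(S)}(P_s,P_u)$. Here one uses that curve graph distance is a lower bound for the number of disjoint "blocks" separating the two surfaces, with each block of definite thickness.

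The main obstacle is relating the geometry of $Q$ to a single geodesic $\gamma$ in $M$. The approach I would try first is as follows. Lift the closed curve $\gamma$ to a path $\hat\gamma$ in $Q$ starting on $S$. Because $\gamma$ crosses $S$ essentially, $\hat\gamma$ is a path in $Q$ of length $\ell(\gamma)$ from $S$ to another lift $p^{-1}(S)\ni$ component $S'\neq S$ in $Q$. That component is a quasi-Fuchsian non-compact surface, and it must intersect $CC(Q)$ and separate it in a controlled way. I would argue that $S'$ must lie "beyond" one of the short pants decompositions, using the fact that $c^s$ and $c^u$ (hence $P_s,P_u$ up to bounded distance) are carried by the two sides of $S$ that $\varphi$ flows into and out of. Concretely, $S'$ contains an elevation of $c^s$ or $c^u$ on the side facing $S$. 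Then the bounded-length curves $\alpha^*$ and $\beta^*$ lie on opposite sides of $S$ at distance at most $\ell(\gamma)+O(1)$, using \Cref{lem:bounded_hom} to control multiplicity.

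Failing that, I would fall back on a simpler argument, as in Biringer--Souto. The loop $\gamma$, together with bounded-length curves at its endpoints, gives a path in $Q$ between thick points near $\alpha^*$ and $\beta^*$ of length $\prec \ell(\gamma)$. By Minsky's bounded geometry/short curve results (the Lipschitz model), a path of length $R$ in $CC(Q)$ passes through at most $\prec R$ blocks. Hence the curve graph distance between the short curves it meets is $\prec R$. Combining this with the reduction in the first paragraph gives the theorem.
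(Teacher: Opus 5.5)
There is a genuine gap. Your reduction to bounding $d_{\C(S)}(P_s,P_u)$ is fine and parallels the paper, and your model-manifold claim (curve-graph distance is coarsely bounded by distance in $Q$) is plausible. The problem is that both of your routes rest on an unsupported claim: that $\gamma$ gives a path in $Q$ of length $\prec \ell(\gamma)$ between the surfaces through $\alpha^*$ and $\beta^*$, or equivalently that these geodesics lie within $\ell(\gamma)+O(1)$ of $S$.

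Nothing in your argument supports this. One period of the lift of $\gamma$ only joins the lift of $S$ to another elevation $S'$. The embedded surface $S$ is an arbitrary transverse surface, and its position in $Q$ relative to $\alpha^*$ and $\beta^*$ is uncontrolled. The assertion that $S'$ ``contains an elevation of $c^s$ or $c^u$ on the side facing $S$'' is not proved, and it would not give a distance bound even if true.

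More fundamentally, the lift $\wt\gamma\subset Q$ is infinite. The part of it running between the two pleated surfaces may cover $\gamma$ many times, for instance when it crosses thick regions of $Q$ produced by large subsurface projections. So the distance in $Q$ between the surfaces realizing $\alpha$ and $\beta$ need not be $\prec\ell(\gamma)$. This is exactly the subtlety the paper flags: an argument that measures only lengths in $Q$ cannot see $\ell_M(\gamma)$.

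The missing idea is to do the counting in $M$, not $Q$. Let $G$ be the graph whose vertices are curves $\delta\in\C(S)$ realized by a loop of length at most $L_S$ on some hyperbolic $X$ with a $1$--Lipschitz map $f\colon X\to M$ homotopic to the inclusion, such that $f(\delta)$ meets $\gamma$. Join two vertices when they are realized on the same $f$, so adjacent vertices are within $20$ in $\C(S)$.

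A unit subarc of $\gamma$ meets only boundedly many homotopy classes of such loops: in the thick part by packing, and in the thin part because they are powers of the tube core. \Cref{lem:bounded_hom} converts each class in $M$ into boundedly many curves on $S$, so $\#G^0\prec\ell_M(\gamma)$.

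Next, a $1$--Lipschitz sweepout $f_t\colon X_t\to Q$, from a surface where $\alpha$ has length at most $L$ to one where $\beta$ does, meets $\wt\gamma$ for every $t$. The short loops through these points project to vertices of $G$ and form a connected chain from $\Delta_0$ to $\Delta_1$, hence a path with at most $\#G^0$ edges. Repeated passes of $\wt\gamma$ over the same part of $\gamma$ are counted only once, and this is what absorbs the unbounded length of $\wt\gamma$ in $Q$.

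Finally, bounded intersection numbers for bounded-length curves on the initial and terminal simplicial hyperbolic surfaces relate $\Delta_0,\Delta_1$ to $\alpha,\beta$. \Cref{prop:bounded_length} then relates $\alpha,\beta$ to $c^s,c^u$.
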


The strategy of the proof is similar to the volume argument, where Brock's result is replaced by 
\cite[Proposition 4.4]{aougab2022covers}
 of Aougab--Patel--Taylor. 
 Here, however, there is an additional subtlety since the lift of $\gamma$ to the quasi-Fuchsian cover $Q$ is infinite, and so we must adapt the argument to work in $M$. Throughout the argument, we will refer the reader to \cite{aougab2022covers} for details, although many noneffective versions of these facts were known earlier.

\begin{proof}
For the surface $S$, let $L_S = 2 \log(|\chi(S)|+2) \ge 0$ be the constant (depending only on $|\chi(S)|$) such that for any hyperbolic structure on $S$ and any point $p\in S$, there is an essential simple loop through $p$ whose length is at most $L_S$. See \cite[Lemma 3.1]{aougab2022covers}.

Fix a closed geodesic $\gamma$ that essentially intersects $S$, i.e. every map $S \to M$ homotopic to the inclusion intersects $\gamma$. Let $G$ be the graph whose vertices are curves in $\C(S)$ having the property that $\delta \in G^0$ if there is a hyperbolic structure $X$ on $S$ and a $1$--Lipschitz  map $f \colon X \to M$, homotopic to the inclusion $S \to M$, so that $\delta$ can be realized by a loop of length at most $L_S$ in $X$ whose image meets $\gamma \subset M$. Two curves $\delta_1$ and $\delta_2$ are joined by an edge of $G$ if they can be realized (with these properties) using the same map $f \colon X \to M$. Since, in this case, $\delta_1$ and $\delta_2$ can be realized by loops of length at most $L_S$ in some fixed hyperbolic structure $X$, we record the fact that $d_{\C(S)}(\delta_1,\delta_2) \le 20$ (\cite[Lemma 3.5]{aougab2022covers}).

Note that for any subarc $p \subset \gamma$ of length at most one, the number of distinct homotopy classes of loops (that are homotopic to simple closed curve on $S$) meeting $p$ of length at most $L_S$ is bounded by a constant $C>0$ that does not depend on $M$. (For this, see Lemma 3.6 in \cite{aougab2022covers}; if $p$ is deep enough into the thin part, where the lemma does not apply, any such loop of length less than $L_S$ must be homotopic to the core of associated Margulis tube. This is where the `homotopic to a simple closed curve on $S$' is used.) 
Moreover, using \Cref{lem:bounded_hom}, the number of distinct vertices of $G$ that correspond to loops that can be realized in $S$ with images meeting $p$ is bounded by a constant depending only on $|\chi(S)|$. 
Hence, 
\[
\#G^0 \prec \ell_M(\gamma).
\]

Next, let 
\begin{align}\label{sweep}
(f_t \colon X_t \to Q)_ {t \in [0,1]}
\end{align}
be a $1$--Lipschitz sweepout in the homotopy classes of the lift $S \to Q$, such that 
$\alpha^* \subset f_0(X_0)$ and $\beta^* \subset f_1(X_1)$. Existence of such a sweepout is essentially due to Canary; see Section 2 of \cite{aougab2022covers}. For each $t \in [0,1]$, $f_t(X_t)$ meets $\wt \gamma$ (the lift of $\gamma$ to $Q$) and hence there exists a nonempty subset $\Delta_t = \{\delta_t\}\subset G^0$ of vertices that are realized by a loops in $X_t$ of length at most $L_S$ mapping under $f_t$ to loops through points of $\wt \gamma$. Taking the induced subgraphs, the $\Delta_t$ are complete subgraphs of $G$ that determine a path from $\Delta_0$ to $\Delta_1$ in $G$; in particular, $\Delta_0$ and $\Delta_1$ are contained in the same component of $G$. (Here, we are using that the set of $t\in [0,1]$ for which $\delta \in \Delta_t$ is closed.)
Hence there is a path from $\Delta_0$ to $\Delta_1$ of length at most $\#G^0$. Since edges in $G$ correspond to curves whose distance in $\C(S)$ is at most $20$, we conclude that
\[
d_{\C(S)}(\Delta_0, \Delta_1) \prec \#G^0 \prec \ell_M(\gamma).
\] 

It now suffices to show that $d_{\C(S)}(c^s, \Delta_0) \prec D$ and $d_{\C(S)}(c^u, \Delta_1) \prec D$, for some constant $D$ depending on $\vert \chi(S) \vert$ and $\mf c(M\cut S)$. Moreover, in light of \Cref{prop:bounded_length}, we may replace $c^u$ by $\alpha$ and $c^s$ by $\beta$. 

For this, we need to know that the construction of the sweepout (\ref{sweep}) starts from a simplicial hyperbolic surface $f'_i \colon X'_i \to Q$, where $i=0,1$, such that $\ell_{X'_0}(\alpha)\le L$ and $\ell_{X'_1}(\beta)\le L$, where $L$ is from \Cref{prop:bounded_length}. Then $X_i$ is the hyperbolic metric that uniformizes the singular hyperbolic metric $X'_i$. Let $x_i$ be a loop in $X_i$ whose length is at most $L_S$ for which $f_0(x_i)$ passes through $\gamma$, i.e. $x_i \in \Delta_i$. Then since $X_i \to X'$ is $1$--Lipschitz (by \cite{ahlfors1938extension}), the curve $x_i$ has length at most $L_S$ in $X'$. By \cite[Lemma 3.3]{brock2000continuity}, the intersection number between $x_0$ and $\alpha$ (as well as the intersction number between $x_1$ and $\beta$) is bounded by a constant depending only on $|\chi(S)|$ and $L$. 
Converting from intersection numbers to curve graph distances then completes the proof.
\end{proof}

\begin{remark}[Electric length]
By further adapting arguments from \cite{aougab2022covers}, it is not hard to generalized \Cref{th:circum} to show that under the same hypotheses, 
\[
d_{\C(S)}(c^s, c^u) \prec  \ell^\epsilon_M(\gamma) + D,
\]
 where $\ell^\epsilon_M(\gamma)$ is the \emph{$\epsilon$--electric length} in $M$, which is essentially the length of the portion of $\gamma$ that stays outside of $\epsilon$--thin part of $M$. Since our focus here is recovering geometric information about $M$ from \Cref{prop:bounded_length}, we omit the details.
\end{remark}

\subsection{Short curves}
Finally, we give a short proof of the following:
\begin{theorem}[Short curves] \label{th:short_curves}
Let $M$ be a closed hyperbolic $3$-manifold with an almost pseudo-Anosov flow $\varphi$. Let $S$ be a closed surface in $M$ that is transverse to $\varphi$ and let $Y \subset S$ be a 
subsurface of $S$. Then for any $\epsilon >0$, there is a $K = K(\epsilon, \chi(S)) \ge 0$, depending only on $\epsilon$ and $\chi(S)$, such that if
\begin{itemize}
\item $d_{\C(S)}(c^{s/u} ,\partial Y) \ge 2D_{M\cut S}+2$ and 
\item $d_{\C(Y)}(c^s,c^u) \ge K + 4 \cdot  D_{M \cut S}$,
\end{itemize}
then 
\[
\ell_M(\partial Y) \le \epsilon.
\]
\end{theorem}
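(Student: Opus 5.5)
We will work in the quasi-Fuchsian cover $p\colon Q\to M$ associated to $S$. There we replace $c^s$ and $c^u$ by the Bers-short pants decompositions $P_s$ and $P_u$ constructed before \Cref{th:vol}. Then we invoke the short curve theorem for Kleinian surface groups (Minsky's a priori bounds together with the Brock--Canary--Minsky length estimates, in the form: for every $\epsilon>0$ there is $K_0=K_0(\epsilon,\chi(S))$ such that if two pants decompositions $P,P'$ of bounded length in $Q$ satisfy $d_{\C(Y)}(P,P')\ge K_0$, then $\ell_Q(\partial Y)\le \epsilon$). Since $p$ is a local isometry, $\ell_M(\partial Y)\le \ell_Q(\partial Y)$, and the conclusion follows.

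First we transfer the hypotheses from $c^{s/u}$ to $P_{s/u}$. By \Cref{prop:bounded_length} and the construction, $i(c^s,\alpha)\le d$ and $i(\alpha,P_s)\le d_1$. By Hempel's bound and the choice of $D_{M\cut S}$, this gives $d_{\C(S)}(c^s,P_s)\le 2D_{M\cut S}$, and similarly for $c^u$ and $P_u$. Combined with the first hypothesis $d_{\C(S)}(c^{s/u},\partial Y)\ge 2D_{M\cut S}+2$, every geodesic from $c^s$ to $P_s$ (passing through $\alpha$) has all its vertices at distance at least $1$ from $\partial Y$. Hence each such vertex meets $Y$ essentially, and each of $c^s,\alpha,P_s$ cuts $Y$.

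We then apply the bounded geodesic image / Lipschitz property of $\pi_Y$ along the short chain $c^s\to\alpha\to P_s$ rather than along curve graph geodesics. Consecutive terms have intersection at most $D_{M\cut S}$, all of them project nontrivially to $Y$, and so by the defining property of $D_{M\cut S}$ we get $d_{\C(Y)}(c^s,\alpha)\le D_{M\cut S}$ and $d_{\C(Y)}(\alpha,P_s)\le D_{M\cut S}$. The same holds for $u$. The triangle inequality then yields
\[
d_{\C(Y)}(P_s,P_u)\ge d_{\C(Y)}(c^s,c^u)-4D_{M\cut S}\ge K.
\]
We choose $K=K_0(\epsilon,\chi(S))$.

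The main obstacle is the last step: justifying the Kleinian input for pants decompositions that are short on $1$--Lipschitz pleated surfaces, rather than on the conformal boundary of $Q$. For this we would argue as in the volume proof. The pleated surfaces realizing $\alpha$ and $\beta$ lie in the convex core, and Bers-short curves on them have bounded length in $Q$. By Minsky's a priori bounds and the ELC model, large $d_{\C(Y)}$ between two bounded-length markings forces $\partial Y$ to be $\epsilon$-short: the subsurface projection of the ending invariants to $Y$ differs from $d_{\C(Y)}(P_s,P_u)$ by a uniform constant, because bounded-length curves lie coarsely on the hierarchy path. A secondary subtlety is ensuring $P_s$ and $P_u$ cut $Y$; if some component lies in $\partial Y$, it is still bounded length and the same estimate applies, possibly after adjusting $K$.
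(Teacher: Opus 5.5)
Your proposal is correct and is essentially the paper's proof: pass to $Q$, move from $c^{s/u}$ to the Bers-short pants decompositions $P_{s/u}$ through $\alpha,\beta$ using Hempel's bound and the defining property of $D_{M\cut S}$ (losing $2D_{M\cut S}$ on each side in $\C(Y)$), and then apply Minsky's bounded-projection theorem \cite[Theorem B]{minsky2001bounded}. The ``main obstacle'' you flag is not one, because that theorem is already stated for curves of bounded length in $Q$, which Bers-short curves on $1$--Lipschitz pleated surfaces are, so no detour through ending invariants is needed. Your aside that $d_{\C(Y)}$ of the ending invariants agrees with $d_{\C(Y)}(P_s,P_u)$ up to a uniform constant is false in general (only an upper bound on $d_{\C(Y)}(P_s,P_u)$ holds), but it is unnecessary.
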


\begin{proof}
As before, let $Q$ be the quasi-Fuchsian cover of $M$ associated to $S$.

We begin by recalling that Minsky shows \cite[Theorem B]{minsky2001bounded} that for any $\epsilon >0$, there is a constant $K\ge0$ (depending only on $\epsilon$ and $|\chi(S)|$) such that if there are curves $\gamma,\delta$ in $\C(S)$ which essentially intersect $Y$ so that $d_{\mc C(Y)}(\gamma,\delta)\ge K$ and have length at most $B$ in $Q$, then $\ell_Q(\partial Y)\le \epsilon$. Of course, in this case, we also have $\ell_M(\partial Y)\le \epsilon$ since $Q \to M$ is $1$--Lipschitz.

We can apply this to curves $\gamma \subset P_s$ and $\delta \subset P_u$. Indeed, since $d_{\C(S)}(c^s, \gamma) \le 2D_{M\cut S}$ the first bullet in the theorem statement implies that $d_{\C(S)}(\gamma, \partial Y) \ge 2$ and so $\gamma$ essentially intersects $Y$. Similarly, the same argument also gives that $\alpha$, $\beta$, and $\delta$ also essentially intersect $Y$ since they are each within $2D_{M\cut S}$ from either $c^s$ or $c^u$ in $\C(S)$.

Moreover,
\begin{align*}
d_{\mc C(Y)}(c^s,\gamma) \le d_{\mc C(Y)}(c^s,\alpha) + d_{\mc C(Y)}(\alpha,\gamma) \le 2\cdot D_{M \cut S},
\end{align*}
as in the paragraph following the definition of $D_{M\cut S}$.
Similarly, $d_{\mc C(Y)}(c^u,\delta) \le 2 D_{M \cut S}$.

Finally, the triangle inequality gives
\begin{align*}
d_{\C(Y)}(\delta, \gamma) &\ge d_{\mc C(Y)}(c^s,c^u) - d_{\mc C(Y)}(c^s,\gamma) - d_{\mc C(Y)}(c^u, \delta)\\
&\ge (K + 4 \cdot  D_{M \cut S}) -  2 \cdot D_{M \cut S} - 2 \cdot D_{M \cut S}\\
& \ge K,
\end{align*}
and so Minsky's theorem directly applies. This completes the proof.
\end{proof}

\section{Examples and applications}
\label{sec:examples}

In this final section, we first give an application of our main theorem to finite depth foliations of hyperbolic $3$-manifolds and then conclude with a construction of flows transverse to depth one foliations where our main dynamical quantity $d_{\C(S)}(c^s,c^u)$ can be precisely controlled.

\subsection{Hyperbolic 3-manifolds with finite depth foliations}

The arguments in \Cref{sec:geom} and \Cref{sec:endgame} allow us to gain certain control of the geometry of hyperbolic 3-manifolds with a finite depth foliation, which might be of independent interest. For background on these foliations, we refer the reader to \cite{candel2000foliations}.
\smallskip

For the setup, let $M$ be a closed hyperbolic 3-manifold with a (cooriented) finite depth foliation $\FF$, which we assume to have depth at least one. This is because a depth zero foliation is a fibration over $S^1$, which is discussed in \Cref{sec:intro}.
For expository purposes, we make the simplifying assumption that the closed leaves $\FF^0$ of $\FF$ consist of a single closed connected surface $S$. In the general case, one needs to work within the individual components of $M \cut \FF^0$.

As before, we identify each component of $\partial_\pm (M \cut S)$ with $S$. In $M$, the ends of each depth one leaf $L$ spiral onto $S$ just as in the spinning construction from \Cref{sec:annuli}. So in $M\cut S$, each end of $L$ spirals onto the corresponding component of $\partial_\pm (M \cut S)$ in the same sense. This determines positive and negative ends of $L$.
In fact, the spinning process can be (non-uniquely) reversed by pushing neighborhoods of ends of $L$ into $\partial_\pm (M\cut S)$ (within collar neighborhoods of boundary components) thereby producing a surface $\Sigma$ properly embedded in $M \cut S$. This process is called de-spinning in \cite[Section 3]{landry2023endperiodic}.
Spinning $\Sigma$ back along $\partial_\pm (M\cut S)$ (as in \Cref{sec:annuli}) reproduces the leaf $L$ together with a subsurface $\Sigma_L \subset L$ associated to $\Sigma$. This subsurface is called a \emph{core} of $L$ and it determines the properly embedded surface $\Sigma$, up to proper isotopy, by isotoping its boundary back into $\partial_\pm (M\cut S)$ within a collar neighborhood. The process of going from a core subsurface $\Sigma_L \subset L$ to the properly embedded surface $\Sigma$ is also carefully described in
\cite[Section 3.2]{field2025lower}.  

In general, a core subsurface $C \subset L$ is \emph{minimal} if it 
minimizes $\vert\chi(C)\vert$ over all core subsurfaces. 
Set $\mathfrak{c}_{\FF} = \min\{\vert \chi(C) \vert\}$, where the minimum is over all cores $C$ of depth one leaves $L$ of $\mc F$.

\smallskip

Given a core $C$ of $L$ determining a properly embedded surface $\Sigma$ as above, there are naturally associated \emph{positive and negative junctures} $j^\pm \subset S$. These are the boundary components $j^\pm = \partial \Sigma \cap \partial_\pm (M\cut S)$ of the properly embedded surface $\Sigma$ considered as multicurves of $S$. Alternatively, $j^+$ is the multicurve of $S$ obtained by taking the components $\partial_+C$ of $\partial C \subset L$ that bound neighborhoods of the positive ends of $L$ and isotoping them upward into $\partial_+ (M\cut S)$ within a collar neighborhood of $\partial_+ (M\cut S)$. The negative juncture $j^-$ can be defined similarly.
When the core $C$ is minimal, we say that $j^\pm \subset S$ are \emph{minimal junctures}.

The following corollary expresses a connection between the topology of $\mc F$ and the geometry of $M$ without an explicit reference to a transverse flow.

\begin{theorem}[Junctures and geometry]\label{cor:finitedepth}
Let $M$ be a closed hyperbolic $3$-manifold with a finite depth foliation $\mc F$ so that $\mc F^0$ is a connected surface $S$. Suppose that $j^\pm  \subset S$ are minimal juncture on $S$.

There is a constant $c_S = c_S(\chi(S)) \ge 1$, depending only on $\chi(S)$, and a constant $D\ge 1$, depending only on $\chi(S)$ and $\mathfrak{c}_{\FF}$, so that the following inequalities hold:
\begin{enumerate}
\item $d_{\C(S)}(j^-, j^+) \le c_S \cdot \vol(M) + D$, and 
\smallskip
\item $d_{\C(S)}(j^-, j^+) \le c_S \cdot \ell_M(\gamma) + D$, where $\gamma$ is any closed geodesic that intersects $S$ essentially and $\ell_M(\gamma)$ is its length in $M$.
\end{enumerate}
\end{theorem}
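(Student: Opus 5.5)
The plan is to bypass the flow entirely and rerun the endgame of \Cref{sec:endgame} with the minimal junctures $j^\pm$ in place of the bounded-length curves $\alpha,\beta$ of \Cref{prop:bounded_length}. Let $N = M\cut S$, and let $\Sigma \subset N$ be the properly embedded surface determined by a minimal core $C$, so that $|\chi(\Sigma)| = \mf c_{\FF}$ and $j^\pm = \partial\Sigma \cap \partial_\pm N$. The whole argument reduces to one claim: each component of $j^\pm$ lies within $\C(S)$-distance bounded in terms of $\chi(S)$ and $\mf c_{\FF}$ of a curve whose geodesic length in $M$ is bounded by a constant of the same type. Granting this, the proofs of \Cref{th:vol} and \Cref{th:circum} apply verbatim. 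We take Bers-short pants decompositions $P_\pm$ from pleated realizations of these short curves, use the Collar Lemma and Hempel's bound to compare them in $\C(S)$ with $j^\pm$, and then invoke Brock's bound together with \Cref{lem:bounded_hom} and the packing argument for (1). For (2), the sweepout argument with the graph $G$ carries over unchanged.

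To prove the claim, first check the hypotheses of \Cref{prop:length-bound}. Since $C$ is minimal, $\Sigma$ is incompressible and boundary incompressible. A compression or boundary compression would produce a core of smaller $|\chi|$: compressing reduces complexity directly, and a boundary compressing disk can be pushed into the collar to shrink $C$ within $L$. This is the usual fact that minimal cores are essential, as in \cite{field2025lower}. We then split according to whether the relevant component $\partial_\pm N$ is opaque.

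If $\partial_+N$ (say) is opaque, \Cref{prop:length-bound} gives directly that each component of $j^+$ has length at most $C_2(\chi(S),\mf c_{\FF})$, and we are done. If $\partial_+N$ meets the window surface, \Cref{th:OWB} bounds the length of the window frame $\WF(N)\cap\partial_+N$ by $C_1(\chi(S))$. It then remains to show that $j^+$ has bounded intersection with, or distance from, some window-frame curve. This is the step I expect to be the main obstacle, since the flow-based argument of \Cref{cor:window-distance} is not available.

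For that step, I would try the following. Take an essential annulus $A$ of $\mc A$ meeting $\partial_+N$. Using the boundary incompressibility of $\Sigma$ and the incompressibility of $A$, isotope $\Sigma$ so that $\Sigma \cap A$ consists only of essential arcs and curves. Each arc of $\Sigma\cap A$ is then a spanning arc of $A$. The number of parallel classes of these arcs in $\Sigma$ is bounded by $|\chi(\Sigma)|$, and parallel arcs cobound rectangles in $\Sigma$ which, if numerous, would yield a product region. Minimality of the core should let us isotope $\Sigma$ to remove such a region, so that $i(j^+,\partial A)$ is bounded in terms of $\mf c_{\FF}$ alone.

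If this intersection count fails, a fallback is to use a transverse flow. By Gabai/Mosher, $\mc F$ admits an almost transverse pseudo-Anosov flow for which $\Sigma$ can be taken transverse. \Cref{prop:intersection}, together with the annulus lemmas of \Cref{sec:annuli}, then bounds $i(j^+,c^{u})$ and relates $c^u$ to the window frame. In this route $\mf c(M\cut S)\le \mf c_{\FF}$, and the conclusion follows from \Cref{thm:vol-circumference} combined with $d_{\C(S)}(j^\pm,c^{u/s})$ being bounded.
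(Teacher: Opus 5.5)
You have a genuine gap, and it is the same gap in both of your routes.

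In the flow-free route, the non-opaque case needs $d_{\C(S)}(j^+,\WF(N)\cap\partial_+N)$ to be bounded. The mechanism you propose for bounding $i(j^+,\partial A)$ cannot work as stated. The intersection number $i(j^+,\partial A)$ is an isotopy invariant, so no isotopy of $\Sigma$ can lower it, and ``isotoping $\Sigma$ to remove a product region'' gives nothing. You would have to replace $\Sigma$ by a non-isotopic surface and prove it is still the de-spinning of a core of a depth one leaf of the given $\FF$, of smaller complexity. Nothing in your sketch does this. The rectangles of $\Sigma$ cut along $\Sigma\cap\mc A$ have index zero, so arbitrarily many of them are compatible with $|\chi(\Sigma)|=\mf c_{\FF}$. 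The obvious move also fails: when $A$ is a product annulus, deleting such a rectangle from $C\subset L$ joins a positive end neighborhood to a negative one, so the result is not a core.

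Your opaque case has a milder version of the same problem. Boundary incompressibility of $\Sigma$ does not follow from ``pushing a boundary compressing disk into the collar''. In the paper, one first needs \Cref{lem:boundary-compressions} to isotope the disk to a product flow disk, and only then can one push along the flow to shrink the core. Note also that the paper never bounds $i(j^\pm,\WF(N))$. It bounds distance to the window frame through $c^{u/s}_p$, which is disjoint from the window frame (\Cref{cor:window-distance}, \Cref{rmk:jsj}, \Cref{lem:unbalanced-annuli}) and meets $j^\pm$ boundedly by \Cref{prop:intersection}.

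Your fallback is the paper's route, but it assumes exactly the step the paper's proof is about. Gabai--Mosher gives an almost pseudo-Anosov flow $\varphi$ transverse to $\FF$, i.e.\ to $S$ and to the leaves $L$, but not to the de-spun surface $\Sigma$. Transversality of $\Sigma$ requires $\partial_\pm C$ to lie, up to isotopy in $L$, in the escaping sets $\mc U^\pm$, and this is not automatic. Moreover, \Cref{prop:intersection} needs $\Sigma$ to be both transverse and boundary incompressible.

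The paper obtains this as follows. Take a core $C'\supset C$ with $\partial_\pm C'\subset\mc U^\pm$, and let $C_m$ have least complexity among such cores containing $C$. If $C\ne C_m$, an essential arc of $C_m$ outside $C$ gives a boundary compressing disk for $\Sigma_m$, which \Cref{lem:boundary-compressions} isotopes to a product flow disk. Pushing across it (as in \cite[Lemma 3.2]{field2025lower}) yields a smaller such core still containing $C$, a contradiction. So $\Sigma=\Sigma_m$ is transverse, and the same argument shows it is boundary incompressible. Only then do $\mf c(M\cut S)\le\mf c_{\FF}$, the bound on $d_{\C(S)}(j^\pm,c^{u/s})$ from \Cref{prop:intersection}, and \Cref{thm:vol-circumference} finish the proof.
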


To connect the corollary with the setup of our main theorems, we recall that by seminal (yet mostly unwritten) work of Gabai--Mosher, every finite depth foliation $\mc F$ of $M$ admits an almost pseudo-Anosov flow $\phi$ that is transverse to $\FF$. This result is currently being completed and expanded by Landry--Tsang \cite{LandryTsangStep1, LandryTsangStep2}.

\begin{proof}
Let $L$ be a depth one leaf of $\mc F$ and suppose that $C \subset L$ is a core of minimal complexity, i.e. $\vert \chi(C) \vert = \mf c_{\mc F}$. Moreover, we suppose that the minimal junctures $j^\pm$ from the statement of the corollary come from $C$. That is, if $\Sigma$ is the properly embedded surface in $N = M\cut S$ determined by $C$, then $j^\pm = \partial \Sigma \cap \partial_\pm N$. 

Now suppose that $\varphi$ is an almost pseudo-Anosov flow that is transverse to $\mc F$. The main point of the proof is to show that $\Sigma$ can be made transverse to $\varphi$, or more precisely, $C$ can be chosen in its isotopy class in $L$ so that the isotopy of $C$ moving $\partial_\pm C$ into $\partial_\pm N$ can be done transverse to $\varphi$. For this, first note that there exists a core $C' \subset L$ containing $C$ for which this is the case. This is because we can choose $C'$ so that $\partial_\pm C' \subset \mc U^\pm$. Here, $\mc U^\pm$ are the positive/negative escaping regions of $L$ as in \Cref{sec:int_bounds} consisting of points whose forward/backward orbits meet $\partial_\pm N$, respectively.

Now let $C_m$ be the smallest complexity core such that $C_m$ contains $C$, up to isotopy in $L$, and $\partial_\pm C_m \subset \mc U^\pm$.  Also, let $\Sigma_m \subset N$ be the properly embedded surface associated to the core $C_m$. Note that $\Sigma_m$ has the desired proper that it is transverse to $\varphi$. 

Suppose that $C \subset \mathrm{int}(C_m)$ but that $C$ and $C_m$ are not isotopic in $L$. Then there is a proper essential arc $\beta$ of $C_m$ contained in the complement of $C$. The homotopy that pushes $L \ssm C$ into $\partial_\pm N$ (within a collar neighborhood) shows that $\beta$ determines a boundary compressing disk for $\Sigma_m$. But by \Cref{lem:boundary-compressions}, after an isotopy of $C_m$ in $L$ (see \Cref{fig:junc}), the boundary compressing disk is isotopic to a product flow disk $D$ for $\Sigma_m$. As in \cite[Lemma 3.2]{field2025lower}, the product flow disk $D$ can be used to push a portion of $\Sigma_m$ along the flow into $\partial_\pm N$ resulting in a core $C'$ still containing $C$, up to isotopy, but with smaller complexity than $C_m$. This contradiction shows that we may assume that $C = C_m$, so that $\Sigma = \Sigma_m$ is transverse to $\varphi$. Moreover, the argument also implies that $\Sigma$ is boundary incompressible, since any boundary compression results in a smaller complexity core.

By \Cref{prop:intersection}, the distances $d_S(c^u, j^+)$ and $d_S(c^s,j^-)$ are bounded by a constant depending only on $\vert \chi(\Sigma)| = \mf c_{\mc F}$. Since $\mf c(M\cut S) \le \mf c_{\mc F}$, the required statement follows from \Cref{thm:vol-circumference}.
\end{proof}

Similarly, there is also a version of \Cref{thm:short-curves} in terms of the junctures.

\subsubsection*{Different transverse flows}
In the proof of \Cref{cor:finitedepth}, we recorded the fact that the curve graph distances $d_S(c^u, j^+)$ and $d_S(c^s,j^-)$ are bounded by a constant depending only on $\mf c_{\mc F}$. Here, $c^u$ and $c^s$ are the stable and unstable multicurves on $S$ for any fixed almost pseudo-Anosov flow that is transverse to $\mc F$. However, there may be many such flows that are \emph{not} orbit equivalent.
Since the minimal junctures $j^\pm$ are defined without reference to the flow, we have established the following:

\begin{corollary}
For $M$, $\mc F$, and $S$ as in \Cref{cor:finitedepth}, the set of stable multicurves on $S$ induced by all possible almost pseudo-Anosov flows transverse to $\FF$ is a bounded diameter subset of $\CC(S)$, where the bound depends only on $\mf c_{\mc F}$.
\end{corollary}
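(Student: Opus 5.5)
The plan is to read the corollary off from the bound recorded in the proof of \Cref{cor:finitedepth}, using the fact that the minimal junctures $j^-$ do not depend on the flow. Fix once and for all a depth one leaf $L$ of $\mc F$ and a minimal core $C \subset L$ with $|\chi(C)| = \mf c_{\mc F}$, and let $j^\pm \subset S$ be the associated minimal junctures. The point to keep track of is that this choice is made purely from the foliation $\mc F$, before any flow is chosen.

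Now let $\varphi$ be any almost pseudo-Anosov flow transverse to $\mc F$, with stable multicurve $c^s = c^s(\varphi)$. Repeat the argument in the proof of \Cref{cor:finitedepth} for this $\varphi$, with the same fixed core $C$. First enlarge $C$ to a core $C_m$ with $\partial_\pm C_m \subset \mc U^\pm$. Then use \Cref{lem:boundary-compressions} together with the complexity-decreasing push along product flow disks to conclude that, after isotopy in $L$, $C = C_m$. This shows that the associated surface $\Sigma$ is transverse to $\varphi$ and boundary incompressible. Since $C$ is unchanged up to isotopy in $L$, its negative juncture on $\partial_- N$ is still $j^-$ up to isotopy. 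This matters because the isotopy used in \Cref{lem:boundary-compressions} only moves $\Sigma$ by a proper isotopy, so it does not change $\partial\Sigma$ up to isotopy in $\partial N$.

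Next apply \Cref{prop:intersection} to $\Sigma$ and $\varphi$. It bounds $i(c^s_p, j^-)$ by a constant depending only on $|\chi(\Sigma)| = \mf c_{\mc F}$. Hempel's bound converts this into $d_{\C(S)}(c^s_p, j^-) \le k$, with $k$ depending only on $\mf c_{\mc F}$. Since $c^s$ and $c^s_p$ are disjoint multicurves, their union has curve graph diameter at most $1$, so $d_{\C(S)}(c^s, j^-) \le k+1$. The triangle inequality then bounds the diameter of the set of all stable multicurves, taken over all transverse flows, by $2k+2$.

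The step I expect to need the most care is the passage from principal to all stable curves. If some nonprincipal component of $c^s$ meets $j^-$, we have no direct intersection bound for it, but disjointness from $c^s_p$ already gives the distance bound above. A second check is that $S$ is not a fiber of $\varphi$, so that $c^s$ is nonempty and the statement is meaningful. This holds because $S$ is a compact leaf of a depth $\ge 1$ foliation, and a depth one leaf $L$ spirals onto $S$, so some orbits miss $S$.
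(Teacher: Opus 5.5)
This is the paper's argument. The minimal junctures $j^\pm$ are chosen from $\mc F$ alone. For every transverse flow, the proof of \Cref{cor:finitedepth} together with \Cref{prop:intersection} bounds $d_{\C(S)}(c^s,j^-)$ in terms of $\mf c_{\mc F}$, and the triangle inequality finishes. Your passage from $c^s_p$ to $c^s$, using that $c^s$ is a multicurve, is a detail the paper leaves implicit.

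One correction concerns your closing claim that $S$ is never a cross section, which is false. A depth one foliation can have a fiber as its compact leaf, for example when $M\cut S\cong S\times I$ and the depth one leaves are infinite cyclic covers of $S$. The error is harmless, because whether $S$ is a fiber depends only on $(M,S)$ and not on the flow. If $S$ is a fiber, every transverse flow has $c^s=\emptyset$ and the statement is vacuous. Otherwise $S$ is quasi-Fuchsian and $c^s\neq\emptyset$ for every transverse flow, which is exactly the case your argument handles.
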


\medskip

\subsection{Unbounded distance between $c^s$ and $c^u$}
\label{sec:unbounded}

In this subsection, we use endperiodic mapping tori to construct examples where the curve complex graph distance between $c^s$ and $c^u$, as in the statement of \Cref{thm:vol-circumference}, can be arbitrarily large. 
We start with some basics on endperiodic maps and Handel-Miller theory. For a detailed discussion, we refer the readers to the definitive treatment by Cantwell--Conlon--Fenley \cite{CCF19}. 

Let $L$ be an infinite-type surface with finitely many ends, all of which are non-planar.
Let $f \colon L \to L$ an atoroidal endperiodic map. To make our discussion as simple as possible, we further assume that $L$ has exactly two $f$-orbits of ends. The mapping torus of $f$ admits a canonical compactification, which we denote by $N = N_f$, in which the positive end of $L$ spirals around the positive boundary component $\partial_+N$ and the negative end of $L$ spirals around the negative boundary component $\partial_-N$ (see \cite[Section 3]{Fen97} or \cite[Section 3]{field2023end}). Our simplifying assumption implies that $\partial_\pm N$ are two homeomorphic closed surfaces.

Let $h \colon \partial_+N \to \partial_- N$ be a homeomorphism reversing the induced (Stokes') orientations, and let $M(h)$ be the closed $3$--manifold obtained by gluing $\partial_+N$ to $\partial_-N$ via $h$:
\[
M(h) = \frac{N}{x \sim h(x): \forall x \in \partial_+ N}.
\]

By construction, there is an induced depth one foliation $\mc F$ of $M(h)$ whose only closed leaf is the image of $\partial_\pm N$, which we denote $S$, and whose depth one leaves are the fibers of the mapping torus $M(h) \ssm S = N \ssm \partial N$, which are all parallel to $L$.

We say the gluing map $h$ \emph{mismatches the windows} 
if no distinct isotopy classes of simple closed curves on $\partial_w N$ (see \Cref{sec:jsj}) are identified by $h$. If $h$ mismatches the windows, $M(h)$ is atoroidal and we equip $M(h)$ with the unique hyperbolic metric (using Thurston's hyperbolization theorem). As above, the foliation $\FF$ is transverse to some almost pseudo-Anosov flow $\varphi$, and we let $c^{u}$ and $c^{s}$ 
be the induced stable and unstable multicurves on $S$. Note that $M(h) \cut S = N$ and, as before, we use the same notation to denote the induced multicurves $c^u \subset \partial_+N$ and $c^s \subset \partial_-N$. 

\smallskip
Given any standard hyperbolic metric on $L$ (i.e. a hyperbolic metric such that $L$ does not contain an embedded half-plane), $f$ is isotopic to a \emph{Handel-Miller representative} $f_{\hm}$ that preserves a pair of canonical geodesic laminations $\Lambda^\pm_{\hm}$, the \emph{Handel-Miller laminations} \cite[Theorem 4.54]{CCF19}. These are the complements of the escaping sets $\cU^\mp$ of $f_{\hm}$, respectively \cite[Lemma 4.71]{CCF19}, where $\mc U^\pm$ is the set of points whose forward/backward iterates under $f_\hm$ escape compact sets of $L$. These laminations are nonempty when $f_\hm$ is not a translation of $L$ (i.e. when $N$ is not a product).

The positive escaping set $\cU^+$ is naturally an infinite-cyclic cover of $\partial_+N$ under the $\Z$-action generated by $f_{\hm}\vert_{\cU^+}$, and $\Lambda_{\hm}^+\cap\cU^+$ descends to a lamination $\Lambda_\partial^+$ on $\partial_+N$. This lamination contains closed leaves, which arise in the following way. By \cite[Theorem 6.5]{CCF19}, every frontier leaf of $\Lambda_{\hm}^-$ is periodic under $f_{\hm}$ with one or two periodic points, and for every periodic point $x$ on such a frontier leaf, there exists a positive escaping periodic ray $r\subset\Lambda_\hm^+\cap\cU^+$ based at $x$. The ray $r$ is also periodic under $f_\hm$ and so $r$ covers a closed leaf $c$ of the lamination $\Lambda_\partial^+$, and every closed leaf arises in this way. It follows that the suspension of $x$ under the power of $f_\hm$ fixing $r$, which is a closed curve in $N$, is homotopic in $N$ to $c \subset \partial_+ N$.
Similarly, we can define $\Lambda_\partial^-$ as the lamination obtained from $\Lambda^-\cap\cU^-$ under the covering $\cU^-\to\partial_-N$. We denote by $c^\pm_{\hm}$ this collection of closed leaves of $\Lambda_\partial^\pm \subset \partial_\pm N$.

\begin{lemma}\label{lem:HMLaminations}
 Let $M(h)$ be the closed manifold constructed above and let $\varphi$ be any almost pseudo-Anosov flow on $M(h)$ that is transverse to $\FF$. Then up to isotopy $c^u \subset \partial_+N$ contains $c^+_{\hm}$ and $c^s \subset \partial_- N$ contains $c^-_{\hm}$.
\end{lemma}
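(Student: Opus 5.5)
We will show that each closed leaf $c$ of $c^+_{\hm}$ is isotopic in $\partial_+N$ to a component of $c^u$; the statement for $c^-_{\hm}$ and $c^s$ is symmetric. We do this by passing to the universal cover and applying \Cref{lem:identify-multicurves} together with \Cref{rmk:outside}.

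\textbf{Step 1: an element $\gamma$ fixing a point of $\orb$.} By the discussion preceding the lemma, $c$ is covered by a periodic escaping ray $r\subset\Lambda^+_{\hm}\cap\cU^+$ based at a periodic point $x$ of $f_{\hm}$. The suspension $\omega_x$ of $x$ under the appropriate power of $f_{\hm}$ is a closed curve in $N$ homotopic to $c\subset\partial_+N$. Let $\gamma\in\pi_1(S)$ represent $c$, with $\partial_+N$ identified with $S$.

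It suffices to show that $\gamma$, acting on $\orb$, fixes a stable frontier leaf of $\Omega(\wt S)$ for a suitable lift $\wt S$. By \Cref{rmk:outside}, it is enough that $\gamma$ fix a point $p\in\orb$ whose separating frontier leaf is stable, i.e. whose leaf $\Theta^{-1}(\ell)$ lies below $\wt S$.

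\textbf{Step 2: producing a closed orbit of $\varphi$ in the class of $\omega_x$.} This is the main obstacle, since $f_{\hm}$ is defined only up to isotopy on $L$ and is a priori unrelated to the flow. I would compare the two via the first return map. Since $\varphi$ is transverse to $\FF$, its first return map $g$ to the depth one leaf $L$ is an endperiodic map isotopic to $f$; by \cite{landry2023endperiodic}, its laminations $\lambda^\pm$ correspond to the Handel--Miller laminations. More concretely, by \cite[Theorem 6.5]{CCF19} the frontier leaf of $\Lambda^-_{\hm}$ through $x$ is periodic, and $x$ is a Nielsen class of periodic points, which is an isotopy invariant. Hence $g$ has a periodic point in the same Nielsen class, which gives a closed orbit $\omega$ of $\varphi$ contained in $N$ and freely homotopic in $N$ to $\omega_x$, and therefore to $c$.

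Lifting to $\wt M$, the lift $\wt\omega$ invariant under $\gamma$ projects to a point $p\in\orb$ fixed by $\gamma$. Here I take $\wt N$ to be a lift of $N$ with boundary lift $\wt S=\partial_+\wt N$ stabilized by $\gamma$. Since $\omega\subset N$ and $\wt\omega\subset\wt N$ lies on the negative side of $\partial_+\wt N$, the orbit $\wt\omega$ does not cross $\wt S$. So $p\notin\Omega(\wt S)$, and the frontier leaf separating $p$ from $\Omega(\wt S)$ lies below $\wt S$, hence is stable by \Cref{rmk:outside}. Now \Cref{lem:identify-multicurves} shows that $c$ is homotopic, hence isotopic, to a component of $c^u$.

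\textbf{Step 3: distinctness.} Distinct components of $c^+_{\hm}$ are disjoint and nonisotopic. The components of $c^u$ produced in Step 2 are closed leaves of a foliation of $S$, so they are also pairwise disjoint. It follows that $c^+_{\hm}$ is, up to isotopy, a sub-multicurve of $c^u$.

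For Step 2, an alternative I would fall back on is to appeal to the fact that the first return map of $\varphi$ to $L$ is a Handel--Miller-type representative, in the spirit of \Cref{escaping_leaves}. In that case the principal rays of $\lambda^+$ correspond directly to the periodic escaping rays of $\Lambda^+_{\hm}$, and their flow saturations meet $\partial_+N$ in principal unstable curves.
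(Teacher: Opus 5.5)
Your reduction is the same as the paper's. You produce a closed orbit of $\varphi$ in $N$ that is freely homotopic in $N$ to the suspension of $x$, hence to $c$, then lift to $\wt M$ and apply \Cref{lem:identify-multicurves} with \Cref{rmk:outside}. Step~1, the last paragraph of Step~2, and Step~3 are fine.

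The gap is the sentence that carries all the weight: that the Nielsen class of $x$ ``is an isotopy invariant'', so that the first return map $g$ has a periodic point in the same class. This is exactly the content of the paper's internal Claim, and it does not come for free here. The leaf $L$ is noncompact and the isotopy from $f_{\hm}$ to $g$ need not be compactly supported. Along the lifted isotopy on $\wt L$, fixed points can escape to infinity and the class can disappear. Already on $\R$, the map $x\mapsto x/2$ is isotopic to the fixed-point-free map $x\mapsto x+1$. Even in the compact setting only essential classes persist, and you have not shown that the class of $x$ has nonzero index. Your appeal to \cite{landry2023endperiodic} for a correspondence between $\lambda^\pm$ and $\Lambda^\pm_{\hm}$, and the fallback in your last paragraph, do not close the gap. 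That correspondence is available only for flows without perfect fits, whereas the lemma concerns an arbitrary almost pseudo-Anosov flow transverse to $\mc F$.

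What is missing is a compactification that makes an index argument legitimate. The paper does the following:
\begin{enumerate}
\item Pass to a power of $f_{\hm}$ fixing $x$ and its half-leaves. Take the lift $\wt f_{\hm}$ fixing a lift $\wt x$, and the lift $\wt g$ obtained by lifting the isotopy. Work on the closed disk $\wt L\cup\partial\wt L$.
\item By \cite[Corollary 5]{CaCo13}, $\wt f_{\hm}$ and $\wt g$ induce the same homeomorphism of $\partial\wt L$.
\item $\wt f_{\hm}$ acts on $\partial\wt L$ with multi sink--source dynamics (at least four isolated fixed points, alternately sinks and sources), so $\wt g$ does too.
\item \cite[Lemma 4.9]{landry2023endperiodic} upgrades these boundary sinks and sources to sinks and sources of $\wt g$ on the closed disk.
\item A Lefschetz index count on the disk, as in \cite[Lemma 4.14]{landry2023endperiodic}, forces a fixed point of $\wt g$ in $\wt L$. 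With $2k\ge 4$ boundary fixed points, the interior index is $1-k\neq 0$.
\item That fixed point projects to the required periodic point of $g$, whose orbit is the closed orbit you need.
\end{enumerate}
With this substituted for the Nielsen-invariance assertion, the rest of your argument goes through.
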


When $\varphi$ belongs to the special class of flows `without perfect fits,' the lemma follows from the connection between spun pseudo-Anosov and Handel--Miller maps studied in \cite[Section 8]{landry2023endperiodic}. The proof we give works in the more general setting considered here.

\begin{proof}
We begin with the following claim:
\begin{claim}
For each period point of $f_\hm$ in $\Lambda_\hm^+ \cap \Lambda_\hm^- \subset L$, its suspension is a closed curve that is homotopic in $N$ to a closed orbit of $\varphi$. 
\end{claim}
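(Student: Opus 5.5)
The plan is to compare two endperiodic maps of $L$: the Handel--Miller representative $f_\hm$, and the first return map $g\colon L\to L$ of $\varphi$ to a depth one leaf $L$. Every periodic point of $g$ suspends to a closed orbit of $\varphi$ in $N$. So it suffices to show that each periodic point $x\in\Lambda^+_\hm\cap\Lambda^-_\hm$ of $f_\hm$, say with $f_\hm^n(x)=x$, lies in the same Nielsen class as some periodic point of $g^n$. Here two fixed points of maps isotopic through $F_t$ are in the same class if the path traced along the isotopy, together with the fixed-point data, makes their suspension loops freely homotopic in $N$.

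\emph{Step 1.} Since $\varphi$ is transverse to $\mc F$ and the depth one leaves are the fibers of $N\ssm\partial N$, the map $g$ is an endperiodic map isotopic to the monodromy $f$, hence to $f_\hm$.

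\emph{Step 2.} The point $x$ is an isolated fixed point of $f_\hm^n$ of nonzero index. Indeed, $x$ lies on leaves of both $\Lambda^\pm_\hm$, and near $x$ the map $f_\hm^n$ is a (possibly pronged) hyperbolic fixed point. Its index is $1-p\neq0$ if $f^n_\hm$ fixes the prongs, and $1$ otherwise, after passing to a further power if needed.

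\emph{Step 3.} Restrict attention to a compact core. The points of $\Lambda^+_\hm\cap\Lambda^-_\hm$ never escape in either direction. By Cantwell--Conlon--Fenley, they lie in a compact $f_\hm$-invariant set contained in a compact subsurface $K\subset L$ whose complement lies in $\cU^+\cup\cU^-$. Periodic points of $g$ similarly lie in a compact set. I would choose the isotopy from $f_\hm$ to $g$ so that no fixed points of the intermediate maps $F_t^n$ escape to the ends. The reason is that near the ends both maps are uniformly attracting or repelling along the escaping sets, since escaping points move monotonically through the product ends of $N$.

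\emph{Step 4.} Apply standard Nielsen theory on the compact region. The total index of the Nielsen class of $x$ is invariant under the isotopy. The Handel--Miller representative has no cancelling fixed points in the class of $x$: as in the Thurston/pseudo-Anosov case, each essential class contains a single point. So the class has nonzero index, and $g^n$ has a fixed point $y$ in it. The suspension of $y$ under $g$ is a closed orbit of $\varphi$, and by the definition of Nielsen equivalence it is homotopic in $N$ to the suspension of $x$.

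I expect Step 3 to be the main obstacle: Nielsen theory is being applied on an infinite-type surface, and fixed points could run off to the ends during the isotopy. If the compactness control is awkward, I would instead argue in the orbit space. Let $\gamma\in\pi_1(N)$ be the class of the suspension of $x$. Because $x$ is non-escaping in both directions, $\gamma$ is not homotopic into $\partial N$. By \Cref{rmk:outside} and the shadow arguments of \Cref{sec:annuli}, $\gamma$ then stabilizes a lift $\wt L$ and acts on $\Omega(\wt L)\subset\orb$ with a fixed point, obtained by a Brouwer/Lefschetz argument using the nonzero index from Step 2. A point of $\orb$ fixed by $\gamma$ is exactly a lift of a closed orbit of $\varphi$ freely homotopic to $\gamma$, and it lies in $N$ because its shadow meets $\Omega(\wt L)$.
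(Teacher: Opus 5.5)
There is a genuine gap, and it sits where you expected, in Step 3. Your reduction is the same as the paper's: after passing to a power, it suffices to show that the lift $\wt f_\varphi$ has a fixed point in $\wt L$, where $\wt f_\varphi$ is obtained by lifting the isotopy starting from the lift $\wt f_\hm$ that fixes $\wt x$.

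\emph{Step 3.} Isotopy invariance of the index of a Nielsen class needs the fixed points of the intermediate maps $F_t^n$ in that class to stay in a compact set. On the infinite-type surface $L$ you only assert this. Your heuristic about attraction and repulsion near the ends applies to $f_\hm$ and to $g$. It does not apply to the intermediate maps of an arbitrary isotopy, which need not be endperiodic; the isotopy also need not have bounded tracks. So fixed points in the class of $x$ can escape, and the index can change.

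\emph{Steps 2 and 4.} These also rest on unproved claims:
\begin{enumerate}
\item[(a)] that $x$ has a local pronged-saddle model, although $f_\hm$ is canonical only on the laminations and $x$ may lie on a frontier leaf next to escaping regions;
\item[(b)] that the Nielsen class of $x$ is the single point $x$.
\end{enumerate}

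\emph{The fallback.} It does not close the gap, for three reasons.
\begin{enumerate}
\item[(i)] The suspension class $\gamma$ does not stabilize $\wt L$. The suspension loop crosses $L$, so $\gamma$ carries $\wt L$ to a different lift, and $\Omega(\wt L)$ is generally not $\gamma$-invariant.
\item[(ii)] The assertion that $\gamma$ is not homotopic into $\partial N$ is false for exactly the periodic points this claim is used for. For periodic points on frontier leaves of $\Lambda^-_\hm$, the suspensions are homotopic to closed leaves $c\subset\partial_+N$.
\item[(iii)] The nonzero index from Step 2 is an index for $f_\hm$. Using it for the action of $\gamma$ on $\orb$ presupposes the transfer from $f_\hm$ to $\varphi$ that you are trying to prove.
\end{enumerate}

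\emph{The missing idea.} Transfer only the behavior at infinity, then do the index count for $\wt f_\varphi$ alone on a compact disk, with no intermediate maps at all.
\begin{enumerate}
\item[(1)] By Cantwell--Conlon \cite[Corollary 5]{CaCo13}, $\wt f_\hm$ and $\wt f_\varphi$ induce the same homeomorphism of $\partial\wt L$.
\item[(2)] $\wt f_\hm$ has multi sink--source dynamics on $\partial\wt L$: at least four isolated, alternating fixed points, namely the endpoints of the lifted Handel--Miller leaves through $\wt x$. Hence so does $\wt f_\varphi$.
\item[(3)] By \cite[Lemma 4.9]{landry2023endperiodic}, these boundary points are genuine sinks and sources for $\wt f_\varphi$ on the closed disk $\wt L\cup\partial\wt L$.
\item[(4)] With $2k\ge 4$ such boundary points, Lefschetz number $1$ on the disk forces the interior fixed points of $\wt f_\varphi$ to have total index $1-k\ne 0$. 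So $\wt f_\varphi$ fixes a point of $\wt L$.
\end{enumerate}
This replaces your Steps 2--4. Compactness comes from working on the closed disk, and the nonzero index comes from the boundary dynamics rather than a local model at $x$. No statement about Nielsen classes of $f_\hm$, or about fixed points along an isotopy, is needed.
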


Assuming the claim, let $c$ be a component of $c^+_{\hm}$. By the discussion above, $c$ is homotopic in $N$ to the suspension of a periodic point in $\Lambda_\hm^+ \cap \Lambda_\hm^-$, which by the claim is homotopic (again in $N$) to a closed orbit of $\varphi$ in $N$. By lifting to the universal cover and applying \Cref{lem:identify-multicurves} with \Cref{rmk:outside}, we see that $c$ is homotopic in $\partial_+N$ to a component of $c^u$. 

\smallskip
We now turn to the proof of the claim. Let $f_\varphi \colon L \to L$ be the first return map for $\varphi$, which is also endperiodic and isotopic to $f_\hm$.  If $p \in \Lambda_\hm^+ \cap \Lambda_\hm^-$ is a periodic point of $f_\hm$, then we replaces $f_\hm$ (and $f_\varphi$) with the appropriate power so that $f_\hm$ fixes $p$ as well as the half-leaves of $\Lambda^\pm_\hm$ based at $p$. Let $\wt p$ be a point in the preimage of $p$ under the universal cover $\wt L \to L$ and let $\wt f_\hm \colon \wt L \to \wt L$ be the lift of $f_\hm$ that fixes $p$. Further, let $\wt f_\phi$ be the lift of $f_\varphi$ obtained by lifting an isotopy from $f_\hm$ to $f_\varphi$ starting at $\wt f_\hm$. 
Using basic covering space theory, it suffices to show that $\wt f_\varphi$ also fixes a point of $\wt L$; the required closed orbit of $\varphi$ is the orbit through the projection to $L \subset N$ of the fixed point of $\wt f_\varphi$.

By \cite[Corollary 5]{CaCo13}, $\wt f_\hm$ and $\wt f_\phi$ induce the same action on the circle $\partial \wt L$. Moreover, $\wt f_\hm$ acts on $\partial L$ with multi sink-source dynamics, meaning that there are at least $4$ isolated fixed points that alternative between sources and sinks, and each such sink/source is the endpoint of a leaf of $\wt \Lambda^\pm_\hm$ through a fixed point of $\wt f_\hm$. These facts are essentially due to \cite{CCF19} but are stated explicitly in \cite[Lemma 8.1]{landry2023endperiodic}. Hence, the lift $\wt f_\varphi$ also acts with multi sink-source dynamics with the same set of fixed points and so by
\cite[Lemma 4.9]{landry2023endperiodic} the sources and sinks for the boundary action are in fact sources and sinks for the action on the closed disk $\wt L \cup \partial \wt L$.
Finally, we apply a standard fixed point index argument (as in \cite[Lemma 4.14]{landry2023endperiodic}) to see that our fixed lift of $f_\phi$ also fixes a point of $\wt L$. This completes the proof.
\end{proof}

If we treat the induced map $\partial_-N \to S$ as the identity, then the images of $c^+_\hm$ and $c^-_\hm$ in the surface $S \subset M(h)$ are $h(c^+_\hm)$ and $c^-_\hm$, respectively. Translating \Cref{lem:HMLaminations} to the surface $S$, we have the $c^u$ contains $h(c^+_\hm)$ and $c^s$ contains $c^-_\hm$, up to isotopy in $S$. 
Since the multicurves $c_{\hm}^\pm$ are canonically associated to the endperiodic mapping torus, it is now easy to produce examples of manifolds with flows where the lower bounds in the \Cref{thm:vol-circumference} are controlled.
  This is summarized in the following:

\begin{theorem}
Let $f \colon L \to L$ be an atoroidal endperiodic map of a surface $L$ with two ends, let $N$ be its compactified mapping torus, and let $c^\pm_{\hm} \subset \partial_\pm N$ be the associated Handel--Miller curves. 

As above, for any $h \colon \partial_+ N \to \partial_-N$ that mismatches the windows, the glued manifold $M(h)$ is hyperbolic and inherits a depth one foliation $\mc F$. For any almost pseudo-Anosov flow on $M(h)$ transverse to $\mc F$, we have
\[
d_S(c_\hm^-, h(c_\hm^+))  \le d_S(c^u, c^s) \le   d_S(c_\hm^-, h(c_\hm^+)) +2.
\]
\end{theorem}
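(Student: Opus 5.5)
The plan is to treat the hyperbolicity and foliation claims as already settled by the discussion preceding the statement, and to derive the two inequalities from \Cref{lem:HMLaminations} together with the definition of distance between subsets, $d_{\C(S)}(X,Y)=\diam(X\cup Y)$. Since the mismatch condition gives atoroidality, Thurston's hyperbolization makes $M(h)$ hyperbolic. The depth one foliation $\mc F$ has compact leaf $S$, and an almost pseudo-Anosov flow $\varphi$ transverse to $\mc F$ is also transverse to $S$. So $c^u$ and $c^s$ are defined. By \Cref{lem:HMLaminations}, translated to $S$ as in the paragraph preceding the theorem, we have $h(c^+_\hm)\subseteq c^u$ and $c^-_\hm\subseteq c^s$ up to isotopy. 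Both $c^{u}$ and $c^{s}$ are multicurves, so each has diameter at most $1$ in $\C(S)$. This holds because the components of each are disjoint; they are nonparallel up to isotopy, or else we pass to isotopy classes.

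For the lower bound, I use monotonicity of diameter under inclusion. Since $c^-_\hm\cup h(c^+_\hm)\subseteq c^s\cup c^u$ as sets of vertices of $\C(S)$, we get
\[
d_S(c^-_\hm,h(c^+_\hm))=\diam(c^-_\hm\cup h(c^+_\hm))\le \diam(c^s\cup c^u)=d_S(c^u,c^s).
\]

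For the upper bound, take any $x\in c^s$ and $y\in c^u$. Pick $a\in c^-_\hm$ and $b\in h(c^+_\hm)$; both sets are nonempty, since the Handel--Miller laminations are nonempty when $N$ is not a product, and their frontier leaves produce closed leaves. Then
\[
d(x,y)\le d(x,a)+d(a,b)+d(b,y)\le 1+d_S(c^-_\hm,h(c^+_\hm))+1,
\]
because $x,a$ lie in the multicurve $c^s$ and $b,y$ lie in the multicurve $c^u$. Pairs of points lying in the same set, such as two points of $c^s$, are at distance at most $1$, which is also within the bound. Taking the supremum over pairs gives the $+2$.

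I expect no serious obstacle, since the substantive input is \Cref{lem:HMLaminations}. The points to check carefully are these:
\begin{enumerate}
\item the identifications of $\partial_\pm N$ with $S$, namely that $c^u$ lives on $\partial_+N$ and is transported by $h$ while $c^-_\hm$ lives on $\partial_-N$, which is identified with $S$ via the identity;
\item the nonemptiness of $c^\pm_\hm$, for which I would cite \cite[Theorem 6.5]{CCF19};
\item that $S$ is not a fiber, so that $c^{s/u}$ are nonempty multicurves.
\end{enumerate}
The last point holds because $S$ is a compact leaf of a depth one foliation whose other leaves are noncompact, so $S$ is not a cross section.
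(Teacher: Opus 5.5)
Your proposal is correct and is essentially the paper's argument. The paper likewise reads both inequalities off the containments $h(c^+_\hm)\subseteq c^u$ and $c^-_\hm\subseteq c^s$ from \Cref{lem:HMLaminations}, using monotonicity of $\diam$ and the fact that a multicurve has diameter at most $1$ in $\C(S)$.

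One small correction concerns your justification of point (3). It is not valid on its own: when $N$ is a product, $S$ can be a compact leaf of a depth one foliation with noncompact other leaves and still be a cross section of a suspension flow. The point is not needed separately, however. The mismatch hypothesis rules out the product case, so $c^\pm_\hm\neq\emptyset$, and the containments then force $c^u$ and $c^s$ to be nonempty.
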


Since $d_S(c_\hm^-, h(c_\hm^+))$ can be made as large as desired using tools from the coarse geometry of $\Mod(S)$, this completes the construction. In essentially the same way, one can also obtain similar control over the size of subsurface projections, as required by \Cref{thm:short-curves}.

\bibliographystyle{alpha}
\bibliography{f&p.bib}

\end{document}